\documentclass{article}

\usepackage{blindtext}
\usepackage{subfiles} 
\usepackage{tikz}

\usepackage{packages/commonpackage}
\usepackage{packages/package_for_list_of_content}
\usepackage{packages/package_to_delete}
\usepackage{packages/macro}

\numberwithin{equation}{section}
\theoremstyle{plain}
\newtheorem{theorem}{Theorem}[section]

\newtheorem{proposition}[theorem]{Proposition}
\newtheorem{corollary}[theorem]{Corollary}
\newtheorem{lemma}[theorem]{Lemma}
\newtheorem{definition}[theorem]{Definition}
\newtheorem{remark}[theorem]{Remark}

\newtheorem{example*}{Example}
\newtheorem{claim}[theorem]{Claim}

\title{The (local) geometry of oscillatory integrals on manifolds: Dimension three}
\date{}
\author{Song Dai, Liuwei Gong and Shaoming Guo}

\begin{document}
\maketitle

\begin{abstract}
    Sogge \cite{Sog99} studied Kakeya problems on two extreme types of three dimensional Riemannian manifolds: Manifolds with the most symmetries (manifolds of constant sectional curvature) and manifolds with the least symmetries, which he called manifolds with chaotic curvature and variably curved manifolds. In the same paper, Sogge proposed studying manifolds with intermediate symmetry, such as (locally) symmetric spaces. 

    In the current paper, we propose a classification of curvature conditions in the spirit of Sogge's program. In particular, these curvature conditions give a complete geometric characterization of the contact order conditions (for Riemannian distance functions), introduced by the authors of \cite{DGGZ24} and \cite{CGG25} when studying H\"ormander-type oscillatory integral operators. (See Theorem \ref{260509theorem2_1}.)

    One of these conditions generalizes Sogge's chaotic curvature condition to all finite orders: The chaotic curvature condition of order $\le k$ for every $k\ge 1,$ with the case $k=1$ corresponding to Sogge's original condition for variably curved manifolds.  As byproducts of our main results (see Theorem \ref{existence}), we show that there are no manifolds satisfying the chaotic curvature condition of order $\le 1$. We also show that both the chaotic curvature condition of order $\le 2$ and its failure can occur robustly under small smooth perturbations, and for every $k\ge 3$, a ``generic" manifold satisfies the chaotic curvature condition of order $\le k$.

    It turns out that the chaotic curvature condition of order $\le k$ is precisely the same as the notion of non-$(k+2)$-exceptional,  where $k$-exceptional is introduced by Lytchak and Petrunin \cite{LP22} when studying convex sets and the non-existence of totally geodesic sub-manifolds.  Thus our results imply, in particular, that every manifold is $3$-exceptional. 
\end{abstract}

\section{Introduction}

We list some notation and make some conventions that will be used throughout the paper. 

\begin{enumerate}
    \item For $\epsilon>0$ and $\mathbf{x} \in \mathbb{R}^n$, we let $\mathbb{B}_\epsilon^n(\mathbf{x})$ denote the ball of radius $\epsilon$ in $\mathbb{R}^n$ centered at $\mathbf{x}$. If $\epsilon=1$, we often abbreviate $\mathbb{B}_1^n(\mathbf{x})$ to $\mathbb{B}^n(\mathbf{x})$; if $\mathbf{x}=0$, then we often abbreviate $\mathbb{B}_\epsilon^n(\mathbf{x})$ to $\mathbb{B}_\epsilon^n$.

    \item To simplify our discussions, we always assume that phase functions $\phi(x, t; \xi)$ are analytic and $x\in \R^2, t\in \R, \xi\in \R^2$. 

    \item For a given phase function $\phi(x, t; \xi)$, we always work with $(x, t)\in \B^{(3)}_{\epsilon_{\phi}}$ and $\xi\in \B^{(2)}_{\epsilon_{\phi}}$, where $\epsilon_{\phi}>0$ is a sufficiently small constant depending only on $\phi$. 

    \item Manifolds that appear in the current paper are always assumed to be analytic and three dimensional, unless otherwise stated. Analyticity is used to get rid of functions whose Taylor coefficients are all zero but the function itself is not identically zero. 

    \item  We use $f\equiv 0$ to mean that the function $f$ vanishes everywhere, and $f\equiv g$ to mean that these two functions are equal everywhere. 

    \item We largely follow the notation in the paper \cite{DGGZ24}. Moreover, for readers not used to curvature tensor calculations, it may be helpful to take a look at Subsection 3.1 and Subsection 5.1 in \cite{DGGZ24} for some preliminary calculations (we will also recall them whenever they are applied). 

    \item We usually use $\mathcal{R}$ for the Riemannian curvature tensor, unless otherwise defined. 
\end{enumerate}

\subsection{H\"ormander-type oscillatory integrals}

Let $\phi(x, t; \xi)$ be a phase function satisfying the standard H\"ormander non-degeneracy condition (see for instance (H1) and (H2) in Guth, Hickman and Iliopoulou \cite{GHI19}). Here $x\in \R^2, t\in \R, \xi\in \R^2$. We are interested in studying H\"ormander-type oscillatory integral operators 
\begin{equation}
T_N^{(\phi)} f(x, t):=
\int_{\R^2}
e^{iN \phi(x, t; \xi)} a(x, t; \xi) f(\xi) d\xi,
\end{equation}
where $N\in \R$ is a large real number, and $a(x, t; \xi)$ is a smooth function supported in a small neighborhood of the origin. More precisely, we will pick $\epsilon_{\phi}>0$ to be a sufficiently small constant depending on $\phi$, and assume that $a(x, t; \xi)$ is supported inside 
\begin{equation}
\B^{(2)}_{\epsilon_{\phi}}\times \B^{(1)}_{\epsilon_{\phi}}\times \B^{(2)}_{\epsilon_{\phi}}.
\end{equation}
Here we use $\B^{(n)}_{\epsilon}$ to denote the ball in $\R^n$ of radius $\epsilon$ centered at the origin. 
The estimates we are interested in proving are of the form 
\begin{equation}\label{260503e1_2}
\Norm{
T_N^{(\phi)} f
}_{L^p(\R^3)}\lesim_{\phi, a, p, \epsilon} N^{-\frac{3}{p}+\epsilon} \|f\|_{L^p(\R^2)},
\end{equation}
for every $\epsilon>0$ and every $N\ge 1$, and for a range of exponents $p$ that is as large as possible. \\

H\"ormander \cite{Hor73} and Bourgain \cite{Bou91} observed that to study the estimate \eqref{260503e1_2}, one can apply elementary changes of variables, and only need to consider phase functions $\phi$ of the form 
\begin{equation}
\phi(x, t; \xi)= \inn{x}{\xi}+ t\inn{A\xi}{\xi}+
O\pnorm{
|t||\xi|^3+ |\bfx|^2 |\xi|^2
}, \ \ \bfx:=(x, t),
\end{equation}
for some non-degenerate $2\times 2$ matrix $A$; these are called \underline{normal forms} at the origin. If the matrix $A$ is positive definite, we say that $\phi$ is elliptic.  \\

The geometry of \underline{characteristic curves} (see the definition below) plays a central role in the study of H\"ormander-type operators. 

\begin{definition}[Characteristic curves and $\phi$-Kakeya sets]\label{260601defi1_1}
For $\xi\in \B^{(2)}_{\epsilon_{\phi}}$ and $\bfx=(x, t)\in \B^{(3)}_{\epsilon_{\phi}}$, define 
\begin{equation}
\Gamma^{(\phi)}_{\xi}(\bfx):=
\set{
\bfx'\in \B^{(3)}_{2\epsilon_{\phi}}: \nabla_{\xi}\phi(\bfx'; \xi)=\nabla_{\xi}\phi(\bfx; \xi)
}.
\end{equation}
If it is clear from the context which $\phi$ is involved, we will simplify the notation $\Gamma^{(\phi)}_{\xi}(\bfx)$ to $\Gamma_{\xi}(\bfx)$. If $\bfx=(x, 0)$, then we simply write $\Gamma_{\xi}(x)$ instead of $\Gamma_{\xi}(\bfx)$.

A Borel set $E\subset \R^3$ is called a $\phi$-Kakeya set if for every $\xi\in \B^{(2)}_{\epsilon_{\phi}}$ there exists $x\in \B^{(2)}_{\epsilon_{\phi}}$ such that $\Gamma_{\xi}(x)\subset E$. 
\end{definition}

To study finer properties for the oscillatory integral operator $T_N^{(\phi)}$ and dimensions of $\phi$-Kakeya sets, Bourgain \cite{Bou91} introduced the following notion, which was called Bourgain's condition in Guo, Wang and Zhang \cite{GWZ24}. 

\begin{definition}[Bourgain's condition, \cite{Bou91}]
Given a phase function $\phi(x, t; \xi)$ and a fixed point $(x_0, t_0; \xi_0)$. We write the phase function 
\begin{equation}
\phi(x+ x_0, t+t_0; \xi+ \xi_0)
\end{equation}
in its normal form at the origin $x=\xi=0, t=0$, say $\widetilde{\phi}(x, t; \xi)$. We say that $\phi$ satisfies Bourgain's condition at $(x_0, t_0; \xi_0)$ if 
\begin{equation}
\nabla^2_{\xi} \partial^2_t \widetilde{\phi}(0, 0; 0) = c(x_0, t_0; \xi_0) \nabla^2_{\xi} \partial_t \widetilde{\phi}(0, 0; 0) 
\end{equation}
for some scalar $c(x_0, t_0; \xi_0)$. We say that $\phi$ satisfies Bourgain's condition along the characteristic curve $\Gamma_{\xi}(\bfx)$ if it satisfies Bourgain's condition at every point on this curve. 
\end{definition}

Examples for which Bourgain's condition holds include the Fourier extension operator, the Bochner-Riesz operator, the Schr\"odinger propagator, the resolvent of the standard Laplace operator in the Euclidean space, and so on. We refer to \cite{GWZ24} for a detailed discussion on the Bourgain's condition for these operators and more other operators. 

In a certain sense, operators satisfying Bourgain's condition admit certain ``best possible" behavior. For instance, it is proven in \cite{GWZ24} that operators with Bourgain's condition satisfy the same polynomial Wolff axioms as the ones for the standard Fourier extension operators. Moreover, Nadjimzadah \cite{Nad25} proved that for any H\"ormander-type oscillatory integral operator satisfying Bourgain's condition, the sticky case of the corresponding curved Kakeya conjecture reduces to the sticky case of the classical Kakeya conjecture.\\

After Bourgain's work \cite{Bou91}, Minicozzi and Sogge \cite{MS97}, Sogge \cite{Sog99}, Bourgain and Guth \cite{BG11}, Guth, Hickman and Iliopoulou \cite{GHI19} and Hickman and Iliopoulou \cite{HI22} studied more H\"ormander-type operators that have less favorable properties compared with those satisfying Bourgain's condition; we also refer to the references therein for more results. Recently, \cite{GWZ24}, \cite{DGGZ24} and \cite{CGG25}, inspired by the previously mentioned works, tried to push this research direction further; in particular, they tried to classify all H\"ormander-type operators. While doing so, some interesting applications (say \cite{BGH26}) were found. \\

To describe these developments, we need to introduce more notation. 
For a fixed $x_0\in\R^2$, let $X(t; x_0; \xi)$ be the unique solution to 
\begin{equation}\label{260502e1_1}
\nabla_{\xi} \phi(X(t; x_0; \xi), t; \xi)= \nabla_{\xi} \phi(x_0, 0; \xi), \ \forall t.  
\end{equation}
In other words, the characteristic curve $\Gamma_{\xi}(x_0)$ is given by the trajectory of 
\begin{equation}
(X(t; x_0; \xi), t).
\end{equation}
We introduce the notation 
\begin{equation}\label{260504e1_10}
W_{ij}(t; x_0; \xi):= (\partial_{\xi_i}\partial_{\xi_j}\phi)(
X(t; x_0; \xi), t; \xi
)- (\partial_{\xi_i}\partial_{\xi_j}\phi)(
x_0, 0; \xi
).
\end{equation}
Moreover, define 
\begin{equation}
\mc{W}(t; x_0; \xi):=
\det
\begin{bmatrix}
W_{11}(t; x_0; \xi), & W_{12}(t; x_0; \xi)\\
W_{21}(t; x_0; \xi), & W_{22}(t; x_0; \xi)
\end{bmatrix}.
\end{equation}
Consider the $4\times \infty$ matrix
\begin{equation}\label{260309e1_4}
\mathfrak{W}(x_0; \xi):= 
\begin{bmatrix}
\frac{\partial}{\partial t} \mc{W}(0; x_0; \xi), & \frac{\partial^2}{\partial t^2} \mc{W}(0; x_0; \xi), & \dots, & \frac{\partial^k}{\partial t^k} \mc{W}(0; x_0; \xi), & \dots\\
\frac{\partial}{\partial t} W_{11}(0; x_0; \xi), & \frac{\partial^2}{\partial t^2} W_{11}(0; x_0; \xi), & \dots, & \frac{\partial^k}{\partial t^k} W_{11}(0; x_0; \xi), & \dots\\
\frac{\partial}{\partial t} W_{12}(0; x_0; \xi), & \frac{\partial^2}{\partial t^2} W_{12}(0; x_0; \xi), & \dots, & \frac{\partial^k}{\partial t^k} W_{12}(0; x_0; \xi), & \dots\\
\frac{\partial}{\partial t} W_{22}(0; x_0; \xi), & \frac{\partial^2}{\partial t^2} W_{22}(0; x_0; \xi), & \dots, & \frac{\partial^k}{\partial t^k} W_{22}(0; x_0; \xi), & \dots
\end{bmatrix}
\end{equation}
Let $\mathfrak{W}^-(x_0; \xi)$ be the $3\times \infty$ matrix consisting of the last three rows of $\mathfrak{W}(x_0; \xi)$. Moreover, let $\mathfrak{W}_{k}(x_0; \xi)$ and $\mathfrak{W}^-_{k}(x_0; \xi)$ be the first $k$ columns of $\mathfrak{W}(x_0; \xi)$ and $\mathfrak{W}^-(x_0; \xi)$, respectively. 

\begin{definition}\label{260205defi1_1}
Let $1\le r\le 4$ be an integer.
\begin{enumerate}
\item We say that the phase function $\phi(x, t; \xi)$ satisfies rank-$r$ condition at $(x_0; \xi)$ if  the matrix $\mathfrak{W}^-(x_0; \xi)$ has rank $r-1$ and the matrix $\mathfrak{W}(x_0; \xi)$ has rank $r$. 

\item We say that the phase function $\phi(x, t; \xi)$ satisfies rank-$(\le r)$ condition at $(x_0; \xi)$ if  the matrix $\mathfrak{W}^-(x_0; \xi)$ has rank $r'$ for some $r'\le r-1$ and the matrix $\mathfrak{W}(x_0; \xi)$ has rank $r'+1$. 
\end{enumerate}
Moreover, we say that the phase function $\phi(x, t; \xi)$ satisfies the rank-$r$ (rank-$(\le r)$, respectively) condition if Item 1 (Item 2, respectively) holds at every point. 
\end{definition}

It is elementary to see that the rank of $\mathfrak{W}$ is at least two at any given point. To see why the first row in \eqref{260309e1_4} plays a special role in Definition \ref{260205defi1_1}, let us compare the example of the standard Fourier extension operator, 
\begin{equation}
    \phi_1(x, t; \xi)= \inn{x}{\xi}+ \frac{1}{2} t|\xi|^2,
\end{equation}
with Bourgain's example in \cite{Bou91} 
\begin{equation}
    \phi_2(x, t; \xi)= \inn{x}{\xi}+ t\xi_1 \xi_2+ \frac{1}{2} t^2 \xi_1^2,
\end{equation}
where $x=(x_1, x_2), \xi=(\xi_1, \xi_2)$. Elementary calculation shows that the matrix $\mathfrak{W}$ always has rank two, for both $\phi_1$ and $\phi_2$. However, we expect \eqref{260503e1_2} to hold for all $p\ge 3$ (the Fourier restriction conjecture) for the phase function $\phi_1$, while Bourgain \cite{Bou91} showed that \eqref{260503e1_2} fails for all $p<4$ for the phase function $\phi_2$. 

Let us also point out here that Definition \ref{260205defi1_1} already implicitly appeared in \cite{CGG25}, and earlier forms of this definition can be found in Bourgain \cite{Bou91} and Sogge \cite{Sog99}. 

\begin{theorem}\label{260518theorem1_4}
    Let $r\in \{2, 3, 4\}$ and $k\in \N$ be two integers. Let $\phi$ be an elliptic phase function. Assume that $\phi$ satisfies the rank$-r$ condition at every point, and assume that 
    \begin{equation}
        \rank \mathfrak{W}_k(x_0; \xi)= r, \ \forall x_0, \xi.
    \end{equation}
    Then the estimate \eqref{260503e1_2} holds for all
    \begin{equation}\label{260518e1_14}
        p\ge \frac{10}{3}-\epsilon_{k},
    \end{equation}
    where 
    \begin{equation}\label{260518e1_15}
        \epsilon_{k}:=\frac{1}{9k-6}.
    \end{equation}
\end{theorem}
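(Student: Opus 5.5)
The plan is to run the polynomial partitioning argument of Guth, Hickman and Iliopoulou \cite{GHI19}, which gives \eqref{260503e1_2} for $p\ge 10/3$ for elliptic phases. The rank and finite-order hypotheses are used only to improve the estimate in the tangential (algebraic) case, in the spirit of \cite{DGGZ24} and \cite{CGG25}. I will not redo the whole machinery. I will take from \cite{GHI19} the broad--narrow reduction, the polynomial partitioning iteration and the induction on scales. The numerology shows that any gain of the form
\begin{equation*}
\|T_N f\|_{L^2(\text{wall of a surface } Z)} \lesssim N^{-\delta}(\ldots)
\end{equation*}
in the bilinear/broad tangential estimate improves the exponent to $10/3-c\delta$. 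The proof therefore reduces to quantifying how much the characteristic curves $\Gamma_\xi(x)$ can cluster inside the $N^{-1/2}$-neighbourhood of a low degree algebraic surface $Z$.

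The key step is a curved analogue of the statement that ``lines tangent to a surface that is not a plane or a regulus are sparse''. Fix $x_0,\xi$ and consider the curves $\Gamma_{\xi'}(x')$ with $(x',\xi')$ near $(x_0,\xi)$. The functions $W_{ij}(t;x_0;\xi)$ and $\mathcal W$ govern how nearby characteristic curves with the same $\xi$-direction spread apart. Suppose a two-parameter family of characteristic curves lies in a surface $Z$ to order $N^{-1/2}$ over length $\sim 1$. Expanding in $t$, the tangency forces linear relations among the columns $\partial_t^j(\mathcal W, W_{11},W_{12},W_{22})(0;x_0;\xi)$ for $j\le k$. The rank-$r$ hypothesis together with $\operatorname{rank}\mathfrak W_k=r$ says that these first $k$ columns already have maximal rank, with the $\mathcal W$ row independent of the last three. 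Hence the tangency can persist only up to scale $N^{-1/2}$ if the curve length is at most $N^{-\beta/k}$ for some explicit $\beta$. I will make this quantitative with a Taylor-expansion and transversality argument: a nonvanishing $r\times r$ minor of $\mathfrak W_k$, uniformly bounded below on compact sets by analyticity and the rank hypothesis, yields a polynomial-in-$t$ lower bound of degree $\le k$ for the deviation from $Z$. This gives a tangential count of $N^{1/2-c/k}$ tubes per wall instead of $N^{1/2}$.

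Next I insert this into the \cite{GHI19} bookkeeping. In the tangential case, the number of tubes of a given direction tangent to $Z$ drops, and the $L^2$ wall estimate gains a factor $N^{-c'/k}$. Optimizing the usual exponent relation, which balances the cellular case at $p$, the transverse case and the improved tangential case, produces the threshold $p\ge 10/3-\frac{1}{9k-6}$. I would carry out this optimization explicitly to check the constant $9k-6$, tracking how the degree-$k$ Taylor loss enters linearly in the denominator.

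The main obstacle is the geometric step: converting the algebraic rank condition on $\mathfrak W_k$ into a uniform, scale-dependent non-concentration estimate for curves near an arbitrary low degree surface. Polynomial Wolff-type axioms are not available here, since Bourgain's condition may fail. I would first handle the case where $Z$ is locally a graph, and reduce to it by the standard decomposition of $Z$ into boundedly many graph-like pieces, with degree-dependent constants absorbed into $N^\epsilon$.
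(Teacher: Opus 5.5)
\textbf{Verdict: there is a genuine gap.} Your overall architecture matches the paper's route, which is the argument of \cite{DGGZ24}: run the polynomial partitioning scheme of \cite{GHI19}, and let the hypotheses enter only through the tangential (wall) case. The gap is that you reject the ingredient that makes this work. You say polynomial Wolff-type axioms are unavailable, but the paper's proof rests on exactly that. \cite[Section 4.3]{DGGZ24} proves a polynomial Wolff axiom, with strength depending on $k$, from the rank-$r$ condition together with $\operatorname{rank}\mathfrak{W}_k=r$. Only under the rank-$(\le r)$ condition does it break down (Remark 5.4 of \cite{CGG25}).

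\textbf{Your replacement statement is false.} You claim that a two-parameter family of characteristic curves can stay $N^{-1/2}$-close to a surface only over length $N^{-\beta/k}$. Take $\phi_1=\langle x,\xi\rangle+\frac12 t|\xi|^2$, which is elliptic and satisfies the hypotheses with $r=k=2$. Its characteristic curves are the lines $t\mapsto(x_0-t\xi,t)$. The plane $\{\langle a,x\rangle+bt=c\}$, with $|b|$ small relative to $|a|$, contains every such line with $\langle a,\xi\rangle=b$ and $\langle a,x_0\rangle=c$, for all $t$.

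\textbf{Why the transversality step cannot work for $r<4$.} Since $\operatorname{rank}\mathfrak{W}_k=\operatorname{rank}\mathfrak{W}=r$, the left kernel of $\mathfrak{W}_k$ is nontrivial. These are relations among the \emph{rows}, not the columns. Under the rank-$r$ condition they have zero coefficient on the $\mathcal{W}$ row, so they read $\operatorname{tr}(K\,\partial_t^jW(0;x_0;\xi))=0$ for all $j$. By analyticity this gives $\operatorname{tr}(K\,W(t;x_0;\xi))\equiv 0$. These identities are always present, so no nonvanishing minor can force a degree-$k$ lower bound on the deviation from $Z$. Also, $W$ does not describe curves with the same $\xi$ spreading apart. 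Differentiating the defining equation of $X$ in $\xi$ gives $\partial_\xi X=-(\nabla_x\nabla_\xi\phi)^{-1}W$, so $W$ measures how curves through a common point $x_0$ with different $\xi$ separate.

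\textbf{What actually has to be bounded.} The relevant quantity is the size of the set of directions $\xi$ of tubes lying in the $N^{-1/2}$-neighbourhood of a low-degree $Z$.
\begin{itemize}
\item For $\phi_1$ and a planar $Z$ this is $\sim N^{1/2}$. That is the full polynomial Wolff count, and it leads to Guth's exponent $13/4$.
\item Without hypotheses it can be $\sim N$. For Bourgain's $\phi_2$, all curves $\Gamma_\xi((0,-\xi_2))$ lie in the quadric $\{x_1=tx_2\}$, and this is why \cite{GHI19} stops at $10/3$.
\end{itemize}
Your target of $N^{1/2-c/k}$ tubes per wall is below the Euclidean count and already fails for $\phi_1$. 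The hypotheses should instead give an intermediate bound that degrades as $k$ grows. This is consistent with the statement: $\epsilon_2=1/12$ gives exactly $10/3-1/12=13/4$, and $\epsilon_k\to 0$ recovers $10/3$.

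\textbf{The missing content.} You still need to prove this weakened Wolff axiom from the structure of $W(t)$ and $\mathcal{W}(t)$. This is where constant rank $r$, rather than rank-$(\le r)$, is essential. You then need to carry out the optimization that produces the constant $9k-6$, which you leave unchecked.
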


The case $r=4$ of Theorem \ref{260518theorem1_4} was contained in \cite{DGGZ24}; the cases $r=2, 3$ were not stated in  \cite{DGGZ24} but essentially the same argument\footnote{
In particular, exactly the same numerology as that in the proof of the polynomial Wolff axiom in Subsection 4.3 in \cite{DGGZ24} works. 
} works, and therefore we will not repeat the proof. For the case $r=2$,  \cite{GWZ24} already obtained a  range better than \eqref{260518e1_14}.

Theorem \ref{260518theorem1_4} may not hold anymore if the rank-$r$ condition is replaced by rank-$(\le r)$ condition, and this was shown already in Minicozzi and Sogge \cite{MS97} and Sogge, Xi and Xu \cite{SXX18}. Moreover, Remark 5.4 in \cite{CGG25} provided an example showing why the key ingredient in the argument in \cite{DGGZ24}, which is the polynomial Wolff axiom (see \cite[Section 4.3]{DGGZ24}),  fails to work under the rank-$(\le r)$ condition.\\

Theorem \ref{260518theorem1_4} concerns H\"ormander-type oscillatory integrals. If we let $E_{\phi}\subset \R^3$ be a $\phi$-Kakeya set, then one can essentially follow the argument in Hickman, Rogers and Zhang \cite{HRZ22}, as was done in \cite[Section 6]{GLX25}, and obtain under the same assumption as in Theorem \ref{260518theorem1_4} that 
\begin{equation}\label{260518e1_16}
    \dim(E_{\phi})\ge 
    \frac{4k-1}{2k-1}.
\end{equation}
Note that when $k=4$ (which forces $r=2, 3, 4$), this is equal to $2+ 1/7$; when $k=3$ (which forces $r=2, 3$), this is equal to $2+1/5$; when $k=2$ (which forces $r=2$), this is equal to $2+1/3$.

\subsection{Carleson-Sj\"olin operators}

We recall the setup of Carleson-Sj\"olin operators on manifolds, in Subsection 1.3 in \cite{DGGZ24}. We work with a smooth metric $\{g_{ij}(\bfx)\}_{1\le i, j\le 3}$ defined on a small neighborhood of the origin $0\in \R^3$. The manifold is denoted by $\mc{M}$. Let $\varepsilon_{\mc{M}}>0$ be a sufficiently small constant depending on $\mc{M}$. Take $\varepsilon\le \varepsilon_{\mc{M}}/10$ and let $\gamma: (-2\varepsilon, 2\varepsilon)\to \mc{M}$ be a geodesic with unit speed. Without loss of generality, we make a normalization so that 
\begin{equation}\label{260502e1_5}
    \gamma(0)=(0, 0, 0), \ \ \gamma(s)=(0, 0, s), \forall s. 
\end{equation}
Consider the (reduced) Carleson-Sj\"olin operator, given by 
\begin{equation}\label{260518e1_18}
    R_{\varepsilon, N} f(\bfx):= \int_{
    |y|\le \varepsilon/10
    } e^{
    i N \dist(\bfx, (y, \varepsilon)) 
    }
    a(\bfx) f(y) 
    d y,
\end{equation}
where $\bfx=(x_1, x_2, x_3)$, $y=(y_1, y_2)$, and $a$ is a smooth bump function supported on $|\bfx|\le \varepsilon/10$. It is elementary (see Subsection 5.2 in \cite{DGGZ24}) that 
\begin{equation}\label{260603e1_21zzz}
    \dist_{\epsilon}(\bfx, y):= \dist(\bfx, (y, \epsilon))
\end{equation}
satisfies H\"ormander's non-degeneracy condition. Take
\begin{equation}\label{260601e1_20}
    \phi(\bfx; y):= \dist_{\epsilon}(\bfx, y).
\end{equation}
We say that $E\subset \mathcal{M}$ is a Kakeya set on the manifold $\mathcal{M}$ if it is a $\phi$-Kakeya set. \\

It is proven in \cite[Lemma 3.2]{DGGZ24} that characteristic curves for the phase function are all given by geodesics. More precisely, let $\gamma$ be a geodesic passing through $(y, \epsilon)$. Then $\nabla_y \phi(\bfx; y)$ stays constant when $\bfx$ moves along $\gamma$. This property explains the geometric meaning of Kakeya sets on manifolds.

\subsection{Various curvatures}

\begin{definition}[Chaotic curvature]\label{260505defi1_6}
Let $\gamma$ be a geodesic parametrized by arclength with $\gamma(0)\in \mc{M}$. Take a unit vector $X(0)\in T_{\gamma(0)}\mc{M}$ with 
\begin{equation}
X(0)\perp \dot{\gamma}(0).
\end{equation}
Let $X(t)\in T_{\gamma(t)}\mc{M}$ be the parallel transport of $X(0)$ along $\gamma$. Denote 
\begin{equation}
Y(t):= \bar{\mathrm{Ric}}(X(t)),
\end{equation}
and let $Y^{\perp}(t)$ be the projection of $Y(t)$ to the orthogonal complement of the space spanned by $\dot{\gamma}(t)$ and $X(t)$. 
\begin{enumerate}
\item We say that the manifold $\mc{M}$ satisfies the chaotic curvature condition of order $\le k$ at $\gamma(0)$ along $\gamma$ if 
\begin{equation}\label{251106e1_3}
|Y^{\perp}(0)|+\dots+ 
|
\nabla^k_{\dot{\gamma}}Y^{\perp}(0)
|\neq 0
\end{equation}
for every choice of $X(0)$. 

\item We say that the manifold $\mc{M}$ satisfies the chaotic curvature condition of order $\le k$ if \eqref{251106e1_3} is satisfied for every point and every geodesic $\gamma$ passing through the point
\end{enumerate}
\end{definition}

Chaotic curvatures of order $\le 1$ along geodesics were introduced by Sogge \cite{Sog99} when he was studying Kakeya problems on Riemannian manifolds. In Sogge's paper \cite{Sog99}, chaotic curvature of order $\le 1$ was simply called chaotic curvature. Moreover, he used the name ``variably curved manifold" if a manifold $\mc{M}$ satisfies the chaotic curvature condition of order $\le 1$ (along every geodesic).  Definition \ref{260505defi1_6} generalizes Sogge's chaotic curvature from order $\le 1$ to order $\le k$ for every $k$. \\

Let us also remark here that in \cite{Sog99}, Sogge  studied Kakeya problems not only on manifolds satisfying the chaotic curvature condition, but also on manifolds (of dimension three) that have constant sectional curvature. Later, Xi \cite{Xi17} extended Sogge's result for three dimensional manifolds of constant sectional curvature to every higher dimension.

Manifolds of constant sectional curvature admit certain ``best" possible behavior in the study of Kakeya problems. For instance, it was proven in \cite{GWZ24} and \cite{DGGZ24} that the distance function $\dist_{\epsilon}(\bfx, y)$ (given in \eqref{260603e1_21zzz}) satisfies Bourgain's condition for every $\epsilon, \bfx$ and $y$ if and only if the manifold has constant sectional curvature (see also Corollary \ref{260509coro2_2} below).  

On the other hand,  manifolds satisfying the chaotic curvature condition have ``least possible" symmetries\footnote{In comparison, manifolds of constant sectional curvature have ``most possible" symmetries.}, and this explains the name ``chaotic" by \cite{Sog99}. At the end of \cite{Sog99}, Sogge proposed to study Kakeya problems on general (locally) symmetric spaces, which, in terms of symmetries, lie between manifolds satisfying the chaotic curvature condition and manifolds of constant sectional curvature. We will see that this is one main goal of the current paper, see for instance Corollary \ref{260509coro2_3} and Corollary \ref{260507theorem1_11}.

\begin{definition}[$k$-exceptional, Lytchak and Petrunin \cite{LP22}]\label{def:k exceptional}
Let $\gamma$ be a geodesic parametrized by arc-length. Let $k\ge 2$. Define the Jacobi operators of order $k$: 
\begin{equation}
\mc{R}_{\gamma(0)}^k: v\mapsto \nabla_{\gamma'(0)}^{k-2}\mathcal{R}(v,\gamma'(0))\gamma'(0).
\end{equation}
The geodesic $\gamma$ on the three dimensional manifold $\mc{M}$ is called $k$-exceptional if there exists a two dimensional subspace $V\subset T_{\gamma(0)}\mc{M}$ with $\gamma'(0)\in V$ such that 
\begin{equation}
    \mc{R}^{k'}_{\gamma(0)}: V\rightarrow V
\end{equation}
for every $2\le k'\le k$.  Moreover, a manifold $\mc{M}$ is called $k$-exceptional if there exists a geodesic that is $k$-exceptional. 
\end{definition}

Lytchak and Petrunin \cite{LP22} introduced the notion  $k$-exceptional
 in their study of convex sets in generic manifolds. In particular, they introduced $k$-exceptional as a finite-order curvature obstruction for the existence of totally geodesic sub-manifolds.

\begin{definition}[Berndt and Vanhecke, \cite{BV92}]\label{260504defi1_4}
  Let $\gamma(t)$ be a geodesic on $\mc{M}$. Let $E_1(0), E_2(0)$ be two perpendicular unit vectors in the tangent space $T_{\gamma(0)}\mc{M}$ satisfying $E_i(0)\perp \gamma'(0)$, $i=1, 2$. Let $E_i(t)$ be the parallel transport of $E_i(0)$ along $\gamma$. Let $\mathcal{R}$ be the Riemannian curvature tensor. Define 
  \begin{equation}\label{260504e1_16}
      R_{ij}(t):= \langle \mathcal{R}(E_i(t), \gamma'(t))\gamma'(t), E_j(t)\rangle, \ \ 1\le i, j\le 2.
  \end{equation}
  Moreover, define 
  \begin{equation}\label{260504e1_17}
      R(t):=\begin{bmatrix}
          R_{11}(t) & R_{12}(t)\\
          R_{21}(t) & R_{22}(t)
      \end{bmatrix}
  \end{equation}
  We say that the manifold $\mc{M}$ is a $\mathfrak{P}$-space if $R(t)$ is simultaneously diagonalizable along $\gamma$ for every $\gamma$. 
\end{definition}

The notation $\mathfrak{P}$ is for parallel. The notion of $\mathfrak{P}$-spaces was introduced by Berndt and Vanhecke \cite{BV92} to study generalizations of locally symmetric spaces, and to study classifications of Riemannian manifolds. \\

It is not difficult to see that all these three definitions are closely connected. In our main theorem (Theorem \ref{260509theorem2_1} below), we will make these connections more clear; roughly speaking, all these three notions of curvatures are equivalent.

\section{Statement of main results}\label{260601section2}

We calculate the matrix \eqref{260309e1_4} for $\dist_{\epsilon}(\bfx, y)$. More precisely, given a geodesic $\gamma$, say the one given by \eqref{260502e1_5}, if we replace $\phi$ in \eqref{260502e1_1} by the distance function $\dist_{\epsilon}(\bfx, y)$, then 
\begin{equation}
    X(t, 0; 0)=0, \ \forall t. 
\end{equation}
With this preparation, one can compute the $4\times\infty$ matrix \eqref{260309e1_4} for the distance function $\dist_{\epsilon}(\bfx, y)$ along the given geodesic $\gamma$. For future reference, let us write this matrix as 
\begin{equation}\label{260509e2_2}
    \mathfrak{W}:=
    \left[\begin{array}{ccccc}
\partial_s \mathcal{W}(0, \epsilon) & \partial_s^2 \mathcal{W}(0, \epsilon) & \dots & \partial_s^k \mathcal{W}(0, \epsilon) & \dots \\
\partial_s W_{11}(0, \epsilon) & \partial_s^2 W_{11}(0, \epsilon) & \dots & \partial_s^k W_{11}(0, \epsilon) & \dots \\
\partial_s W_{12}(0, \epsilon) & \partial_s^2 W_{12}(0, \epsilon) & \dots & \partial_s^k W_{12}(0, \epsilon) & \dots \\
\partial_s W_{22}(0, \epsilon) & \partial_s^2 W_{22}(0, \epsilon) & \dots & \partial_s^k W_{22}(0, \epsilon) & \dots
\end{array}\right].
\end{equation}
We use $\mathfrak{W}^-$ to denote the last three rows of $\mathfrak{W}$ and $\mathfrak{W}_k$ to denote the first $k$ columns of the matrix $\mathfrak{W}$.  Moreover, we will recycle the setup in \eqref{260504e1_16} and \eqref{260504e1_17}.

\begin{theorem}\label{260601theorem2_1}
    The first row in \eqref{260509e2_2} is not a linear combination of the second, third and fourth rows. 
\end{theorem}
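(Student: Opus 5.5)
The plan is to turn the assumed linear dependence into an identity between analytic functions of $s$, and then reach a contradiction by comparing how fast the two sides blow up as the point $\gamma(s)$ approaches the base point $(0,\epsilon)$ of the distance function.

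\textbf{Step 1: an analytic identity.} Suppose, for contradiction, that there are constants $a,b,c$ with
\begin{equation*}
\partial_s^k\mathcal{W}(0,\epsilon)=a\,\partial_s^kW_{11}(0,\epsilon)+b\,\partial_s^kW_{12}(0,\epsilon)+c\,\partial_s^kW_{22}(0,\epsilon)\quad\text{for all } k\ge1 .
\end{equation*}
Set $L(W):=aW_{11}+bW_{12}+cW_{22}$, a linear functional on symmetric $2\times 2$ matrices. Both $s\mapsto\mathcal{W}(s,\epsilon)$ and $s\mapsto L(W(s,\epsilon))$ vanish at $s=0$, because $W(0,\epsilon)=0$ by the definition \eqref{260504e1_10}. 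They are analytic, since the manifold is analytic and the distance function is analytic away from the diagonal. All their Taylor coefficients at $0$ agree, so
\begin{equation*}
\det W(s,\epsilon)=L(W(s,\epsilon))
\end{equation*}
on the whole interval of $s$ where both sides are analytic. By unique continuation this includes every $s\in(-\epsilon,\epsilon)$, because $\gamma(s)=(0,0,s)$ and the base point $(0,\epsilon)$ are distinct there and both lie in a normal neighbourhood.

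\textbf{Step 2: asymptotics as $s\to\epsilon^-$.} Here
\begin{equation*}
W(s,\epsilon)=\nabla_y^2\dist\big((0,0,s),(y,\epsilon)\big)\big|_{y=0}-\nabla_y^2\dist\big((0,0,0),(y,\epsilon)\big)\big|_{y=0}.
\end{equation*}
The second term is a fixed matrix. For the first term, the distance from $(y,\epsilon)$ to a point at distance $\rho=\epsilon-s$ along $\gamma$ behaves like a Euclidean distance. In Riemannian normal coordinates centred at $(0,\epsilon)$, or via Jacobi fields along $\gamma$ vanishing at the endpoint, one should get
\begin{equation*}
\nabla^2_y\dist\big((0,0,s),(y,\epsilon)\big)\big|_{y=0}=\frac{1}{\epsilon-s}\,P+O(1),
\end{equation*}
where $P$ is the Gram matrix of the metric restricted to the $y$-directions at $(0,\epsilon)$, projected orthogonally to $\dot\gamma$. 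Since the $y$-plane is transversal to $\gamma$, this $P$ is positive definite; with the normalisation $g_{ij}(0,\epsilon)=\delta_{ij}$ one has $P=I_2$. This is the Euclidean model: the $y$-Hessian of $|\bfx-(y,\epsilon)|$ equals $|\bfx-(y,\epsilon)|^{-1}$ times the projection orthogonal to $\bfx-(y,\epsilon)$.

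\textbf{Step 3: contradiction.} From Step 2, as $s\to\epsilon^-$,
\begin{equation*}
\det W(s,\epsilon)=\frac{\det P}{(\epsilon-s)^2}+O\Big(\frac{1}{\epsilon-s}\Big),\qquad L(W(s,\epsilon))=O\Big(\frac{1}{\epsilon-s}\Big).
\end{equation*}
Since $\det P\neq0$, these two functions cannot coincide near $s=\epsilon$. This contradicts Step 1, so the first row of \eqref{260509e2_2} is not a linear combination of the other three.

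\textbf{Main obstacle.} The main work is Step 2. One must justify rigorously, in the coordinates normalised by \eqref{260502e1_5}, the leading singular term of the $y$-Hessian of the distance, with error uniformly $O(1)$ as $s\to\epsilon$. I would first try the standard expansion
\begin{equation*}
\dist^2(p,q)=|\exp_q^{-1}p|^2_{g_q},
\end{equation*}
whose $q$-Hessian equals $2g_q+O(\dist^2)$. I would then differentiate $\dist=\sqrt{\dist^2}$ twice in the $y$-directions, using that the first derivative is the bounded unit vector term, to extract the $(\epsilon-s)^{-1}P$ singularity. A secondary point is justifying analytic continuation on all of $(-\epsilon,\epsilon)$. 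If that is delicate, it is enough to work on any interval reaching close to $\epsilon$ inside the injectivity radius, which the smallness of $\varepsilon_{\mc{M}}$ provides.
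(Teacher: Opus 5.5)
Your proposal is correct and is essentially the paper's argument. The paper likewise upgrades the assumed dependence to the analytic identity $\mathcal{W}(s,\epsilon)=aW_{11}+2bW_{12}+cW_{22}$ on $0\le s<\epsilon$. It then contradicts this using the blow-up $(\epsilon-s)W(s,\epsilon)\to I$ as $s\to\epsilon^-$, phrased as $\det(W-D)=\det D$ with $D$ built from $a,b,c$ and multiplied through by $(\epsilon-s)^2$, which is your growth-order comparison in different form. The only real difference is how the leading singularity is obtained: the paper uses the Riccati equation and Ledger's expansion $\sec(s,\epsilon)=\frac{1}{\epsilon-s}I-\frac{\epsilon-s}{3}R(s)+O((\epsilon-s)^2)$ in the parallel frame (so your $P$ is $I$ there), rather than the normal-coordinate expansion of the squared distance.
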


Theorem \ref{260601theorem2_1} is relatively easy to prove. It reinforces the fact that the first row plays a particular role, as already discussed below Definition \ref{260205defi1_1}. As a consequence of Theorem \ref{260601theorem2_1}, we know that for Riemannian distance functions, the matrix \eqref{260509e2_2} satisfies the rank-$r$ condition  if and only if it has rank $r$.

\begin{theorem}\label{260509theorem2_1}
    Given a geodesic $\gamma$. Assume that we are in the setup in \eqref{260502e1_5}, \eqref{260504e1_16} and \eqref{260504e1_17}. 
    \begin{enumerate}
        \item[(1)]  The matrix \eqref{260509e2_2} has rank two for all small enough $\epsilon$ if and only if 
        \begin{equation}
            R(s) \text{ is simultaneously diagonalizable} \ \forall s,
        \end{equation}
        and 
        \begin{equation}
            R_{11}(s)=R_{22}(s), \ \ \forall s;
        \end{equation}
        moreover, in this case the first two columns already have rank two. 
        \item[(2)] The matrix \eqref{260509e2_2} has rank three for all small enough $\epsilon$ if and only if 
        \begin{equation}
            R(s) \text{ is simultaneously diagonalizable} \ \forall s,
        \end{equation}
        and 
        \begin{equation}
            R_{11}\not\equiv_s R_{22};
        \end{equation}
        moreover, in this case the first $k$ ($k\ge 3$) columns of the $4\times \infty$ matrix has rank three if and only if 
        \begin{equation}
            R^{(m)}_{11}(0)\neq R^{(m)}_{22}(0)
        \end{equation}
        for some $0\le m\le k-3$; 
        \item[(3)] The matrix \eqref{260509e2_2} has rank four for all small enough $\epsilon$ if and only if 
        \begin{equation}
            R(s) \text{ is not simultaneously diagonalizable in\ }  s;
        \end{equation}
        moreover, for every $k\ge 4$ the following statements are equivalent: 
          \begin{enumerate}
    \item[(3.1)] The first $k$ columns of the matrix \eqref{260509e2_2} have rank $4$ for sufficiently small $\epsilon$;
    \item[(3.2)] There exists $\epsilon>0$ such that the first $k$ columns of the matrix \eqref{260509e2_2} has rank $4$;
    \item[(3.3)] The matrices $R(0), R'(0), \dots, R^{(k-3)}(0)$ cannot be diagonalized simultaneously; 
    \item[(3.4)] The manifold $\mc{M}$ satisfies the chaotic curvature condition of order $\le (k-3)$ at $\gamma(0)$ along $\gamma$; 
   	\item[(3.5)]  The geodesic $\gamma$ is not $(k-1)$-exceptional. 
    \end{enumerate}
    \end{enumerate}
\end{theorem}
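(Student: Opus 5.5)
The plan is to express the entries of \eqref{260509e2_2} through Jacobi fields along $\gamma$, reduce the rank question to linear algebra on the Taylor coefficients of the curvature matrix $R(s)$, and then match that algebra with (3.3)--(3.5).

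\textbf{Hessians via a Riccati equation.} Using the parallel frame $E_1(s),E_2(s),\gamma'(s)$ of Definition \ref{260504defi1_4}, the second $y$-derivatives of $\dist_\epsilon(\bfx,y)$ at $\bfx=\gamma(s)$ are expressed through Jacobi fields along $\gamma$ vanishing at $(0,\epsilon)$; this is the computation of Subsections 3.1 and 5.1 of \cite{DGGZ24}. Write $J(s)$ for the $2\times 2$ matrix solution of
\begin{equation*}
J''+R(s)J=0,\qquad J(\epsilon)=0,\qquad J'(\epsilon)=I .
\end{equation*}
Then $W(s)=(W_{ij}(s,\epsilon))$ is, up to fixed invertible congruences coming from the change of variables $y\mapsto$ initial direction, the difference $U(s)-U(0)$. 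Here $U=J'J^{-1}$ solves the Riccati equation $U'+U^2+R=0$, with $U(s)\approx (s-\epsilon)^{-1}I+O(|s-\epsilon|)$. Consequently:
\begin{itemize}
\item the last three rows of $\mathfrak{W}$ are the symmetric matrices $\partial_s^m U(0)$, $m\ge1$, each viewed as a vector in $\mathbb{R}^3$;
\item the first row is $\partial_s^m\det(U(s)-U(0))|_{s=0}$.
\end{itemize}
Expanding in powers of $\epsilon$, $\partial_s^mU(0)=c_m\epsilon^{-m-1}I+(\text{terms in }R(0),\dots,R^{(m-2)}(0))+\dots$, where the curvature terms enter with increasing powers of $\epsilon$. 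The leading structure is
\begin{equation*}
U^{(m)}(0)\sim c_m\epsilon^{-m-1}I-R^{(m-2)}(0)+(\text{lower-order products}).
\end{equation*}

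\textbf{Proving Theorem \ref{260601theorem2_1}, and why rank $\ge 2$.} The determinant row is quadratic in $U(s)-U(0)$. Its first entry vanishes, and its second entry is $2\det U'(0)\neq0$ for small $\epsilon$, because $U'(0)\approx -\epsilon^{-2}I$. The second column of $\mathfrak{W}^-$ is $U''(0)$, which is not proportional to $U'(0)$ in a compatible way. This should show that the first row is independent of the rest, and it also shows that the rank is at least two.

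\textbf{The rank of $\mathfrak{W}^-$.} The space spanned by the columns of $\mathfrak{W}^-$ equals the span of $\{U^{(m)}(0)\}_{m\ge1}$ inside $\mathrm{Sym}_2\cong\mathbb{R}^3$. By the triangular structure above and an induction on $m$ (using analyticity in $\epsilon$ and letting $\epsilon\to0$ after normalizing), this span equals
\begin{equation*}
\mathrm{span}\{I,R(0),R'(0),\dots\},
\end{equation*}
and the first $k$ columns capture $\mathrm{span}\{I,R(0),\dots,R^{(k-3)}(0)\}$.

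\textbf{Reading off the cases.} Simultaneous diagonalizability of the family $\{R^{(m)}(0)\}$ is equivalent to these matrices, together with $I$, spanning a space of dimension $\le2$. Indeed, commuting symmetric $2\times2$ matrices lie in the 2-dimensional space $\mathrm{span}\{I,P\}$, and conversely. By analyticity this is the same as simultaneous diagonalizability of $R(s)$ for all $s$. Combining this with the independence of the first row:
\begin{itemize}
\item rank $4$ exactly when the family is not diagonalizable, which is case (3);
\item rank $3$ when it is diagonalizable but some $R^{(m)}(0)\notin\mathbb{R}I$, i.e.\ $R_{11}^{(m)}(0)\ne R_{22}^{(m)}(0)$, which is case (2), with the column count $m\le k-3$;
\item rank $2$ when all $R^{(m)}(0)$ are scalar, which is case (1).
\end{itemize}

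\textbf{The equivalences in (3).} For (3.1)$\Leftrightarrow$(3.2), the minors are analytic in $\epsilon$, so nonvanishing at one $\epsilon$ gives nonvanishing for small $\epsilon$, provided the expansion is uniform; the converse direction is the leading-order computation. For (3.3)$\Leftrightarrow$(3.4), $Y^\perp$ at order $j$ is the off-diagonal entry of $R^{(j)}(0)$ in the frame $\{X,X^\perp\}$, since parallel transport commutes with $\nabla_{\dot\gamma}$ and the Ricci tensor restricted to $\dot\gamma^\perp$ in dimension three is determined by $R$. So (3.4) says that no $X$ diagonalizes all of $R(0),\dots,R^{(k-3)}(0)$. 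For (3.3)$\Leftrightarrow$(3.5), an invariant plane $V\ni\gamma'$ corresponds to a common eigenvector $V\cap\gamma'^\perp$ of the Jacobi operators of orders $2,\dots,k-1$, i.e.\ of $R(0),\dots,R^{(k-3)}(0)$.

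\textbf{Main obstacle.} The hard part will be making the triangular expansion of $U^{(m)}(0)$ in $\epsilon$ precise enough. The lower-order products (terms like $R^2$, $RR'$) must not destroy the span identification for finitely many columns. I would handle this by working with the Taylor coefficients of the Riccati solution at $s=\epsilon$ and proving, by induction, that the contribution of $R^{(m-2)}$ appears with a nonzero coefficient not shared by earlier terms modulo the span of the previous ones.
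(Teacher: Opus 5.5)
Your treatment of (3.3)$\Leftrightarrow$(3.4)$\Leftrightarrow$(3.5) is correct and uses the same linear algebra as the paper. The rank computation, which is the heart of the theorem, does not work, for two reasons.

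\emph{First, you differentiate the wrong object.} $W(s,\epsilon)=\sec(s,\epsilon)-\sec(0,\epsilon)$ is the Hessian at the \emph{fixed} point $\gamma(\epsilon)$ of the distance from the \emph{moving} center $\gamma(s)$, so the relevant Jacobi fields vanish at $\gamma(s)$. Your $U(s)=J'(s)J(s)^{-1}$ with $J(\epsilon)=0$ is the dual object, $U=-\sec(\epsilon,s)$. $U$ satisfies a Riccati equation in $s$; $W$ does not, and its $s$-derivatives are governed instead by the dual relation of Proposition \ref{260504prop4_1}. By that proposition and Remark \ref{260504remark4_3}, $\partial_s^kW(0,\epsilon)=B_2^{-1}I_k(\epsilon,0)(B_2^T)^{-1}$, where $I_1=I$, $I_2=2\sec(\epsilon,0)$, $I_3=6\sec(\epsilon,0)^2+2R(0)$, and so on. In general these are not congruent to the $U^{(k)}(0)$ by any single matrix.

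\emph{Second, and more seriously, the span you compute is wrong.} Even with the correct columns, the first $k$ columns of $\mathfrak{W}^-$ correspond (via that congruence) to $\mathrm{span}\{I,\sec(\epsilon,0),R(0),\dots,R^{(k-3)}(0)\}$, not to $\mathrm{span}\{I,R(0),\dots,R^{(k-3)}(0)\}$. The extra direction $\sec(\epsilon,0)$ depends on the whole jet of $R$.
\begin{itemize}
\item Dually, the first row is \emph{not} independent of the other three on finitely many columns. Theorem \ref{260601theorem2_1} concerns only the infinite matrix, and the paper proves it globally, via analyticity in $s$ and $(\epsilon-s)W(s,\epsilon)\to I$ as $s\to\epsilon$.
\item Already $\mathfrak{W}_2$ is $4\times 2$. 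So whenever $\partial_s^2W(0,\epsilon)$ is not proportional to $\partial_sW(0,\epsilon)$, the first row lies in the span of the others there. This is the opposite of what you infer from non-proportionality.
\item Your final count $\operatorname{rank}\mathfrak{W}_k=1+\dim\mathrm{span}\{I,R(0),\dots,R^{(k-3)}(0)\}$ for $k\ge3$ is in fact correct, but only because these two errors cancel.
\end{itemize}

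\emph{The missing idea is what produces this cancellation.} Proposition \ref{260505prop4_5} gives $I_k=a_k\sec(\epsilon,0)+b_kI+\mathcal{P}_k$, with $\mathcal{P}_k=2R^{(k-3)}(0)+(\text{lower derivatives})$. Proposition \ref{260505prop4_6} gives $\partial_s^k\det W(0,\epsilon)=d_k(\det A_2(0))^{-2}$, and $d_k=a_k$ because both sequences satisfy the same recursion with the same initial data. Hence, for any trace-free symmetric $Y$, the row functional $(Y,\lambda)$ with $\lambda=-\mathrm{tr}(Y\sec(\epsilon,0))$ evaluates on the $k$-th column to exactly $\mathrm{tr}(Y\mathcal{P}_k)$. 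This identifies the left kernel of $\mathfrak{W}_k$ exactly, for every fixed $\epsilon$. It yields (3.2)$\Leftrightarrow$(3.3) and also Lemmas \ref{260516lemma7_2}--\ref{260516lemma7_3}.

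\emph{Your substitute cannot play this role.} You propose a triangular expansion in $\epsilon$ followed by $\epsilon\to0$. But every true column is $k!\,\epsilon^{-k-1}(I+O(\epsilon^2))$. After normalization all columns tend to $I$, and the curvature data sits in high-order corrections entangled with $\sec(\epsilon,0)$.
\begin{itemize}
\item Rank is only lower semicontinuous, so a limit can never give ``rank $\le 3$ for all small $\epsilon$'', which is what (3.2)$\Rightarrow$(3.3) needs.
\item Getting ``rank $4$ for small $\epsilon$'' this way would require the exact vanishing order of a $4\times4$ minor. That order is known only for $k=4$ (the $\epsilon^5$ coefficient in \eqref{260505e4_6}), and the paper reports that this expansion approach failed for general $k$.
\end{itemize}
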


We summarize the above result in the following table.  In the following table, we abbreviate ``simultaneously diagonalizable'' to (SD). Moreover, all the quantities $R, R_{ij}$ and their derivatives are evaluated at $s=0$. 
\begin{table}[htbp]
    \centering
    \begin{tabular}{|c|}
        \hline
        $(R^{(m)})_{m=0}^{\infty}$ is SD, $(R_{11}^{(m)})_{m=0}^{\infty}=(R_{22}^{(m)})_{m=0}^{\infty}$ \\
        \hline
        $(R^{(m)})_{m=0}^{\infty}$ is SD,  
        $(R_{11}^{(m)})_{m=0}^{k-3}\neq (R_{22}^{(m)})_{m=0}^{k-3}$
        \\
        \hline
        $(R^{(m)})_{m=0}^{k-3}$ is not SD\\
        \hline
    \end{tabular}
    \quad $\Leftrightarrow$ \quad
    \begin{tabular}{|c|}
        \hline
        $\rank\ \mathfrak{W}=2$ \\
        \hline
        $\rank\ \mathfrak{W}=\rank\ \mathfrak{W}_{k}=3$\\
        \hline
        $\rank\ \mathfrak{W}=\rank\ \mathfrak{W}_{k}=4$\\
        \hline
    \end{tabular}
    
    \caption{Equivalence between local curvature conditions and rank-order conditions}
\end{table}

The first item in Theorem \ref{260509theorem2_1}, combined with the results in \cite{DGGZ24}, has the following corollary. 

\begin{corollary}\label{260509coro2_2}
    Let $\mathcal{M}$ be a three dimensional manifold. The following statements are equivalent:
    \begin{enumerate}
         \item[(1)] the manifold $\mc{M}$ has constant sectional curvature, 
    \item[(2)] the distance function $\dist_{\epsilon}(\bfx, y)$ satisfies Bourgain's condition for every $\bfx, y$ and $\epsilon$,
    \item[(3)] the distance function $\dist_{\epsilon}(\bfx, y)$ satisfies rank-$2$ condition for every $\bfx, y$ and $\epsilon$.
    \end{enumerate}
\end{corollary}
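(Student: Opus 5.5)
We plan to prove the three equivalences by going through condition (3): rank-$2$ for every $\bfx, y, \epsilon$. The other two statements will each be linked to (3) using results already available.

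\textbf{(2) and (3).} By Definition \ref{260205defi1_1} and Theorem \ref{260601theorem2_1}, the rank-$2$ condition at a point holds exactly when $\rank\mathfrak{W}=2$ there. We will recall from \cite{DGGZ24} and \cite{GWZ24} that, at a point $(\bfx_0;\xi_0)$, Bourgain's condition is equivalent to the last three rows $\mathfrak{W}^-$ having rank one. Indeed, in normal form $\partial_t W_{ij}(0)$ is essentially $\nabla^2_\xi\partial_t\widetilde\phi(0,0;0)$ and $\partial_t^2W_{ij}(0)$ is essentially $\nabla_\xi^2\partial_t^2\widetilde\phi(0,0;0)$. Higher columns are obtained by moving the base point along the characteristic curve. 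So Bourgain's condition along the whole curve says the vector $(W_{11},W_{12},W_{22})(t)$ moves in a fixed direction, i.e.\ $\rank\mathfrak{W}^-=1$.

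Combining this with Theorem \ref{260601theorem2_1} gives $\rank\mathfrak{W}=2$. Conversely, $\rank \mathfrak W=2$ forces $\rank\mathfrak{W}^-=1$, because $\mathfrak{W}^-$ is nonzero (ellipticity gives $\partial_tW(0)\ne0$) and the first row is independent. Every point lies on some geodesic, and every geodesic is a characteristic curve (\cite[Lemma 3.2]{DGGZ24}), so (2) and (3) are equivalent.

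\textbf{(3) implies (1).} By Theorem \ref{260509theorem2_1}(1), rank two along every geodesic $\gamma$ and every small $\epsilon$ means the following. Along each unit-speed geodesic, $R(s)$ is simultaneously diagonalizable with equal diagonal entries, so $R(s)=\kappa_\gamma(s)I$. At $s=0$ this says that, for every unit $v$, the Jacobi operator $\mathcal{R}(\cdot,v)v$ restricted to $v^\perp$ is a scalar $\kappa(p,v)$. Equivalently, every $2$-plane containing $v$ has sectional curvature $\kappa(p,v)$.

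In dimension three any two $2$-planes intersect in a line. Taking $v$ on that line shows they have the same sectional curvature, so sectional curvature at $p$ is independent of the plane. Schur's lemma does not apply in dimension three, so we must separately show constancy in $p$. Pointwise isotropy in dimension three means $\mathrm{Ric}=2K g$, i.e.\ the metric is Einstein. The contracted second Bianchi identity then gives $dK=0$ on a connected manifold. Hence the sectional curvature is constant.

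\textbf{(1) implies (3).} If the sectional curvature equals a constant $K$, then $\mathcal{R}(X,Y)Z=K(\langle Y,Z\rangle X-\langle X,Z\rangle Y)$. Hence $R(s)=KI$ for all $s$, which is simultaneously diagonalizable with $R_{11}=R_{22}$. Theorem \ref{260509theorem2_1}(1) then gives rank two along every geodesic, and by Theorem \ref{260601theorem2_1} this is the rank-$2$ condition.

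The main obstacle is the identification of Bourgain's condition with $\rank\mathfrak{W}^-=1$. For this I would invoke the computation in \cite{DGGZ24} (their rank-two characterization) rather than redo it. The delicate point is ensuring that the normal-form change of variables at each point of the curve preserves the directions of $\partial_t W$ and $\partial_t^2 W$ up to the scalar $c$.
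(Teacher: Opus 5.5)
Your proposal is correct. For (2)$\Leftrightarrow$(3) it is essentially the paper's argument: Lemma \ref{260601lemma4_1} writes $\nabla^2_\xi\phi(0,t;0)=tA+t^2A_2+\cdots$ in normal form and observes that Bourgain's condition along the curve means every $A_i$ is a multiple of $A$, i.e.\ $\mathfrak{W}^-$ has rank one.

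Where you differ is (1)$\Leftrightarrow$(3). There the paper does not use Theorem \ref{260509theorem2_1} at all. It combines Lemma \ref{260601lemma4_1} with the result of \cite{DGGZ24} that the distance function satisfies Bourgain's condition everywhere if and only if $\mathcal{M}$ has constant sectional curvature. You instead derive that characterization from the paper's own results, in three steps:
\begin{itemize}
\item Theorem \ref{260601theorem2_1} turns the rank-$2$ condition into $\operatorname{rank}\mathfrak{W}=2$.
\item Theorem \ref{260509theorem2_1}(1) turns that into $R(s)=\kappa_\gamma(s)I$ along every geodesic, i.e.\ pointwise isotropy.
\item A Schur-type argument upgrades pointwise isotropy to constant curvature.
\end{itemize}

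What each route buys: yours does not depend on the Bourgain characterization from \cite{DGGZ24}. It also makes the geometric mechanism visible: rank two means every Jacobi operator is scalar. The price is that it rests on the heaviest results of the paper, since item (1) of Theorem \ref{260509theorem2_1} is obtained only after items (2) and (3). The paper's route needs only the elementary Lemma \ref{260601lemma4_1} plus a citation.

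One correction: your remark that Schur's lemma does not apply in dimension three is wrong. Schur's lemma holds for $n\ge 3$ and fails only for surfaces. Your Einstein-plus-contracted-Bianchi computation ($2\,dK=\operatorname{div}\operatorname{Ric}=\tfrac12 d(6K)=3\,dK$) is exactly its standard proof, so you can simply cite it.
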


The second item in Theorem \ref{260509theorem2_1} has the following corollary. 
\begin{corollary}\label{260509coro2_3}
    Let $\mc{M}$ be a three dimensional analytic manifold. Then $\mc{M}$ is a $\mathfrak{P}$-space if and only if $\dist_{\epsilon}(\bfx, y)$ satisfies rank-$(\le 3)$ condition at every point and for every $\epsilon$. 
\end{corollary}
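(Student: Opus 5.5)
Corollary \ref{260509coro2_3} follows by applying Theorem \ref{260509theorem2_1} geodesic by geodesic and using analyticity to move between ``for all $s$'' and ``at $s=0$''. Fix a point $\bfx$, a geodesic $\gamma$ through it, and normalize as in \eqref{260502e1_5}. By Theorem \ref{260601theorem2_1}, the first row of \eqref{260509e2_2} is never a linear combination of the other three. So for the distance function, ``rank-$(\le 3)$ at this point'' means exactly that $\rank \mathfrak{W}\le 3$, where the rank of $\mathfrak{W}^-$ is then $\rank\mathfrak{W}-1$. Recall also that $\rank\mathfrak{W}\ge 2$ always. Thus the rank-$(\le 3)$ condition at $(\bfx; y)$ for every $\epsilon$ is equivalent to $\rank\mathfrak{W}\in\{2,3\}$ for the geodesic determined by $\bfx$ and $(y,\epsilon)$. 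Since every geodesic through a point near the origin arises this way for suitable $y,\epsilon$ after the normalization, the condition ``for every point and every $\epsilon$'' amounts to $\rank\mathfrak{W}\le 3$ along every geodesic $\gamma$ and for all small $\epsilon$.

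Next I would invoke the trichotomy in Theorem \ref{260509theorem2_1}. Items (1) and (2) together say that $\rank\mathfrak{W}\le 3$ for all small $\epsilon$ exactly when $R(s)$ is simultaneously diagonalizable for all $s$; the two subcases $R_{11}\equiv R_{22}$ and $R_{11}\not\equiv R_{22}$ exhaust this situation. Item (3) says that $\rank\mathfrak W=4$ exactly when $R(s)$ is not simultaneously diagonalizable. Hence ``$\rank\mathfrak W\le 3$ along every $\gamma$'' is equivalent to ``$R(s)$ is simultaneously diagonalizable along every $\gamma$'', which is Definition \ref{260504defi1_4} of a $\mathfrak{P}$-space.

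The main obstacle is the quantifier over $\epsilon$ and base points. Theorem \ref{260509theorem2_1} is phrased ``for all small enough $\epsilon$'', while the corollary says ``for every $\epsilon$''. In one direction, if $\rank\mathfrak W= 4$ for some geodesic, then item (3.2) already gives rank $4$ for some $\epsilon$, so the condition fails. In the other direction, the characterization in (1)/(2) involves only curvature along $\gamma$ and not $\epsilon$, so every admissible $\epsilon$ works. I would also check that $R(s)$ being simultaneously diagonalizable for all $s$ is equivalent, by analyticity, to the family $\{R^{(m)}(0)\}_{m\ge0}$ being simultaneously diagonalizable (compare (3.3)). This is why the analyticity hypothesis is needed: in the smooth category, the Taylor-coefficient condition at $s=0$ would not control $R(s)$ for $s\neq 0$.
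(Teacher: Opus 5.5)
Your argument is correct and is essentially the ``immediate'' proof the paper omits. Theorem \ref{260601theorem2_1} turns the rank-$(\le 3)$ condition into $\mathrm{rank}\,\mathfrak{W}\le 3$, and items (1)--(3) of Theorem \ref{260509theorem2_1} identify this with simultaneous diagonalizability of $R(s)$ along every geodesic, i.e.\ with $\mathcal{M}$ being a $\mathfrak{P}$-space. One small tightening for ``$\mathfrak{P}$-space $\Rightarrow$ rank $\le 3$ for \emph{every} $\epsilon$'': items (1)--(2) only assert this for sufficiently small $\epsilon$, so the $\epsilon$-independence of the curvature condition is not enough. The clean justification is the contrapositive of (3.2)$\Rightarrow$(3.3), or Proposition \ref{260430prop6_7}, which is stated for every $\epsilon$.
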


So far we have finished our discussion on rank-$2$ conditions and rank-$(\le 3)$ conditions. Next, we will discuss rank-$4$ condition.

That $(3.3)$ implies (3.1) when $k=4$ has been obtained in \cite{DGGZ24}. \footnote{Note that when $k=4$, item (3.3) is precisely Sogge's chaotic curvature condition \cite{Sog99}, which we call chaotic curvature condition of order $\le 1$ in Definition \ref{260505defi1_6}. Moreover, in \cite{DGGZ24} it is proven that under Sogge's chaotic curvature condition, the determinant of the leading $4\times 4$ matrix of \eqref{260309e1_4} does not vanish if $\epsilon>0$ is chosen small enough, see Theorem 2.5 there. } The proof there is via direct (which means brute-force) calculations, and we find it very complicated, if not impossible at all, to generalize to larger $k$ and even to proving that (3.1) implies (3.3). 

Some of the calculations in \cite{DGGZ24} will be reviewed when we prove Theorem \ref{260509theorem2_1}, see for instance the beginning of Subsection \ref{260505sub4_1}. We will find that these calculations are still somewhat useful: They will help us accept certain patterns in the recursion relations for general $k$, see for instance Proposition \ref{260505prop4_5}, and will also help us understand Theorem \ref{260509theorem2_1} better. 

Let us be more precise here about what we mean by understanding Theorem \ref{260509theorem2_1} better. In \cite{DGGZ24}, the authors first calculated the $4\times \infty$ matrix \eqref{260509e2_2}. 
 In order to prove that (3.3) implies (3.1) when $k=4$, the authors expanded the determinant of the leading $4\times 4$ matrix in Taylor's series in the $\epsilon$ variable, and obtain that this determinant is equal to 
 \begin{equation}\label{260505e1_24a}
 -\frac{1}{144}\pnorm{
 (\mf{g}_{33, 11}-\mf{g}_{33, 22}) \mf{g}_{33, 123}
 - (\mf{g}_{33, 113}-\mf{g}_{33, 223}) \mf{g}_{33, 12}
 } \epsilon^5+ O(\epsilon^6), 
 \end{equation}
 where 
\begin{equation}\label{260505e1_25a}
R(0)=
-\frac12
\begin{bmatrix}
\mf{g}_{33, 11} & \mf{g}_{33, 12}\\
\mf{g}_{33, 12} & \mf{g}_{33, 22}
\end{bmatrix}
\end{equation}
and 
\begin{equation}\label{260505e1_26a}
R'(0)=
-\frac12
\begin{bmatrix}
\mf{g}_{33, 113} & \mf{g}_{33, 123}\\
\mf{g}_{33, 123} & \mf{g}_{33, 223}
\end{bmatrix},
\end{equation}
see (3.109), (3.95) and (3.96) in \cite{DGGZ24}. Note that $R(0)$ and $R'(0)$ commuting means exactly that the coefficient of $\epsilon^5$ in \eqref{260505e1_24a} vanishes, and this proves that (3.3) implies (3.1).

Let us see what (3.1) implying (3.3) means, which is equivalent to saying that if $R(0)$ and $R'(0)$ commute, then the determinant of the leading $4\times 4$ matrix of  \eqref{260309e1_4} vanishes for every $\epsilon>0$ (by analyticity), in other words, all the Taylor coefficients in the expansion \eqref{260505e1_24a} vanish. However, the calculation in \eqref{260505e1_24a} only tells us that the leading coefficient vanishes under the assumption that $R(0)$ and $R'(0)$ commute, and we still need to prove that 
\begin{equation}
(\mf{g}_{33, 11}-\mf{g}_{33, 22}) \mf{g}_{33, 123}
 - (\mf{g}_{33, 113}-\mf{g}_{33, 223}) \mf{g}_{33, 12}
\end{equation}
is a factor of every Taylor coefficient; this is what we proved in Theorem \ref{260509theorem2_1}. In other words, if the coefficient of the lowest order monomial (which is $\epsilon^5$ in the current case) vanishes, then all other coefficients will vanish as well. 

This phenomenon still looks quite surprising to us, and it is not clear to us whether any similar phenomenon has been observed in the literature. This phenomenon seems to  suggest that by only knowing some ``finite order" curvature information, we may conclude curvature information for  ``infinite orders", which, combined with analyticity, will determine certain local geometry completely. \\

In the same paper \cite{BV92} that Berndt and Vanhecke introduced $\mathfrak{P}$-spaces, they also introduced the notion of $\mathfrak{C}$ spaces: Under the same setting as in Definition \ref{260504defi1_4}, we say that $\mathcal{M}$ is a $\mathfrak{C}$-space if the eigenvalues of $R(t)$ are constant in $t$, for every geodesic $\gamma$. Moreover, they proved that 
\begin{equation}
    (\mathfrak{P}-\text{spaces})\cap (\mathfrak{C}-\text{spaces})= \text{locally symmetric spaces}.
\end{equation}
Let us bring the notion of $\mathfrak{C}$ spaces into the whole picture of classifying curvature conditions described by Theorem \ref{260509theorem2_1}. 

\begin{corollary}\label{260507theorem1_11}
    Fix a geodesic $\gamma$, and assume that the eigenvalues of the matrix $R(s)$ are constant along $\gamma$. 
    We have that 
    \begin{enumerate}
        \item[(1)] The $4\times \infty$ matrix has rank four for all small $\epsilon>0$ if and only if 
        \begin{equation}
            \mathrm{dim} \pnorm{\operatorname{span}\{I, R(0), R'(0), R''(0), \dots\}
            } =3,
        \end{equation}
        if and only if $R(s)$ is not constant in $s$; 
        \item[(2)] The $4\times \infty$ matrix has rank three for all small $\epsilon>0$ if and only if 
        \begin{equation}
            \mathrm{dim} \pnorm{\operatorname{span}\{I, R(0), R'(0), R''(0), \dots\}
            } =2,
        \end{equation}
        if and only if $R(s)$ is constant in $s$ and 
        \begin{equation}
            \mathrm{dim} \pnorm{\operatorname{span}\{I, R(0)\}
            } =2;
        \end{equation}
        \item[(3)] The $4\times \infty$ matrix has rank two for all small $\epsilon>0$ if and only if 
        \begin{equation}
            \mathrm{dim} \pnorm{\operatorname{span}\{I, R(0), R'(0), R''(0), \dots\}
            } =1,
        \end{equation}
        if and only if $R(s)$ is constant in $s$ and 
        \begin{equation}
            \mathrm{dim} \pnorm{\operatorname{span}\{I, R(0)\}
            } =1.
        \end{equation}
    \end{enumerate}
\end{corollary}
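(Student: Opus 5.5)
The plan is to deduce the corollary from Theorem \ref{260509theorem2_1}. The only work is to translate its three cases (simultaneous diagonalizability of $R(s)$, and whether $R_{11}\equiv R_{22}$) into statements about $V:=\operatorname{span}\{I,R(0),R'(0),R''(0),\dots\}$ when the eigenvalues of $R(s)$ are constant. Throughout, $R(s)$ is a real symmetric $2\times 2$ matrix, analytic in $s$. Write $R(s)=\frac{\operatorname{tr}R}{2}I+R_0(s)$ with $R_0$ trace free. Constancy of the eigenvalues means that $\operatorname{tr}R(s)$ and $\det R(s)$ are constant. Hence $\operatorname{tr}R$ is a constant $c$, and $|R_0(s)|^2=\frac{(\lambda_1-\lambda_2)^2}{2}$ is also constant.

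\textbf{Step 1: a linear-algebra lemma.} For an analytic family, the $R(s)$ are simultaneously diagonalizable for all $s$ if and only if $\dim V\le 2$. If $\dim V\le 2$, then every $R^{(m)}(0)$ lies in $\operatorname{span}\{I,A\}$ for a single symmetric $A$. By analyticity, $R(s)\in\operatorname{span}\{I,A\}$ for all $s$, so the family commutes. Conversely, suppose all $R(s)$ commute. If $R_0\equiv 0$, then $V=\operatorname{span}\{I\}$. Otherwise $R_0(s_0)\ne 0$ for some $s_0$, and commuting symmetric trace-free $2\times2$ matrices are proportional. So $R_0(s)=f(s)B$ for a fixed trace-free $B$, and all derivatives lie in $\operatorname{span}\{I,B\}$. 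The general fact is that the trace-free parts span a space of dimension at most $3$, and commutativity means this space is at most one-dimensional.

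\textbf{Step 2: use the constant eigenvalues.} With constant eigenvalues, $R_0(s)$ moves on a circle of fixed radius in the $2$-dimensional space of trace-free symmetric matrices.
\begin{itemize}
\item If $R_0$ is proportional to a fixed $B$, then $R_0(s)=f(s)B$ with $|f|$ constant. By continuity $f$ is constant, so $R(s)$ is constant.
\item Conversely, if $R(s)$ is constant, then $V=\operatorname{span}\{I,R(0)\}$.
\end{itemize}
So $R$ is not SD $\iff$ $\dim V=3$ $\iff$ $R$ is nonconstant. Also, $R$ SD $\iff$ $R$ constant, and then $\dim V=\dim\operatorname{span}\{I,R(0)\}\in\{1,2\}$.

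\textbf{Step 3: match with Theorem \ref{260509theorem2_1}.}
\begin{itemize}
\item \emph{Rank four.} Item (3) of the theorem says rank four holds iff $R$ is not SD. By Steps 1–2 this gives (1) of the corollary.
\item \emph{Constant $R$.} In the SD case, $R$ is constant. Then $R_{11}\equiv R_{22}$ together with diagonalizability (the off-diagonal entry vanishes in the diagonalizing parallel frame) means $R(0)$ is a scalar matrix. In that case $\dim\operatorname{span}\{I,R(0)\}=1$, which is rank two by item (1) of the theorem.
\item \emph{Diagonalizing frame.} A small point is that the $E_i$ in item (1) are any orthonormal parallel frame. If the eigenvalues are equal, $R$ is scalar in every frame. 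If they differ, then $R_{11}\ne R_{22}$ in the eigenframe. This is exactly the dichotomy $\dim=1$ versus $\dim=2$, giving (2) and (3) of the corollary via items (2) and (1).
\end{itemize}

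\textbf{Main obstacle.} The only delicate step is the analyticity and continuity argument in Steps 1–2. One must show that commuting plus constant norm of the trace-free part forces $R$ to be constant, including near zeros of $R_0$. This is handled by the case split: either $R_0\equiv0$, or the constant norm $|R_0|$ is positive and $R_0$ never vanishes.
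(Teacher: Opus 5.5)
Your proof is correct, but the key step is organized differently from the paper's.

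\textbf{The paper's route.} The paper splits according to whether $\mu^2=-\det R_-$ vanishes. For $\mu=0$ it gets item (3) directly. For $\mu>0$ it proves item (2) and obtains item (1) by complementarity. The essential implication, $\dim V=2\Rightarrow R$ constant, is proved on the Taylor coefficients at $s=0$. One differentiates the Cayley--Hamilton identity $R_-(s)^2=\mu^2 I$ repeatedly to get $\operatorname{tr}\bigl(R_-(0)R_-^{(m)}(0)\bigr)=0$. Together with $R_-^{(m)}(0)\in\mathbb{R}R_-(0)$, this forces $R_-^{(m)}(0)=0$ inductively.

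\textbf{Your route.} You prove a single lemma, ``SD $\iff \dim V\le 2$'', using analyticity to pass from the jet at $0$ to the whole family. You then use continuity: $R_0(s)=f(s)B$ with $|f|$ constant, so $f$ is constant on a connected interval. This treats all three items uniformly. It also makes explicit the fact that SD plus constant eigenvalues forces $R$ to be constant. The paper quietly relies on this when it says that rank three $\iff$ ($R$ constant and $\dim\operatorname{span}\{I,R(0)\}=2$) ``follows immediately'' from Theorem \ref{260509theorem2_1}(2); it is really supplied by the argument that comes afterwards. Your remark that $R_{11}\equiv R_{22}$ must be read in a diagonalizing frame is likewise left implicit in the paper.

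\textbf{What each buys.} Your argument is shorter and more uniform. The paper's jet argument is purely algebraic and finite-order: it only needs constancy of $\det R_-$ up to order $m$ to conclude $R_-^{(j)}(0)=0$ for $1\le j\le m$.

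One harmless slip: the trace-free symmetric $2\times 2$ matrices form a $2$-dimensional space. So it is $V$, not the span of the trace-free parts, that has dimension at most $3$.
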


Fix $1\le r\le 4$ and $k\in \N$. 
    We say that a three dimensional analytic manifold $\mathcal{M}$ is 
    called an $(r, k)$-manifold if 
    \begin{equation}
        \mathrm{rank}\  \mathfrak{W}= \mathrm{rank} \ \mathfrak{W}_k=r,
    \end{equation}
    for every geodesic $\gamma$.  \\

As an immediate corollary of Theorem \ref{260509theorem2_1} and the discussions around \eqref{260518e1_16}, we obtain the following corollary. 

\begin{corollary}\label{260518corollary2_5}
    Fix $1\le r\le 4$ and $k\in \N$.  For a three dimensional analytic $(r, k)$-manifold, its Kakeya sets have dimension at least
    \begin{equation}\label{260518e2_19}
        2+\frac{1}{2k-1}.
    \end{equation}
    In particular, for $k\ge 4$, if a three dimensional analytic manifold satisfies the chaotic curvature condition of order $\le (k-3)$, which is the same as saying that the manifold is not $(k-1)$-exceptional, then its Kakeya sets have dimensions at least \eqref{260518e2_19}. 
\end{corollary}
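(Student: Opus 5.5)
The corollary is a combination of the Kakeya dimension bound \eqref{260518e1_16} with Theorem \ref{260509theorem2_1}. The plan is to reduce to the hypotheses of Theorem \ref{260518theorem1_4} for the phase $\phi(\bfx;y)=\dist_{\epsilon}(\bfx,y)$ and then apply the bound \eqref{260518e1_16}.

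Note first that
\begin{equation*}
\frac{4k-1}{2k-1}=2+\frac{1}{2k-1},
\end{equation*}
so \eqref{260518e2_19} is exactly \eqref{260518e1_16}. Note also that $r=1$ is impossible, since the rank of $\mathfrak{W}$ is always at least two; so we may assume $r\in\{2,3,4\}$.

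\textbf{Step 1: transfer the hypothesis to the phase.} Given an $(r,k)$-manifold, fix a point $\bfx$ and direction $y$. By \cite[Lemma 3.2]{DGGZ24} the characteristic curve through it is a geodesic $\gamma$. After the normalization \eqref{260502e1_5}, the matrix $\mathfrak{W}(x_0;\xi)$ of the phase is the matrix \eqref{260509e2_2} computed along $\gamma$. By the definition of an $(r,k)$-manifold, we get $\rank\mathfrak{W}=\rank\mathfrak{W}_k=r$ at every point.

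\textbf{Step 2: verify the rank-$r$ condition.} By Theorem \ref{260601theorem2_1}, the first row is not a linear combination of the other three. Hence $\rank\mathfrak{W}^-=r-1$, which is precisely the rank-$r$ condition of Definition \ref{260205defi1_1}.

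\textbf{Step 3: ellipticity and the dimension bound.} The Riemannian distance phase is elliptic in normal form, since its Hessian in $y$ is definite by the positivity of the metric. This is the standard computation in Subsection 5.2 of \cite{DGGZ24}. Theorem \ref{260518theorem1_4} and the Hickman--Rogers--Zhang argument leading to \eqref{260518e1_16} then apply verbatim and give the bound $2+\frac{1}{2k-1}$.

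\textbf{Step 4: the chaotic curvature clause.} Assume $k\ge4$ and that the chaotic curvature condition of order $\le(k-3)$ holds everywhere, equivalently, by (3.4)$\Leftrightarrow$(3.5), that the manifold is not $(k-1)$-exceptional. By (3.4)$\Rightarrow$(3.1) of Theorem \ref{260509theorem2_1}, the first $k$ columns of $\mathfrak{W}$ have rank $4$ along every geodesic for small $\epsilon$. Since $\mathfrak{W}$ has only four rows, $\rank\mathfrak{W}=\rank\mathfrak{W}_k=4$, so the manifold is a $(4,k)$-manifold and the first part gives the bound.

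\textbf{Main obstacle.} The only delicate point is uniformity: Theorem \ref{260509theorem2_1} is stated "for sufficiently small $\epsilon$" along a fixed geodesic, whereas Theorem \ref{260518theorem1_4} needs the rank conditions at all points of a fixed small neighbourhood. I expect to handle this through analyticity and compactness. The nonvanishing of the relevant minor of $\mathfrak{W}_k$ is an open, analytic condition in the data $(\bfx,y,\epsilon)$. The curvature condition of order $\le k-3$ holds at every point and direction, which form a compact set of unit directions over a compact neighbourhood. One can therefore choose a single $\epsilon_{\mc M}$ that works uniformly.
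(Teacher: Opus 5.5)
Your reduction is the one the paper intends: it calls the corollary immediate from Theorem \ref{260509theorem2_1}, Theorem \ref{260601theorem2_1} and \eqref{260518e1_16}. Steps 1--4 are right in substance. The step that does not work as written is your resolution of the uniformity issue, which you are right to flag.

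\textbf{Why compactness does not give a uniform $\epsilon$.} Openness of the nonvanishing of a $4\times 4$ minor in $(\gamma,\epsilon)$ only controls compact sets with $\epsilon$ bounded away from $0$. Item (3.1), however, gives for each geodesic a threshold $\epsilon_0(\gamma)$ obtained from analyticity near the degenerate endpoint $\epsilon=0$, where the entries of $\mathfrak{W}$ blow up like $\epsilon^{-j-1}$. The leading behaviour in $\epsilon$ of the minors is governed by different curvature quantities at different geodesics. For example, the $\epsilon^5$ coefficient of the leading minor vanishes exactly when $R(0)$ and $R'(0)$ commute, which is allowed under the order $\le k-3$ hypothesis once $k\ge 5$. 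Near such geodesics the interval on which that minor is nonzero can shrink, and some other minor must take over. Openness plus compactness gives no control over this handover, and the paper explicitly says the vanishing order is unknown for general $k$. So ``analyticity and compactness'' does not yield the single $\epsilon_{\mathcal{M}}$ that the fixed phase $\dist_{\epsilon}$ requires.

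\textbf{The fix.} Use the proof of (3.3)$\Rightarrow$(3.2) in Subsection \ref{260603subsection4_3}, not its statement, because that argument runs at one fixed $\epsilon$. Suppose $\mathfrak{W}_k$ has rank $\le 3$ at a single $\epsilon$. The argument then produces a nonzero trace-free $Y(\epsilon)$ with $\operatorname{tr}(Y(\epsilon)R^{(m)}(0))=0$ for $0\le m\le k-3$, via the triangular relation \eqref{260516e6_73}. This already forces $R(0),\dots,R^{(k-3)}(0)$ to be simultaneously diagonalizable. Hence (3.4) gives rank $4$ at every $\epsilon$ for which the Jacobi-field construction is defined, i.e.\ whenever $\gamma(0)$ and $\gamma(\epsilon)$ are not conjugate. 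That range is uniform over a compact neighbourhood. Theorem \ref{260601theorem2_1} is likewise proved at a fixed $\epsilon$, so Step 2 needs no uniformity either.

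\textbf{A smaller point in Step 3.} Definiteness of $\nabla_y^2\phi$ itself, which is $\sec(s,\epsilon)\approx(\epsilon-s)^{-1}I$, is not what ellipticity asks for. You need the $t$-derivative to be definite, namely that $\partial_sW(0,\epsilon)=B_2^{-1}(B_2^T)^{-1}$ from Lemma \ref{prop:Wrons} is positive definite. Equivalently, the surface $\{\nabla_{\mathbf{x}}\phi(\mathbf{x};y)\}_y$, which is the unit cosphere of $g$, is an ellipsoid.
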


Corollary \ref{260518corollary2_5} discusses Kakeya sets on manifolds satisfying the chaotic curvature condition of order $\le (k-3)$, or equivalently, manifolds that are not $(k-1)$-exceptional, for all $k\ge 4$. However, it is not even clear whether such manifolds exist. 
The authors in \cite{LP22} remarked that ``probably every Riemannian metric is $3$-exceptional". The theorem below confirms that this is indeed the case.

\begin{theorem}\label{existence}
Let $\mc{M}$ be an analytic manifold of dimension three. 
\begin{enumerate}
\item[(1)] There are no manifolds that satisfy the chaotic curvature condition of order $\le 1$, or equivalently, every manifold is $3$-exceptional; 
\item[(2)] One can find a manifold $\mc{M}_1$ satisfying the chaotic curvature condition of order $\le 2$, and so does every smooth perturbation of it; moreover, one can also find a manifold $\mc{M}_2$ failing the chaotic curvature condition of order $\le 2$, and so does every smooth perturbation of it; 
\item[(3)] A ``generic" manifold satisfies the chaotic curvature of order $\le 3$. More precisely, for a given metric $\{g_{ij}(\bfx)\}_{1\le i, j\le 3}$ defined on a small neighborhood of the origin $0\in \R^3$,  and every $\epsilon>0$ and $K$ sufficiently large ($K\ge 10$ is more than enough), there exists $\delta>0$ and a new metric $\{\widetilde{g_{ij}}(\bfx)\}_{1\le i, j\le 3}$ defined on $\B^{(3)}_{\delta}\subset \R^3$ such that 
\begin{equation}
    \|g_{ij}- \widetilde{g}_{ij}\|_{C^{K}(
    \B^{(3)}_{\delta}
    )}\le \epsilon, \ \ 1\le i, j\le 3
\end{equation}
and that the new metric $\{\widetilde{g_{ij}}(\bfx)\}_{1\le i, j\le 3}$ satisfies the chaotic curvature condition of order $\le 3$ on $\B^{(3)}_{\delta}$. 
\end{enumerate}

\end{theorem}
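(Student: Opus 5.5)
The plan is to reduce everything to the algebraic criterion (3.3)$\Leftrightarrow$(3.4) of Theorem \ref{260509theorem2_1}, and then argue pointwise over the unit sphere of directions. For a unit vector $v\in T_p\mc{M}$, write $R^{(m)}_v$ for $R^{(m)}(0)$ along the geodesic with $\gamma(0)=p$, $\gamma'(0)=v$, expressed in a parallel orthonormal frame of $v^\perp$. Let $\tau(A):=(A_{11}-A_{22},\,2A_{12})\in\R^2$ be the traceless part of a symmetric $2\times 2$ matrix. A family of symmetric matrices is simultaneously diagonalizable if and only if their traceless parts are pairwise collinear. So the chaotic curvature condition of order $\le m$ at $(p,v)$ says exactly that the vectors $\tau(R_v),\tau(R_v'),\dots,\tau(R_v^{(m)})$ are not all contained in one line.

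\textbf{Item (1).} Fix a point $p$ and regard $v\mapsto \tau(R_v)$ as a section over the unit sphere $S^2\subset T_p\mc{M}$. Since $v^\perp=T_vS^2$, this section lives in the bundle of traceless symmetric bilinear forms on $TS^2$. As a complex line bundle this bundle is $T S^2\otimes_{\mathbb C} TS^2$, which has Euler number $4\neq 0$. Hence every continuous section vanishes at some $v_0$. At $v_0$ the matrix $R_{v_0}$ is a multiple of the identity, so $R_{v_0}$ and $R'_{v_0}$ commute. Therefore (3.3) fails for $k=4$, so by Theorem \ref{260509theorem2_1} the chaotic condition of order $\le 1$ fails along that geodesic, and the geodesic is $3$-exceptional. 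I would double-check the identification of the bundle degree, but any nonzero degree suffices.

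\textbf{Items (2) and (3): reduction to a dimension count.} Fix a point $p$; only $v\in S^2$ varies, a $2$-dimensional parameter. The jets $R_v,R_v',R_v'',R_v'''$ are homogeneous polynomials in $v$ of degrees $2,3,4,5$. Their coefficients are determined by $\mathcal R(p),\nabla\mathcal R(p),\nabla^2\mathcal R(p),\nabla^3\mathcal R(p)$. Any algebraic jet satisfying the Bianchi symmetries is realized by a metric. Concretely, I would add polynomial corrections of degree $2$ to $5$ with small coefficients in normal coordinates; this changes the jets at the origin freely while keeping the $C^K$ change as small as desired on a small ball.

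For order $\le 3$, failure at $(p,v)$ requires $\tau(R'_v),\tau(R''_v),\tau(R'''_v)$ all to be collinear with $\tau(R_v)$ when $\tau(R_v)\ne0$. That is $3$ equations on the $2$-dimensional $S^2$. Where $\tau(R_v)=0$ (a generically finite set), failure requires the three remaining vectors to be collinear, which is $2$ further equations at isolated points. So for a generic choice of the top-order jet $\nabla^3\mathcal R(p)$, the failure set in $S^2$ is empty. This should be justified by a parametric transversality argument in the free coefficients of the degree-$5$ polynomial $R'''_v$. Once the condition holds at $p$ for all $v$, the quantity $\max_{\text{lines }L}\operatorname{dist}\big((\tau(R_v^{(m)}))_{m\le 3},L^4\big)$ is continuous and positive on the compact set $S^2$. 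It therefore stays positive for all $(q,v)$ with $q$ in a small ball $\B^{(3)}_\delta$. This yields (3), with $\delta$ depending on the perturbation.

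For order $\le 2$, the same count gives $2$ equations on $S^2$, so generically there are finitely many failure directions.
\begin{itemize}
\item For $\mc{M}_2$: choose jets with a transverse zero of $v\mapsto\big(\det(\tau R_v,\tau R'_v),\,\det(\tau R_v,\tau R''_v)\big)$ at some $v$ with $\tau R_v\neq0$. By the implicit function theorem in $(q,v)$, this zero persists under $C^K$-small perturbations, so failure is robust.
\item For $\mc{M}_1$: choose explicit jets at the origin, for example $\tau(R_v)$ with only the forced four zeros, where $\tau R'$ and $\tau R''$ are independent, and with $\det(\tau R,\tau R')$ and $\det(\tau R,\tau R'')$ having no common zeros on $S^2$. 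Openness on the compact set $S^2\times\overline{\B^{(3)}_\delta}$ then gives robustness.
\end{itemize}

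\textbf{Main obstacle.} The step I expect to be hardest is the explicit or generic construction of the algebraic curvature jets. For $\mc{M}_1$ one needs a concrete verification that two explicit polynomial maps on $S^2$ have no common zero. For (3) one must check that the available coefficients of $\nabla^3\mathcal R(p)$, restricted by the Bianchi symmetries, really give a submersion onto the relevant equations, so that the transversality count is legitimate. I would first try diagonal-type model jets in normal coordinates, where the computations from \eqref{260505e1_25a}--\eqref{260505e1_26a} are explicit, and perturb from there.
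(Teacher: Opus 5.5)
Item (2) has a genuine gap. You correctly see that $\mathcal{M}_1$ needs an explicit jet, but the proposal stops there. The properties you list are just a restatement of the chaotic curvature condition of order $\le 2$ at $p$, and nothing shows that a realizable jet with these properties exists. General position cannot supply one. By your own count, for generic jets the failure set of order $\le 2$ is a finite set of directions, and transverse failure directions persist under perturbation; that is exactly the mechanism behind $\mathcal{M}_2$. So genericity never forces the set to be empty, and emptiness has to be certified for one specific jet.

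Note also that ``no common zeros on $S^2$'' cannot hold literally. Both $\det(\tau R_v,\tau R'_v)$ and $\det(\tau R_v,\tau R''_v)$ vanish at the umbilic directions, and there you need $\det(\tau R'_v,\tau R''_v)\neq 0$ instead.

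This certification is what the paper supplies. It writes down an explicit metric and computes $\mathrm{Ric}$, $\nabla\mathrm{Ric}$, $\nabla^2\mathrm{Ric}$ at the origin. It then checks that the rotation-invariant quantities $D_0,D_1,D_2$ of Claim \ref{260602claim7_2} never vanish simultaneously for any unit $Z$. Up to sign, these are your three determinants $\det(\tau R',\tau R'')$, $\det(\tau R,\tau R'')$, $\det(\tau R,\tau R')$. Likewise, for $\mathcal{M}_2$ you still have to exhibit a realizable jet with a transverse zero. The paper uses the conformal metric \eqref{260602e7_87} with the frame $(e_1,e_2,e_3)$ and verifies $\det D\mathfrak{F}(0,0,0)\neq 0$. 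Without such computations, Item (2) is a plan rather than a proof.

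Item (1) is correct and takes a different, shorter route than the paper. The paper proves Lemma \ref{lem:algebraic-ricci} by a sign argument on the polynomial $F$. You instead pick a direction $v_0$ at which $R_{v_0}$ is scalar, using the Euler number $4$. Even more elementary: take the plane spanned by the middle eigenvector of $\mathrm{Ric}_p$ and a null vector of $\mathrm{Ric}_p-\lambda_2 g$ in the span of the other two eigenvectors. After that, any $R'_{v_0}$ commutes with $R_{v_0}$. This also proves the paper's lemma at once, by diagonalizing $B(v_0,\cdot,\cdot)$ on $v_0^\perp$.

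Item (3) is the paper's dimension count (four conditions against a three-dimensional frame space) in another parametrization. However, the transversality cannot be run in the coefficients of $\nabla^3\mathcal{R}(p)$ alone. If $\nabla\mathrm{Ric}(p)=\nabla^2\mathrm{Ric}(p)=0$, as for a locally symmetric starting metric, then at an umbilic direction $\tau R_{v_0}=\tau R'_{v_0}=\tau R''_{v_0}=0$. The condition then fails whatever the top jet is.

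You need all jets of order $2$ to $5$ as parameters, and the submersion then follows from a triangular structure. For instance, a conformal factor $e^{2u}$ with $u$ homogeneous of degree $m+2$ shifts $(\nabla^m_{Z\cdots Z}\mathrm{Ric})(X,E)$ at $0$ by $-\partial^{m+2}u(Z,\dots,Z,X,E)$ and leaves lower orders unchanged. This is also a safer substitute for your claim that every jet obeying the Bianchi symmetries is realizable, since the Ricci identities further constrain the unsymmetrized covariant derivatives. Finally, in the continuity step you need $\min_L$, not $\max_L$.
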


Item (1) of Theorem \ref{existence} says that  variably curved manifolds (defined below Definition \ref{260505defi1_6}) do not exist. A statement that is almost equivalent to Item (1) is that at every point on the manifold, we can find a geodesic $\gamma$  passing through that point such that the manifold fails the chaotic curvature condition of order $\le 1$ along $\gamma$ at that point. 

Of course one can easily find manifolds $\mc{M}$ and geodesics $\gamma$ on them such that $\mc{M}$ satisfies the chaotic curvature condition of order $\le 1$ at $\gamma(0)$ along $\gamma$, and therefore the more general version of Theorem 3.4 in \cite{Sog99}, which discusses chaotic curvatures of order $\le 1$ along a family of geodesics, is perfectly valid; indeed, this is the notion of chaotic curvature adopted by \cite{GLX25}, where the authors proved that a ``generic" three dimensional Riemannian manifold  satisfies Sogge's chaotic curvature condition of order $\le 1$, under the extra assumption that we are also allowed to restrict the directions of geodesics when defining what ``generic" means. \footnote{In other words, what ``generic" means in \cite{GLX25} is much more restrictive than in Theorem \ref{existence}.} \\

As a consequence of Theorem \ref{existence}, Theorem \ref{260509theorem2_1} and  the discussion around \eqref{260518e1_16}, we obtain immediately the following corollary. 

\begin{corollary}\label{260601coro2_7}
    For a  generic three dimensional analytic manifold, its Kakeya sets have dimension at least $2+1/11$. 
\end{corollary}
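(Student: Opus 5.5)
The plan is to combine Item (3) of Theorem \ref{existence} with the equivalences in Theorem \ref{260509theorem2_1}, which reduces the claim to Corollary \ref{260518corollary2_5} with $k=6$. Here ``generic'' is understood in the sense of Item (3) of Theorem \ref{existence}: after an arbitrarily small $C^K$ perturbation of the metric and a restriction to a small ball $\B^{(3)}_\delta$, the manifold satisfies the chaotic curvature condition of order $\le 3$ at every point of $\B^{(3)}_\delta$ and along every geodesic through that point.

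\textbf{Step 1: rank four with six columns along each geodesic.} Fix such a geodesic $\gamma$ and normalize it as in \eqref{260502e1_5}. Apply Theorem \ref{260509theorem2_1}(3) with $k=6$. Condition (3.4), chaotic curvature of order $\le k-3=3$, is equivalent to (3.1). Hence for all sufficiently small $\epsilon$ the first six columns of \eqref{260509e2_2} have rank $4$. Since the full matrix $\mathfrak{W}$ has only four rows, this gives
\begin{equation}
\rank \mathfrak{W}=\rank\mathfrak{W}_6=4 .
\end{equation}
By Theorem \ref{260601theorem2_1}, the first row is not a linear combination of the other three, so $\rank\mathfrak{W}^-=3$. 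Therefore the rank-$4$ condition of Definition \ref{260205defi1_1} holds, and not merely a rank count.

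\textbf{Step 2: pass to the operator setting.} The phase $\phi=\dist_\epsilon(\bfx,y)$ satisfies H\"ormander's non-degeneracy condition (Subsection 5.2 of \cite{DGGZ24}). It is elliptic, because the second fundamental form of Riemannian distance spheres is definite for small $\epsilon$. Its characteristic curves are geodesics by \cite[Lemma 3.2]{DGGZ24}. So the manifold is a $(4,6)$-manifold, and Corollary \ref{260518corollary2_5} (equivalently, \eqref{260518e1_16} with $k=6$) gives Kakeya dimension at least
\begin{equation}
2+\frac{1}{2\cdot 6-1}=2+\frac{1}{11}.
\end{equation}

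\textbf{Main obstacle: uniformity.} Theorem \ref{260509theorem2_1} is a statement about one geodesic, with $\epsilon$ small depending on $\gamma$. The Kakeya argument needs the hypothesis $\rank\mathfrak{W}_6(x_0;\xi)=4$ at every $(x_0,\xi)$ for a single fixed $\epsilon$. I would handle this in three moves.
\begin{itemize}
\item Use (3.3): the failure of simultaneous diagonalizability of $R(0),\dots,R^{(3)}(0)$ is an open, nonvanishing condition. By compactness of the unit sphere bundle over $\overline{\B^{(3)}_{\delta/2}}$, it holds with a quantitative lower bound on the relevant $4\times4$ minors.
\item Invoke the Taylor expansion in $\epsilon$ underlying the proof of (3.3)$\Rightarrow$(3.1). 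The leading coefficient is a polynomial in these curvature quantities, so its lower bound is uniform over the compact family.
\item Conclude that a single $\epsilon$ works for all geodesics through the region, after shrinking $\epsilon_\phi$ as permitted by the paper's conventions.
\end{itemize}
If the proof of Theorem \ref{260509theorem2_1} does not expose such a leading coefficient, I would fall back on two facts. First, the minors are analytic in $(\epsilon,\gamma)$. Second, the vanishing order in $\epsilon$ is upper semicontinuous. Together with compactness, these still give a uniform $\epsilon$.
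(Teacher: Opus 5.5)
Your Steps 1 and 2 are the argument the paper intends; it calls the corollary immediate and omits the proof. Theorem \ref{existence}(3) gives chaotic curvature of order $\le 3$. Theorem \ref{260509theorem2_1}(3) with $k=6$ turns this into $\operatorname{rank}\mathfrak W=\operatorname{rank}\mathfrak W_6=4$. Then \eqref{260518e1_16} with $k=6$ gives $23/11=2+1/11$. The appeal to Theorem \ref{260601theorem2_1} is harmless but unnecessary: a four-row matrix of rank four has independent rows, so $\operatorname{rank}\mathfrak W^-=3$ automatically.

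Where the proposal goes wrong is the uniformity paragraph.

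\emph{Your first route} needs a leading Taylor coefficient in $\epsilon$ of a $4\times 4$ minor of $\mathfrak W_6$, written in terms of $R(0),\dots,R^{(3)}(0)$. No such expansion exists. The paper says it could not carry out the $\epsilon$-expansion beyond $k=4$; only \eqref{260505e4_6} is available. Its proof of (3.3)$\Rightarrow$(3.2) is an algebraic kernel argument, not a Taylor expansion. Moreover, (3.3) gives lower bounds on curvature quantities, not on minors. Linking the two is exactly the missing coefficient.

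\emph{Your fallback} is false as a general principle. Take $G(\gamma,\epsilon)=\epsilon(\epsilon-\gamma^2)$ with $\gamma\in[-1,1]$. It is analytic, $G(\gamma,\cdot)\not\equiv 0$ for each $\gamma$, and its vanishing order at $\epsilon=0$ is at most $2$ (upper semicontinuous). Yet $G(\gamma,\gamma^2)=0$, so no uniform $\epsilon_0$ exists. Bounded vanishing order says nothing about zeros created by lower-order coefficients that are small but nonzero.

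The repair is already in the paper. In Subsection \ref{260603subsection4_3} one fixes a single $\epsilon$ and takes a kernel vector of the first $k$ columns at that $\epsilon$ alone. From it one derives $\operatorname{tr}(Y(\epsilon)R^{(m)}(0))=0$ for $0\le m\le k-3$, with $Y(\epsilon)\neq 0$ symmetric and trace-free. Nothing uses a second value of $\epsilon$. The only requirement is that $\epsilon$ lie below the injectivity radius, so that there are no conjugate points and $A_2(s)$ is invertible for $0\le s<\epsilon$.

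Hence (3.3) at $\gamma(0)$ gives $\operatorname{rank}\mathfrak W_6=4$ for every such $\epsilon$, uniformly over a compact region. This is exactly the single $\epsilon$ you wanted, with no compactness or semicontinuity argument. Transferring it to every $(x_0,\xi)$ only uses that the rank is unchanged by the relevant coordinate changes: $W\mapsto J^TWJ$ with $J$ constant along the curve, and reparametrizing $t$ acts on the columns by an invertible triangular matrix.

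For the Kakeya statement even this is more than needed. A Kakeya set refers to one fixed phase $\operatorname{dist}_\epsilon$ around one fixed geodesic, and $\epsilon_\phi$ may depend on $\phi$. A $4\times4$ minor that is nonzero at $(x_0,\xi)=(0,0)$ therefore stays nonzero on the $\epsilon_\phi$-ball by continuity.
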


We also conjecture that for every three dimensional analytic manifold, there exists $\kappa>0$ depending on the manifold such that \eqref{260503e1_2} with $R_{\varepsilon, N} f$ (see \eqref{260518e1_18} for its definition) in place of  $T_N^{(\phi)}f$, holds for all 
\begin{equation}
    p\ge \frac{10}{3}-\kappa.
\end{equation}
If this were true, it would imply that for every three dimensional analytic manifold, there exists $\kappa'>0$ depending on the manifold such that its Kakeya sets have dimension at least $2+\kappa'$.

Moreover, Minicozzi and Sogge \cite{MS97} constructed a sequence of three dimensional analytic manifolds along which $\kappa\to 0$. In other words, $\kappa$ cannot be chosen to be a universal constant.

\section{Proof of corollaries}

The proofs for Corollary \ref{260509coro2_3}, Corollary \ref{260518corollary2_5} and Corollary \ref{260601coro2_7} are immediate, and we will skip their proofs. In this section, we will only present the proofs of Corollary  \ref{260509coro2_2} and Corollary \ref{260507theorem1_11}.

\subsection{Proof of Corollary \ref{260509coro2_2}}

Let us prove something more general. 
\begin{lemma}\label{260601lemma4_1}
Let $\phi$ be a phase function satisfying H\"ormander's non-degeneracy condition. 
    That $\phi$ satisfies rank-$2$ condition at $(x_0; \xi)$, is equivalent to saying that $\phi$ satisfies Bourgain's condition along $\Gamma_{\xi}(x_0)$. 
\end{lemma}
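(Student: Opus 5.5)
The plan is to express both conditions through the curve of $2\times 2$ symmetric matrices $t\mapsto W(t):=\bigl(W_{ij}(t;x_0;\xi)\bigr)_{i,j}$ and compare them. First, rank-$2$ at $(x_0;\xi)$ means $\operatorname{rank}\mathfrak{W}^-=1$ (the rank of $\mathfrak{W}$ is always at least two). Since $W(0)=0$ and everything is analytic, $\operatorname{rank}\mathfrak{W}^-(x_0;\xi)=1$ says that all derivatives $W^{(k)}(0)$, $k\ge1$, are multiples of one fixed nonzero symmetric matrix $B$ (nonzero because H\"ormander's condition gives $\partial_tW(0)\neq0$). By analyticity this is the same as
\[
W(t)=w(t)\,B\quad\text{for all small }t,
\]
for a scalar function $w$ with $w(0)=0$ and $w'(0)\neq0$. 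Equivalently, the direction of $\partial_t$ of the Hessian $\nabla_\xi^2\phi$ along the curve $t\mapsto (X(t;x_0;\xi),t)$ stays fixed.

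Second, I would unwind Bourgain's condition at a point $(\bfx_1;\xi)$ of $\Gamma_\xi(x_0)$. Put $\phi$ in normal form at that point. The only data that survive are $\nabla^2_\xi\partial_t\widetilde\phi(0)$ and $\nabla^2_\xi\partial_t^2\widetilde\phi(0)$. These should equal, up to a fixed invertible congruence $M^{T}(\cdot)M$ coming from the linear change of $\xi$ variables, the first and second derivatives of the Hessian $\nabla_\xi^2\phi$ along the characteristic curve through that point. The reason is that the normal form is set up so that $\{x=\mathrm{const}\}$ are the characteristic curves to the required order. The key computation is to check that the reparametrization of $t$ and the $x$-dependent corrections in passing to normal form only add a multiple of the first derivative to the second derivative. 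Granting this, Bourgain's condition at the point $\Gamma_\xi(x_0)\ni(X(t),t)$ reads
\[
W''(t)\in \operatorname{span}\{W'(t)\}.
\]
Here I use that $W_{ij}$ differs from the Hessian along the curve only by the constant $\nabla^2_\xi\phi(x_0,0;\xi)$.

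Third, I would compare the two formulations. If $W=wB$, then $W''=w''B$ and $W'=w'B$ with $w'\neq0$ near $0$, so $W''\parallel W'$ at every point of the curve. Conversely, suppose $W''(t)=c(t)W'(t)$ for all $t$. Then $W'$ solves a linear ODE, giving $W'(t)=e^{\int_0^t c}\,W'(0)$, so $W'(t)$ is a multiple of $B:=W'(0)$. Integrating, $W(t)=w(t)B$, and therefore $\operatorname{rank}\mathfrak{W}^-=1$. One subtlety is whether $c(t)$ is regular enough to integrate. This follows because $W'(t)\neq0$ by nondegeneracy, so $c$ is a ratio of analytic entries with nonvanishing denominator.

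The main obstacle is the second step. There I must check carefully that the normal-form reduction at a general point of the curve transforms $(\partial_t\nabla^2_\xi,\partial_t^2\nabla^2_\xi)$ along the characteristic into $(\lambda M^{T}W'M,\ \lambda^2M^{T}W''M+\mu M^{T}W'M)$ for some scalars $\lambda\neq0$ and $\mu$. That is the form in which proportionality is preserved. I would verify it by writing the change of variables $x\mapsto$ (value of $\nabla_\xi\phi$) and the time reparametrization explicitly and differentiating twice along the curve.
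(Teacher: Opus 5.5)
Your proposal is correct and follows essentially the paper's route. You use the normal form at each point of $\Gamma_{\xi}(x_0)$ to turn Bourgain's condition into a proportionality statement about the Hessian curve $W(t)$, and you use analyticity to identify rank two of $\mathfrak{W}$ with $W(t)$ staying on a fixed line; the paper phrases this last condition as $A_i=c_iA$ for the Taylor coefficients of $\nabla^2_{\xi}\phi(0,t;0)$. Your pointwise form $W''(t)\in\operatorname{span}\{W'(t)\}$, together with the linear-ODE integration, just spells out what the paper compresses into ``turning into a normal form at $(0,t_0;0)$ for every $t_0$.'' When you check the invariance step, note that a nonlinear change $\xi=\Theta(\eta)$ in the normal-form reduction also adds $\sum_k \partial_{\xi_k}\phi\,\nabla^2\Theta_k$ to the Hessian. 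That term is constant along the characteristic curve, so it drops out of $W$.
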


Recall that it is proven in \cite{DGGZ24} that the distance function $\dist_{\epsilon}(\bfx, y)$ satisfies Bourgain's condition for every $\bfx, y, \epsilon$ if and only if the manifold $\mathcal{M}$ has constant sectional curvature. This, combined with 
Lemma \ref{260601lemma4_1}, finishes the proof of Corollary \ref{260509coro2_2}.

\begin{proof}[Proof of Lemma \ref{260601lemma4_1}]
    This lemma is well-known to people in the community of curved Kakeya problems and H\"ormander's problems, but has never been written down. Let us include the proof here for completeness.  Without loss of generality, we assume that 
\begin{equation}
x_0=(0, 0), \ \ \xi=(0, 0),
\end{equation}
and that $\phi$ is of a normal form at the origin, that is, 
\begin{equation}\label{260503e3_2}
\phi(x, t; \xi)= \inn{x}{\xi}+ t\inn{A\xi}{\xi}+
O\pnorm{
|t||\xi|^3+ |\bfx|^2 |\xi|^2
}, \ \ \bfx:=(x, t),
\end{equation}
and $A$ is a diagonal non-degenerate $2\times 2$ matrix. Under this normalization, we know that 
\begin{equation}
X(t; x_0; \xi)= 0, \ \forall t,
\end{equation}
and 
\begin{equation}
W_{ij}(t; x_0; \xi)= 
\partial_{\xi_i}\partial_{\xi_j} \phi(0, t; 0). 
\end{equation}
From now on, to simplify notation, let us remove $x_0, \xi$ from our notation, and simply write $W_{ij}(t)$ for instance. \\

Let us write 
\begin{equation}
\nabla^2_{\xi} \phi(0, t; 0)= tA+ t^2 A_2+\dots.
\end{equation}
That $\phi$ satisfies Bourgain's condition along the characteristic curve $\Gamma_{\xi}(x_0)$ is equivalent to that 
\begin{equation}\label{260503e3_6}
A_i= c_i A, \ \ i=2, 3, \dots
\end{equation}
which can be proven by turning \eqref{260503e3_2} into a normal form at  $(0, t_0; 0)$ for every $t_0$. Moreover, it is elementary to see that \eqref{260503e3_6} is equivalent to that the matrix $\mathfrak{W}$ given by \eqref{260309e1_4} has rank two at $(x_0; \xi)$. This finishes the proof of the lemma.
\end{proof}

\subsection{Proof of Corollary \ref{260507theorem1_11}}

Let $c_1$ be the trace of $R(s)$, and $c_2$ its determinant. Let us first prove that 
\begin{equation}\label{260510e8_1}
    c_2- \frac{c_1^2}{4}\le 0.
\end{equation}
Note that 
\begin{equation}
    c_1= R_{11}+ R_{22}, \ \ c_2= R_{11}R_{22}- (R_{12})^2.
\end{equation}
We can compute 
\begin{equation}\label{260510e8_3}
    c_2- \frac{c_1^2}{4}=
    R_{11}R_{22}- (R_{12})^2- 
    \frac{(R_{11}+ R_{22})^2}{4}= 
    -(R_{12})^2-
    \frac{(R_{11}- R_{22})^2}{4}.
\end{equation}
This proves \eqref{260510e8_1}.\\

Under the assumption that the eigenvalues of $R(s)$ are constant in $s$, we know that both $c_1$ and $c_2$ are constants in $s$. Decompose 
\begin{equation}
    R(s)= \frac{c_1}{2} I+ R_-(s),
\end{equation}
where $R_-(s)$ is the trace-free part of $R(s)$. One can compute directly that 
\begin{equation}
    \det R_-(s)= c_2- \frac{c_1^2}{4}.
\end{equation}
Because of \eqref{260510e8_1}, we denote 
\begin{equation}
    c_2- \frac{c_1^2}{4}=-\mu^2\le 0, 
\end{equation}
for some $\mu\ge 0$.

\bigskip

Let us first prove Item (3). From \eqref{260510e8_3} it is easy to see that $\mu=0$ if and only if 
\begin{equation}
    R_{11}(s)\equiv R_{22}(s), \ \ R_{12}(s)\equiv 0,
\end{equation}
which is the same as saying that 
\begin{equation}
    R_-(s)=0, \ \forall s;
\end{equation}
in other words, the matrix $R(s)$ is a constant scalar matrix. 

Note that (3) has three statements: The first statement is 
\begin{equation}\label{260517e8_10}
    \text{the $4\times \infty$ matrix has rank two for all small $\epsilon$},
\end{equation}
the second statement is 
\begin{equation}\label{260517e8_11}
\operatorname{dim}\left(\operatorname{span}\left\{I, R(0), R^{\prime}(0), R^{\prime \prime}(0), \ldots\right\}\right)=1,
\end{equation}
and the third statement is 
\begin{equation}\label{260517e8_12}
    R(s) \text{\ is constant in\ } s, \ \operatorname{dim}(\operatorname{span}\{I, R(0)\})=1 .
\end{equation}
In Theorem \ref{260509theorem2_1} we already obtained that the matrix \eqref{260509e2_2} has rank two if and only if $R(s)$ is simultaneously diagonalizable, and after diagonalizing we have 
\begin{equation}
    R_{11}(s)=R_{22}(s), \ \forall s. 
\end{equation}
Recall that currently we are assuming that all eigenvalues of $R$ are constant. As a consequence, we obtain that the matrix \eqref{260509e2_2} has rank two if and only if $R(s)$ is a constant scalar matrix, that is, \eqref{260517e8_10} is equivalent to \eqref{260517e8_12}. 

Note that \eqref{260517e8_11} is the same as saying that $R(s)$ is a scalar matrix, and therefore by the assumptions that all eigenvalues of $R$ are constant, we can conclude that \eqref{260517e8_11} is the same as saying that $R(s)$ is a constant scalar matrix. 

So far we have proven that all the three statements \eqref{260517e8_10}, \eqref{260517e8_11} and \eqref{260517e8_12} are equivalent to that $R(s)$ is a constant scalar matrix, and moreover they are all equivalent to that $\mu=0$.\\

From now on let us assume that $\mu>0$. We are about to prove (1) and (2). It is elementary to see that to prove the entire Corollary \ref{260507theorem1_11}, we only need to prove one of the two statements (1) and (2). Here we will prove item (2). 

Item (2) contains three statements: The first statement is 
\begin{equation}\label{260517e8_10z}
    \text{the $4\times \infty$ matrix has rank three for all small $\epsilon$},
\end{equation}
the second statement is 
\begin{equation}\label{260517e8_11z}
\operatorname{dim}\left(\operatorname{span}\left\{I, R(0), R^{\prime}(0), R^{\prime \prime}(0), \ldots\right\}\right)=2,
\end{equation}
and the third statement is 
\begin{equation}\label{260517e8_12z}
    R(s) \text{\ is constant in\ } s, \ \operatorname{dim}(\operatorname{span}\{I, R(0)\})=2.
\end{equation}
That \eqref{260517e8_10z} is equivalent to \eqref{260517e8_12z} follows immediately from item (2) in Theorem \ref{260509theorem2_1}. That \eqref{260517e8_12z} implies \eqref{260517e8_11z} is trivial. It remains to prove that \eqref{260517e8_11z} implies \eqref{260517e8_12z}.

Note that \eqref{260517e8_11z} is the same as saying that 
\begin{equation}\label{260517e5_22z}
\operatorname{dim}\left(\operatorname{span}\left\{R_-(0), R_-^{\prime}(0), R_-^{\prime \prime}(0), \ldots\right\}\right)=1. 
\end{equation}
By the fact that $\mu>0$, we see that $R_-(0)$ is not the zero matrix. This, combined with \eqref{260517e5_22z}, implies that $R^{(m)}_-(0)$ is a multiple of $R_-(0)$, for every $m\ge 1$. We claim that 
\begin{equation}
    R^{(m)}_-(0)=0, \ \forall m\ge 1.
\end{equation}
Once this is proven, we see immediately that \eqref{260517e8_12z} holds.

\begin{claim}\label{260509claim8_1}
    Let $\mathcal{S}_0$ be the collection of all trace-free $2\times 2$ symmetric matrices, which is two-dimensional. We can define an inner product
    \begin{equation}
        \inn{A}{B}:= \mathrm{tr}(AB). 
    \end{equation}
\end{claim}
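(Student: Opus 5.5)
The plan is to verify the claim by a direct computation in coordinates: write down an explicit basis of $\mathcal{S}_0$ and compute $\mathrm{tr}(AB)$ in that basis, which should identify the pairing with a multiple of the Euclidean dot product.

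\textbf{Dimension.} First I will show that $\mathcal{S}_0$ is a real vector space of dimension two. It is closed under addition and scalar multiplication, because symmetry and vanishing trace are both linear conditions. A symmetric $2\times 2$ matrix has three free entries, and the trace condition removes one of them. So every element of $\mathcal{S}_0$ can be written uniquely as
\begin{equation*}
A=\begin{bmatrix} a & b\\ b & -a\end{bmatrix}=a\,E_1+b\,E_2,\qquad E_1:=\begin{bmatrix} 1&0\\0&-1\end{bmatrix},\quad E_2:=\begin{bmatrix} 0&1\\1&0\end{bmatrix}.
\end{equation*}
Hence $\{E_1,E_2\}$ is a basis and $\dim\mathcal{S}_0=2$.

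\textbf{Inner product.} Next I will check the inner product axioms. Bilinearity of $(A,B)\mapsto \mathrm{tr}(AB)$ is immediate from linearity of the trace. Symmetry follows from $\mathrm{tr}(AB)=\mathrm{tr}(BA)$. For positive definiteness I will compute in the basis above. For $A=aE_1+bE_2$ and $B=cE_1+dE_2$, multiplying out gives
\begin{equation*}
\mathrm{tr}(AB)=(ac+bd)+(bd+ac)=2(ac+bd).
\end{equation*}
So $\langle\cdot,\cdot\rangle$ is exactly twice the Euclidean inner product in the coordinates $(a,b)$. Consequently $\langle A,A\rangle=2(a^2+b^2)>0$ unless $A=0$. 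Equivalently, for a symmetric matrix, $\mathrm{tr}(A^2)$ is the sum of the squares of its entries.

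\textbf{Identity for the subsequent argument.} I will also record an identity needed right afterwards. For $A\in\mathcal{S}_0$ one has $\det A=-a^2-b^2$, and hence
\begin{equation*}
\langle A,A\rangle=-2\det A.
\end{equation*}
Applied to $R_-(s)$, this gives $|R_-(s)|^2=-2\det R_-(s)=2\mu^2$, which is constant in $s$. Differentiating yields $\langle R_-(s),R_-'(s)\rangle=0$. Since $R_-'(0)$ is a multiple of $R_-(0)\neq 0$, this forces $R_-'(0)=0$; the higher derivatives $R^{(m)}_-(0)$ can then be handled by induction on $m$.

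There is no genuine obstacle in the claim itself; the only care needed is the explicit multiplication giving the factor $2$.
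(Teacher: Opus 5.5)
Your proof is correct and is the elementary verification the paper has in mind; the paper omits it as elementary. The basis $\{E_1,E_2\}$ gives $\dim\mathcal{S}_0=2$, and $\mathrm{tr}(AB)=2(ac+bd)$ shows the pairing is twice the Euclidean inner product in the coordinates $(a,b)$.

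Your closing remark differs only cosmetically from the paper's follow-up argument. The paper differentiates $R_-(s)^2=\mu^2 I$ and uses the anticommutator identity $AB+BA=\mathrm{tr}(AB)I$. You instead differentiate the equivalent scalar identity $\langle R_-(s),R_-(s)\rangle=-2\det R_-(s)=2\mu^2$. Both routes yield $\langle R_-(0),R_-^{(m)}(0)\rangle=0$ by induction on $m$.
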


\begin{claim}\label{260509claim8_2}
    For $A, B\in \mathcal{S}_0$, we have 
    \begin{equation}
        A B + B A = \operatorname{tr}(AB) I. 
    \end{equation}
\end{claim}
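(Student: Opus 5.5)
The identity $AB+BA=\operatorname{tr}(AB)\,I$ for $A,B\in\mathcal{S}_0$ is the polarization of the Cayley--Hamilton identity for trace-free $2\times 2$ matrices, and I plan to prove it that way, by bilinearity.

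First I treat the diagonal case $A=B$. For any $2\times 2$ matrix $M$, Cayley--Hamilton gives $M^2-\operatorname{tr}(M)M+\det(M)I=0$. If $M\in\mathcal{S}_0$, then $\operatorname{tr}M=0$, so $M^2=-\det(M)\,I$. Taking traces of both sides gives $\operatorname{tr}(M^2)=-2\det M$. Hence
\begin{equation}
M^2=\tfrac12\operatorname{tr}(M^2)\,I, \qquad M\in\mathcal{S}_0,
\end{equation}
and multiplying by $2$ gives $MM+MM=\operatorname{tr}(MM)I$, which is the claim for $A=B=M$.

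Next I polarize. Apply the diagonal identity to $M=A+B$, which lies in $\mathcal{S}_0$ since $\mathcal{S}_0$ is a linear space. Expanding gives
\begin{equation}
A^2+AB+BA+B^2=\tfrac12\bigl(\operatorname{tr}(A^2)+2\operatorname{tr}(AB)+\operatorname{tr}(B^2)\bigr)I,
\end{equation}
where I used $\operatorname{tr}(BA)=\operatorname{tr}(AB)$. Subtracting the diagonal identities for $A$ and for $B$ leaves $AB+BA=\operatorname{tr}(AB)\,I$, which is the claim.

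As a sanity check, one can also verify the identity directly. Write $A=\begin{bmatrix}a&b\\ b&-a\end{bmatrix}$ and $B=\begin{bmatrix}c&d\\ d&-c\end{bmatrix}$. Then
\begin{equation}
AB=\begin{bmatrix}ac+bd & ad-bc\\ bc-ad & bd+ac\end{bmatrix},
\end{equation}
and $BA$ is obtained by swapping the roles $(a,b)\leftrightarrow(c,d)$, so its off-diagonal entries are the negatives of those of $AB$. Therefore $AB+BA=2(ac+bd)I=\operatorname{tr}(AB)I$. This computation also shows that $\operatorname{tr}(AB)=2(ac+bd)$ is positive definite on $\mathcal{S}_0$, which justifies Claim \ref{260509claim8_1}. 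There is no real obstacle in this proof; the only point needing care is that symmetry is used solely to fix the parametrization in the direct check, while the Cayley--Hamilton route needs only that $A$ and $B$ are trace-free.
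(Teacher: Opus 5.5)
Your proof is correct. Both the Cayley--Hamilton polarization and the entrywise check are valid, and you rightly note that only trace-freeness is needed for the first. The paper omits the proof as elementary, but your route is the one it implicitly relies on: the polarized Cayley--Hamilton identity $AB+BA=(\operatorname{tr}A)B+(\operatorname{tr}B)A+\bigl(\operatorname{tr}(AB)-\operatorname{tr}A\,\operatorname{tr}B\bigr)I$, used in the proof of Proposition~\ref{260505prop4_5}, reduces to the claim when $\operatorname{tr}A=\operatorname{tr}B=0$.
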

The proofs for these two claims are elementary, and we leave them out.  By the Cayley-Hamilton theorem, we obtain 
\begin{equation}\label{260509e8_4}
    (R_-(s))^2= - (\det R_-(s)) I= \mu^2 I. 
\end{equation}
We take the derivative in $s$ on both sides of \eqref{260509e8_4} and obtain 
\begin{equation}\label{260509e8_8}
    R_-(s) R'_-(s)+ R'_-(s) R_-(s)=0.
\end{equation}
Note that $R_-(s)$ is trace-free, and therefore $R'_-(s)$ is trace-free as well. We combine \eqref{260509e8_8}, Claim \ref{260509claim8_1} and Claim \ref{260509claim8_2}, and obtain that $R'_-(s)$ is orthogonal to $R_-(s)$ in $\mathcal{S}_0$. However, previously we also obtained that $R'_-(0)$ is a scalar multiple of $R_-(0)$. All these imply that 
\begin{equation}
    R'_-(0)=0.
\end{equation}
We can continue this process by differentiating more on both sides of \eqref{260509e8_4}, and eventually obtain 
\begin{equation}
    R^{(m)}_-(0)=0, \ \forall m\ge 1.
\end{equation}
This finishes the proof of the entire corollary.

\section{Proof of Theorem \ref{260509theorem2_1}: Item (3)}

Item (3) consists of two parts. The first part is to show that the matrix \eqref{260509e2_2} has rank four for all small enough $\epsilon$ if and only if $R(s)$  is not simultaneously diagonalizable in $s$, and the second part is the equivalence of items (3.1)-(3.5).  We will only prove the second part, and the first part follows immediately. \\

Let us be more precise. Assume that $R(s)$  is not simultaneously diagonalizable in $s$. We can therefore find $k$ such that $R(0), \dots, R^{(k-3)}(0)$ cannot be simultaneously diagonalized, that is, item (3.3) holds. By that (3.3) and (3.1) are equivalent, we can conclude that the matrix \eqref{260509e2_2} has rank four for all small enough $\epsilon$. 

Now we assume that the matrix \eqref{260509e2_2} has rank four for all small enough $\epsilon$. We can conclude immediately that there exist $k$ and $\epsilon>0$ such that the first $k$ columns of the matrix \eqref{260509e2_2} has rank four. By that (3.2) and (3.3) are equivalent, we obtain that $R(0), \dots, R^{(k-3)}(0)$ cannot be simultaneously diagonalized, and therefore $R(s)$  is not simultaneously diagonalizable in $s$. \\

In the rest of this section, we will prove the equivalence of (3.1)-(3.5). That (3.1) and (3.2) are equivalent follows immediately from analyticity. In Subsection \ref{260505sub4_1} we will prove that (3.2) implies (3.3), in Subsection \ref{260603subsection4_3} we will prove that (3.3) implies (3.2), and in the last subsection we will prove that (3.3), (3.4) and (3.5) are mutually equivalent.

\subsection{Preliminary calculations}\label{260601subsection5_1}

Let $\gamma$ be a geodesic with unit speed. We make the same normalization as in \eqref{260502e1_5} that 
\begin{equation}\label{260504e3_7}
\gamma(0)=(0, 0, 0), \ \ \gamma(s)=(0, 0, s), \forall s.
\end{equation}
We take the phase function 
\begin{equation}\label{260504e3_8}
\phi_{\epsilon}(x, t; \xi):= \dist((x, t), (\xi, \epsilon)),
\end{equation}
and compute the $4\times \infty$ matrix \eqref{260309e1_4}  at 
\begin{equation}\label{260504e3_9}
x_0=(0, 0), \ \ \xi=(0, 0).
\end{equation}
Characteristic curves for the distance function are precisely given by geodesics (see for instance Lemma 3.2 in \cite{DGGZ24}), and therefore 
\begin{equation}
X(t; x_0; \xi)=0, \ \ \forall t,
\end{equation}
due to the normalization in \eqref{260504e3_7}, where $X$ was introduced in \eqref{260502e1_1}.

We will rely heavily on several calculations from \cite{DGGZ24}. In order for our notation to be consistent with the ones in \cite{DGGZ24}, we will switch from $\xi$ to $y$.  We follow (3.32)--(3.34) in \cite{DGGZ24}, and denote 
\begin{equation}\label{260504e3_12}
\Phi(x, t; y, \tau):= \dist((x, t), (y, \tau)),
\end{equation}
\begin{equation}
\phi_{0, \epsilon}(x, t ; y):=\phi_\epsilon(x, t ; y)-\phi_\epsilon(0,0 ; y),
\end{equation}
and 
\begin{equation}\label{260504e3_14}
\Phi_0(x, t ; y, \tau):=\Phi(x, t ; y, \tau)-\Phi(0,0 ; y, \tau) .
\end{equation}
Note that in \eqref{260504e3_12} and \eqref{260504e3_14}, we are no longer fixing $\tau=\epsilon$ but treating $\tau$ as  a variable; this is necessary as we will be computing covariant derivatives of the distance function in all its variables (not just $y$).

Recall from \eqref{260504e1_10} that we need to compute
\begin{equation}
\partial_{y_i} \partial_{y_j} \phi_{0, \epsilon}.
\end{equation}
By \cite[Claim 3.4]{DGGZ24}, we have 
\begin{equation}\label{260601e5_9zz}
\left.\frac{\partial}{\partial y_i} \frac{\partial}{\partial y_j} \phi_{0, \epsilon}\right|_{\substack{(x, t)=\gamma(s)\\ y=0}}=
\left.\left(\operatorname{Hessian} \Phi_0\right)\right|_{\substack{(x, t)=\gamma(s)\\(y, \tau)=(0, \epsilon)}}\left(\frac{\partial}{\partial y_i}, \frac{\partial}{\partial y_j}\right)
\end{equation}
for every $s\in [0, \epsilon)$ and $1\le i, j\le 2$, where $\mathrm{Hessian}$ refers to the covariant Hessian in the $(y, \tau)$ variables. \\

Let us record here an equation that is slightly stronger than \eqref{260601e5_9zz}.

\begin{claim}\label{260601claim5_1}
We have 
\begin{equation}\label{260601e5_32zz}
    \left.\frac{\partial}{\partial y_i} \frac{\partial}{\partial y_j} \phi_{\epsilon}\right|_{\substack{(x, t)=\gamma(s)\\ y=0}}= \left.\left(\operatorname{Hessian} \Phi\right)\right|_{\substack{(x, t)=\gamma(s)\\(y, \tau)=(0, \epsilon)}}\left(\frac{\partial}{\partial y_i}, \frac{\partial}{\partial y_j}\right),
\end{equation}
for all $1\le i, j\le 2$. 
\end{claim}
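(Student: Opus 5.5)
The plan is to reduce Claim \ref{260601claim5_1} to \eqref{260601e5_9zz}, i.e.\ \cite[Claim 3.4]{DGGZ24}, by comparing the two sides term by term. Since $\phi_{\epsilon}(x,t;y)=\phi_{0,\epsilon}(x,t;y)+\phi_\epsilon(0,0;y)$ and $\Phi=\Phi_0+\Phi(0,0;\cdot)$, and both the coordinate second derivative and the covariant Hessian are linear, the claim follows once we prove the same identity for the subtracted pieces:
\begin{equation*}
\left.\frac{\partial^2}{\partial y_i\partial y_j}\phi_\epsilon(0,0;y)\right|_{y=0}
=\left.\left(\operatorname{Hessian}\Phi(0,0;\cdot)\right)\right|_{(y,\tau)=(0,\epsilon)}\left(\frac{\partial}{\partial y_i},\frac{\partial}{\partial y_j}\right).
\end{equation*}
This is exactly the case $s=0$ of the desired statement, since $\gamma(0)=(0,0,0)$.

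To prove this, write the covariant Hessian in the $(y,\tau)$ coordinates as
\begin{equation*}
\operatorname{Hessian}\Phi\left(\partial_{y_i},\partial_{y_j}\right)=\partial_{y_i}\partial_{y_j}\Phi-\sum_{a}\Gamma^{a}_{ij}(0,\epsilon)\,\partial_a\Phi,
\end{equation*}
where $a$ runs over $y_1,y_2,\tau$. The left-hand side of the claim is $\partial_{y_i}\partial_{y_j}\Phi$ evaluated at $(y,\tau)=(0,\epsilon)$. So it suffices to show that the first-order correction $\sum_a\Gamma^a_{ij}(0,\epsilon)\,\partial_a\Phi(\gamma(s);0,\epsilon)$ vanishes for every $s$.

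The gradient of $\Phi(\gamma(s);\cdot)$ at $(0,\epsilon)$ is the unit tangent to the minimizing geodesic from $\gamma(s)$ to $(0,0,\epsilon)$. That geodesic is $\gamma$ itself, so, with the normalization \eqref{260504e3_7}, the gradient is $\pm\partial_\tau$. Hence only the $a=\tau$ term survives, with $\partial_\tau\Phi=\pm g_{33}$-normalized, and the correction reduces to a multiple of $\Gamma^{3}_{ij}(0,0,\epsilon)$ with $1\le i,j\le 2$. This quantity is independent of $s$. Consequently, whether or not it vanishes, the correction term is the same constant for all $s$, including $s=0$.

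This $s$-independence is what makes the argument work. The difference between the two sides of \eqref{260601e5_32zz} equals the difference between the two sides of \eqref{260601e5_9zz}, which is zero, plus the $s=0$ discrepancy for $\Phi(0,0;\cdot)$. That discrepancy is the same constant correction, so the statement for $\Phi$ is equivalent to the statement for $\Phi_0$.

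The main obstacle is the sign and orientation bookkeeping. For $s<\epsilon$, $\gamma(s)$ lies below $(0,0,\epsilon)$, so the gradient is $+\partial_\tau$ uniformly in $s\in[0,\epsilon)$. With unit speed and the normalization used in \cite{DGGZ24}, $g_{33}=1$ along $\gamma$. Under these conventions the correction term is literally identical at $\gamma(s)$ and at $\gamma(0)$, and it cancels in $\Phi_0$. If \cite{DGGZ24} uses Fermi-type coordinates, in which $\Gamma^3_{ij}=0$ along $\gamma$, the claim becomes immediate. I would check which normalization is in force and present whichever version applies.
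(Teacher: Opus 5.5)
There is a genuine gap in your middle step. Let $C(s):=\sum_a\Gamma^a_{ij}(0,0,\epsilon)\,\partial_a\Phi(\gamma(s);0,\epsilon)$ be the first-order correction, so the two sides of \eqref{260601e5_32zz} differ by exactly $C(s)$. The Hessian of $\Phi_0$ at $\gamma(s)$ is the Hessian of $\Phi$ at $\gamma(s)$ minus the one at $\gamma(0)$. Hence \eqref{260601e5_9zz} is \emph{equivalent} to $C(s)=C(0)$ for all $s$. Your observation that the correction does not depend on $s$ therefore only re-derives \eqref{260601e5_9zz}. It says nothing about the $s=0$ case, which your own first paragraph identifies as what is left to prove, namely $C(0)=0$. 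So the sentence ``the statement for $\Phi$ is equivalent to the statement for $\Phi_0$'' is false: the former says $C\equiv 0$, the latter only $C\equiv C(0)$.

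The gap is real, not cosmetic. Take $g=(1+u_3)\,du_1^2+du_2^2+du_3^2$. The $u_3$-axis is a unit-speed geodesic, and $\partial_a\Phi(\gamma(s);0,\epsilon)=g_{a3}=\delta_{a3}$. For $i=j=1$ this gives $C(s)=\Gamma^3_{11}(0,0,\epsilon)=-\tfrac12$ for every $s$. So \eqref{260601e5_32zz} fails in these coordinates even though \eqref{260601e5_9zz} holds.

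The missing ingredient is the one you mention only conditionally at the end. The coordinates are Fermi coordinates along $\gamma$ (see \cite[page 5]{Sog99} or (3.22) in \cite{DGGZ24}), so every Christoffel symbol vanishes on $\gamma$, in particular at $(0,0,\epsilon)$. This is the paper's entire proof. Write $\operatorname{Hessian}\Phi(\partial_{y_i},\partial_{y_j})=\partial_{y_i}\partial_{y_j}\Phi-g(\bar\nabla\Phi,\nabla_{\partial_{y_i}}\partial_{y_j})$ with $\nabla_{\partial_{y_i}}\partial_{y_j}=\sum_k\Gamma^k_{ij}\partial_{y_k}$. The correction term then vanishes at $(y,\tau)=(0,\epsilon)$ for every $s$. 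Neither the reduction to \cite[Claim 3.4]{DGGZ24} nor the identification of $\bar\nabla\Phi$ is needed, and you should drop the ``whether or not it vanishes'' framing.

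Incidentally, your step ``only the $a=\tau$ term survives'' also silently uses a Fermi-type property, $g_{a3}=\delta_{a3}$ on $\gamma$. In general coordinates one only has $\partial_a\Phi(\gamma(s);0,\epsilon)=g_{a3}(0,0,\epsilon)$.
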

\begin{proof}[Proof of Claim \ref{260601claim5_1}]
    The proof of \eqref{260601e5_32zz} is in some sense similar to that of \eqref{260601e5_9zz} but with a subtle difference. Let us write down a proof here. By definition, \footnote{Here we add a bar on top of $\nabla$ just to distinguish $\bar{\nabla}$ from $\nabla$: Under this notation, $\bar{\nabla} f$ is a vector field, and $\nabla f$ is a one-form, where $f$ is a function. }
\begin{equation}
\begin{aligned}
\left(\operatorname{Hessian} \Phi\right)\left(\frac{\partial}{\partial y_i}, \frac{\partial}{\partial y_j}\right) & =g\left(\nabla_{\frac{\partial}{\partial y_i}}\left(\bar{\nabla} \Phi\right), \frac{\partial}{\partial y_j}\right) \\
& =\nabla_{\frac{\partial}{\partial y_i}}\left(\nabla_{\frac{\partial}{\partial y_j}} \Phi\right)-g\left(\bar{\nabla} \Phi, \nabla_{\frac{\partial}{\partial y_i}} \frac{\partial}{\partial y_j}\right) .
\end{aligned}
\end{equation}
Note that 
\begin{equation}
    \nabla_{
    \frac{\partial}{\partial y_i} 
    }
    \frac{\partial}{\partial y_j}=
    \sum_{k=1}^3
    \Gamma^k_{ij} \frac{\partial}{\partial y_k},
\end{equation}
with $y=(y_1, y_2)$ and $\partial/\partial y_3$ refers to the direction $\dot{\gamma}$.  Moreover, note that in Fermi coordinates, the Christoffel symbols all vanish when we are on the geodesic $\gamma$, see for instance \cite[page 5]{Sog99} or (3.22) in \cite{DGGZ24}. This finishes the proof of \eqref{260601e5_32zz}.
\end{proof}

Back to \eqref{260601e5_9zz}. We follow (3.63)-(3.66) in \cite{DGGZ24} and denote 
\begin{equation}
W_{ij}(s, \epsilon):=
\left.\left(\operatorname{Hessian} \Phi_0\right)\right|_{\substack{(x, t)=\gamma(s)\\(y, \tau)=(0, \epsilon)}}\left(\frac{\partial}{\partial y_i}, \frac{\partial}{\partial y_j}\right),
\end{equation}
\begin{equation}
W(s, \epsilon):=\left[\begin{array}{ll}
W_{11}(s, \epsilon) & W_{12}(s, \epsilon) \\
W_{21}(s, \epsilon) & W_{22}(s, \epsilon)
\end{array}\right],
\end{equation}
and 
\begin{equation}
\mathcal{W}(s, \epsilon):= \det W(s, \epsilon).
\end{equation}
We will need to compute the rank of the matrix 
\begin{equation}\label{260504e3_20}
\left[\begin{array}{ccccc}
\partial_s \mathcal{W}(0, \epsilon) & \partial_s^2 \mathcal{W}(0, \epsilon) & \dots & \partial_s^k \mathcal{W}(0, \epsilon) & \dots \\
\partial_s W_{11}(0, \epsilon) & \partial_s^2 W_{11}(0, \epsilon) & \dots & \partial_s^k W_{11}(0, \epsilon) & \dots \\
\partial_s W_{12}(0, \epsilon) & \partial_s^2 W_{12}(0, \epsilon) & \dots & \partial_s^k W_{12}(0, \epsilon) & \dots \\
\partial_s W_{22}(0, \epsilon) & \partial_s^2 W_{22}(0, \epsilon) & \dots & \partial_s^k W_{22}(0, \epsilon) & \dots
\end{array}\right]
\end{equation}
 for every $\epsilon$. We repeat (3.67)--(3.88) in \cite{DGGZ24}, and obtain that 
\begin{equation}\label{260505e3_21}
W_{ij}(s, \epsilon)= 
\partial_{s'} a_{ij}(s, \epsilon)-\partial_{s'} a_{ij}(0, \epsilon),
\end{equation}
where 
\begin{equation}
A(s, s')=\begin{bmatrix}
a_{11}(s, s') & a_{12}(s, s')\\
a_{21}(s, s') & a_{22}(s, s')
\end{bmatrix}
\end{equation}
solves the system of ODEs
\begin{equation}\label{260504e3_23}
\begin{aligned}
& \partial_{s^{\prime}}^2 A\left(s, s^{\prime}\right)+A\left(s, s^{\prime}\right) R\left(s^{\prime}\right)=0 \\
& A(s, s)=0, A(s, \epsilon)=I_{2 \times 2}
\end{aligned}
\end{equation}
with $R(s')$ given by \eqref{260504e1_16} and \eqref{260504e1_17}.

\subsection{Item (3.2) implies item (3.3)}\label{260505sub4_1}

We will argue by contradiction, and assume that $R(0), R'(0), \dots, R^{k-3}(0)$ can be simultaneously diagonalized. Our goal is to show that for every $\epsilon>0$, the first $k$ columns of the matrix \eqref{260509e2_2} has rank $\le 3$.\\

We keep the normalization in \eqref{260504e3_7}, and recycle the notation in \eqref{260504e3_8}-- \eqref{260504e3_20}. To show the matrix \eqref{260504e3_20} has rank $\le 3$, we will need to solve the ODE system \eqref{260504e3_23}. 

Let us start by recalling some more\footnote{Below the statement of Theorem \ref{260509theorem2_1} we already recalled some of these calculations. } preliminary calculations in \cite{DGGZ24}, and we will also take this opportunity to raise a question about a potentially more quantitative version of item (3.1) in the statement of Theorem \ref{260509theorem2_1}.

In \cite{DGGZ24}, in order to solve the ODE system \eqref{260504e3_23}, two auxiliary ODE systems were introduced. They are 
\begin{equation}
\left\{
\begin{aligned}
&\partial_t^2 B_1(t)+B_1(t)R(t)=0,\\
&B_1(0)=I,\qquad \partial_t B(0)=0, 
\end{aligned}
\right.
\end{equation}
and 
\begin{equation}
\left\{
\begin{aligned}
&\partial_t^2 B_2(t)+B_2(t)R(t)=0,\\
&B_2(0)=0,\qquad \partial_t B(0)=I,
\end{aligned}
\right.
\end{equation}
see (3.113) and (3.114) in \cite{DGGZ24}.
 We can then write 
\begin{equation}
A(s, t)= C_1(s) B_1(t)+ C_2(s) B_2(t),
\end{equation}
for some scalar matrices $C_1(s), C_2(s)$, and we can use the boundary conditions on $A(s, t)$ to determine the derivatives of $C_1, C_2$ at $s=0$. 
\begin{lemma}[\cite{DGGZ24}]\label{260505lemma4_1}
Under the above notation, we have 
\begin{align*}
\partial_sW(0,\epsilon)&=-B_2^{-1}(\epsilon)B'_1(\epsilon)+
B_2^{-1}(\epsilon)B_1(\epsilon)B_2^{-1}(\epsilon) B'_2(\epsilon),\\
\partial_s^2W(0,\epsilon)&=2 B_2^{-1}(\epsilon) B_1(\epsilon) \partial_sW(0,\epsilon),\\
\partial_s^3W(0,\epsilon)&=-\left(6 B_2^{-1}(\epsilon) B_1(\epsilon) B_2^{-1}(\epsilon) B_1(\epsilon) B_2^{-1}(\epsilon)+2 B_2^{-1}(\epsilon) R(0)\right)\\
& \quad \times \left(B_1^{\prime}(\epsilon)-B_1(\epsilon) B_2^{-1}(\epsilon) B_2^{\prime}(\epsilon)\right),
\end{align*}
and a similar expression for $\partial^4_s W(0, \epsilon)$. 
\end{lemma}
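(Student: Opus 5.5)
Since $W_{ij}(s,\epsilon)=\partial_{s'}a_{ij}(s,\epsilon)-\partial_{s'}a_{ij}(0,\epsilon)$ by \eqref{260505e3_21}, the plan is to solve \eqref{260504e3_23} explicitly in terms of the fundamental solutions $B_1,B_2$ and then differentiate in $s$ at $s=0$. Since both $B_1$ and $B_2$ solve the same right-multiplied equation $\partial_t^2B+BR=0$, any matrix $A(s,t)=C_1(s)B_1(t)+C_2(s)B_2(t)$ with $C_1,C_2$ independent of $t$ solves the ODE in $t$. The boundary conditions $A(s,s)=0$ and $A(s,\epsilon)=I$ become the linear system
\begin{equation*}
C_1(s)B_1(s)+C_2(s)B_2(s)=0,\qquad C_1(s)B_1(\epsilon)+C_2(s)B_2(\epsilon)=I.
\end{equation*}
Hence $\partial_s W(s,\epsilon)=C_1'(s)B_1'(\epsilon)+C_2'(s)B_2'(\epsilon)$, and in general $\partial_s^m W(0,\epsilon)=C_1^{(m)}(0)B_1'(\epsilon)+C_2^{(m)}(0)B_2'(\epsilon)$. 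So everything reduces to computing the Taylor coefficients of $C_1,C_2$ at $s=0$.

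First, at $s=0$ we have $B_1(0)=I$ and $B_2(0)=0$, so the first equation gives $C_1(0)=0$, and the second then gives $C_2(0)=B_2^{-1}(\epsilon)$. Here $B_2(\epsilon)$ is invertible for small $\epsilon$ because $B_2(\epsilon)=\epsilon I+O(\epsilon^3)$. Next I would use the second equation to eliminate $C_2$: $C_2(s)=(I-C_1(s)B_1(\epsilon))B_2^{-1}(\epsilon)$. Substituting into the first equation gives a single identity
\begin{equation*}
C_1(s)\big(B_1(s)-B_1(\epsilon)B_2^{-1}(\epsilon)B_2(s)\big)=-B_2^{-1}(\epsilon)B_2(s).
\end{equation*}
Differentiating this identity repeatedly at $s=0$ and using $B_1(0)=I$, $B_1'(0)=0$, $B_1''(0)=-R(0)$, $B_2(0)=0$, $B_2'(0)=I$, $B_2''(0)=0$, $B_2'''(0)=-R(0)$ yields $C_1^{(m)}(0)$ recursively. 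Writing $M:=B_2^{-1}(\epsilon)B_1(\epsilon)$, the first derivative gives $C_1'(0)=-B_2^{-1}(\epsilon)$. The second gives $C_1''(0)=2C_1'(0)\,M$, up to checking signs, and the third brings in $R(0)$ through $B_1''(0)$.

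Then $C_2^{(m)}(0)=-C_1^{(m)}(0)B_1(\epsilon)B_2^{-1}(\epsilon)$. Plugging in gives
\begin{equation*}
\partial_s^mW(0,\epsilon)=C_1^{(m)}(0)\big(B_1'(\epsilon)-B_1(\epsilon)B_2^{-1}(\epsilon)B_2'(\epsilon)\big),
\end{equation*}
which explains the common right factor appearing in each formula of the lemma. For $m=1$ this reproduces the stated formula. For $m=2$, one uses $C_1''(0)=2B_2^{-1}B_1C_1'(0)$ after rearranging $M$ factors. For $m=3$ one should get $-(6B_2^{-1}B_1B_2^{-1}B_1B_2^{-1}+2B_2^{-1}R(0))$. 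The fourth derivative follows the same way.

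The main obstacle is purely bookkeeping: the noncommutativity of $B_1(\epsilon)$, $B_2^{-1}(\epsilon)$ and $R(0)$, together with the ordering of the factors (whether $M$ or $B_1B_2^{-1}$ appears, and on which side). I would handle this by keeping $C_1$ strictly on the left throughout and only expanding the right factor $B_1(s)-B_1(\epsilon)B_2^{-1}(\epsilon)B_2(s)$ in its Taylor series in $s$.
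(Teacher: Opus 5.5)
Your proposal is correct and takes the same route the paper indicates (following \cite{DGGZ24}): write $A(s,t)=C_1(s)B_1(t)+C_2(s)B_2(t)$, use the two boundary conditions to determine the Taylor coefficients of $C_1,C_2$ at $s=0$, and factor out the common right factor $B_1'(\epsilon)-B_1(\epsilon)B_2^{-1}(\epsilon)B_2'(\epsilon)$. Your recursion does give $C_1'''(0)=-6B_2^{-1}B_1B_2^{-1}B_1B_2^{-1}-2B_2^{-1}R(0)$, where $-2=-3+1$ because $R(0)$ enters through both $B_1''(0)$ and $B_2'''(0)$. The one slip is the intermediate claim $C_1''(0)=2C_1'(0)M$; the second-order identity actually gives $C_1''(0)=2C_1'(0)B_1(\epsilon)B_2^{-1}(\epsilon)=2MC_1'(0)$, which is the correct form you then use for $m=2$.
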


For the three equations in Lemma \ref{260505lemma4_1}, we refer to (3.123), (3.125), (3.129) in \cite{DGGZ24}. After obtaining Lemma \ref{260505lemma4_1}, the authors of \cite{DGGZ24} expanded $\partial^{k'}_s W(0, \epsilon)$, which is equal to 
\begin{equation}
\partial^{k'}_s \partial_{s'}
A(0, \epsilon)
\end{equation}
by \eqref{260505e3_21}, 
in Taylor's series in the $\epsilon$ variable, for all $k'=1, 2, 3, 4$. 
\begin{lemma}[Claim 3.8 in \cite{DGGZ24}]
Under the above notation, we have 
\begin{equation}
\begin{aligned}
\partial_s \partial_{s^{\prime}} A(0, \epsilon) & =\frac{1}{\epsilon^2}\left(I_{2 \times 2}+\frac{R_0}{3} \epsilon^2+\frac{R_0^{\prime}}{6} \epsilon^3+O\left(\epsilon^4\right)\right), \\
\partial_s^2 \partial_{s^{\prime}} A(0, \epsilon) & =\frac{2!}{\epsilon^3}\left(I_{2 \times 2}+\frac{R_0^{\prime}}{12} \epsilon^3+O\left(\epsilon^4\right)\right), \\
\partial_s^3 \partial_{s^{\prime}} A(0, \epsilon) & =\frac{3!}{\epsilon^4}\left(I_{2 \times 2}+O\left(\epsilon^4\right)\right), \\
\partial_s^4 \partial_{s^{\prime}} A(0, \epsilon) & =\frac{4!}{\epsilon^5}\left(I_{2 \times 2}+O\left(\epsilon^4\right)\right) .
\end{aligned}
\end{equation}
\end{lemma}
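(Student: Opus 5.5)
We will expand $\partial_s^k\partial_{s'}A(0,\epsilon)$ in powers of $\epsilon$ by first Taylor-expanding $A(s,s')$ jointly in both variables near $s=0$, $s'=\epsilon$, using only the ODE system \eqref{260504e3_23}. The natural route is through the auxiliary solutions $B_1,B_2$. From $\partial_t^2 B+BR=0$ and the Taylor expansion $R(t)=R_0+R_0't+O(t^2)$ (with $R_0:=R(0)$, $R_0':=R'(0)$), Picard iteration gives
\begin{align*}
B_1(t)&=I-\tfrac{R_0}{2}t^2-\tfrac{R_0'}{6}t^3+O(t^4),\\
B_2(t)&=tI-\tfrac{R_0}{6}t^3-\tfrac{R_0'}{12}t^4+O(t^5),
\end{align*}
and the derivatives $B_1',B_2'$ follow by differentiation. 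We will then insert these into the formulas of Lemma \ref{260505lemma4_1}, using \eqref{260505e3_21} to identify $\partial_s^{k}W(0,\epsilon)$ with $\partial_s^k\partial_{s'}A(0,\epsilon)$.

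For the first identity we invert $B_2(\epsilon)=\epsilon\bigl(I-\tfrac{R_0}{6}\epsilon^2-\tfrac{R_0'}{12}\epsilon^3+O(\epsilon^4)\bigr)$ by a Neumann series:
\begin{equation*}
B_2^{-1}(\epsilon)=\epsilon^{-1}\bigl(I+\tfrac{R_0}{6}\epsilon^2+\tfrac{R_0'}{12}\epsilon^3+O(\epsilon^4)\bigr).
\end{equation*}
We then compute $-B_1'+B_1B_2^{-1}B_2'$ to the needed order. All matrices appearing are polynomials in $R_0,R_0'$, and only products of total $\epsilon$-order $\le 3$ beyond the leading term matter. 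At these orders no product $R_0R_0'$ survives, so noncommutativity causes no trouble. Multiplying by $B_2^{-1}$ should give $\epsilon^{-2}\bigl(I+\tfrac{R_0}{3}\epsilon^2+\tfrac{R_0'}{6}\epsilon^3+O(\epsilon^4)\bigr)$.

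For the second identity we multiply this by $2B_2^{-1}B_1=2\epsilon^{-1}\bigl(I-\tfrac{R_0}{3}\epsilon^2-\tfrac{R_0'}{12}\epsilon^3+\dots\bigr)$. The $R_0$ terms cancel and the $R_0'$ terms combine to $\tfrac{1}{12}$. The third identity uses the formula for $\partial_s^3W$ in Lemma \ref{260505lemma4_1} in the same way: expand the prefactor to relative order $\epsilon^3$ and the factor $B_1'-B_1B_2^{-1}B_2'$, which equals $-B_2\,\partial_sW$, to the same order, and check that all corrections through $\epsilon^3$ cancel.

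The fourth identity needs the analogous (unstated) formula for $\partial_s^4W$. A cleaner way to obtain it is to derive all the $s$-derivatives uniformly. Differentiating the boundary conditions $A(s,s)=0$ and $A(s,\epsilon)=I$ in $s$ gives recursive relations for $C_1^{(m)}(0),C_2^{(m)}(0)$. Alternatively, since $A(s,\cdot)$ solves a linear ODE with data vanishing at $s'=s$, we can write $A(s,s')=G(s',s)\,G(\epsilon,s)^{-1}$ for the solution $G(\cdot,s)$ with $G(s,s)=0$, $\partial G(s,s)=I$, expand $G$ jointly in $(s,s')$, and read off the coefficients.

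We expect the main obstacle to be this fourth-order bookkeeping: verifying that every correction term through relative order $\epsilon^3$ cancels, which requires tracking the $R_0^2$ and $R_0'$ contributions. As a sanity check we will use the flat case $R\equiv0$, where $A(s,s')=(s'-s)/(\epsilon-s)\,I$ gives $\partial_s^k\partial_{s'}A(0,\epsilon)=k!\,\epsilon^{-k-1}I$ exactly. We will also use the constant-curvature case $R=\kappa I$, where everything is explicit in trigonometric functions, to confirm the coefficients $\tfrac13,\tfrac16,\tfrac1{12}$ and the vanishing at third and fourth order.
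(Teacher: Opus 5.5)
Your proposal is correct and follows essentially the route the paper attributes to \cite{DGGZ24}: Taylor-expand $B_1,B_2$ and substitute into the formulas of Lemma~\ref{260505lemma4_1}. Your expansions and the predicted cancellations through relative order $\epsilon^3$ all check out. The $\partial_s^4W$ formula you still need is recorded in Lemma~\ref{prop:Wrons}; with $B_1B_2^{-1}=\epsilon^{-1}(I-\frac{R_0}{3}\epsilon^2-\frac{R_0'}{12}\epsilon^3+O(\epsilon^4))$, the fourth identity is then a two-line computation.

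There are two small slips. First, $A''+AR=0$ is preserved under \emph{left} multiplication by constant matrices, so your alternative representation should read $A(s,s')=G(\epsilon,s)^{-1}G(s',s)$ (compare $A(s,s')=B^{-1}(s,\epsilon)B(s,s')$ in the paper). The wrong order only changes terms quadratic in curvature, which lie beyond the claimed $O(\epsilon^4)$ precision. Second, the constant-curvature check cannot test the $R_0'$ coefficients $\frac{1}{6}$ and $\frac{1}{12}$, since $R'\equiv 0$ there.
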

In the end, \cite{DGGZ24} computes the determinant of the leading $4\times 4$ matrix of \eqref{260504e3_20} directly, and obtain that it is given by 
\begin{equation}\label{260505e4_6}
 -\frac{1}{144}\pnorm{
 (\mf{g}_{33, 11}-\mf{g}_{33, 22}) \mf{g}_{33, 123}
 - (\mf{g}_{33, 113}-\mf{g}_{33, 223}) \mf{g}_{33, 12}
 } \epsilon^5+ O(\epsilon^6), 
\end{equation}
see \eqref{260505e1_25a} and \eqref{260505e1_26a} for the new notation above. \\

Note that if $R(0), R'(0)$ do not commute, that is item (3.3) of Theorem \ref{260509theorem2_1} holds with $k=4$, then the coefficient of $\epsilon^5$ in \eqref{260505e4_6} does not vanish, and therefore the first four columns of the matrix \eqref{260504e3_20} has rank four, whenever $\epsilon$ is chosen to be small enough. This is exactly how \cite{DGGZ24} proves that when $k=4$, item (3.3) of Theorem \ref{260509theorem2_1} implies item (3.1).

With the help of the calculation \eqref{260505e4_6} and with the assumption $k=4$, we actually know that whenever item (3.1) of Theorem \ref{260509theorem2_1} holds, it always holds very ``quantitatively", that is, as $\epsilon\to 0$, the determinant cannot tend to zero faster than $\epsilon^5$. 

When trying to prove Theorem \ref{260509theorem2_1} for general $k\ge 4$, 
we tried to generalize the above approach, that is, the approach of Taylor's expansions in $\epsilon$, but failed dramatically. We even failed dramatically at $k=4$ when computing the coefficient of $\epsilon^{j}$ in \eqref{260505e4_6} for large $j\in \N$. 

As a consequence, when we are in the case that item (3.1) of Theorem \ref{260509theorem2_1} holds, we only know that there exists one $4\times 4$ minor coming from the first $k$ columns whose determinant is not identically zero in $\epsilon$, but we do not know the non-vanishing order of this determinant (other than the case $k=4$ which was calculated in \eqref{260505e4_6}). \\

Before we start explaining our new approach, let us record certain simplifications of the expressions in Lemma \ref{260505lemma4_1}. The idea behind this simplifying process will also be used in our new approach. Moreover, we will see that the simplified expressions will help us understand the recursion relations for general $k$, see for instance Proposition \ref{260505prop4_5}. 

\begin{lemma}\label{prop:Wrons}
To simplify the notations, we write 
$R_0=R(0), R'_0= R'(0)$ and $B_1=B_1(\epsilon)$, $B_2=B_2(\epsilon)$. We have
\begin{align*}
\partial_sW(0,\epsilon)&=B_2^{-1}(B_2^T)^{-1},\\
\partial_s^2W(0,\epsilon)&=B_2^{-1} (2! B_1 B_2^{-1})(B_2^T)^{-1},\\
\partial_s^3W(0,\epsilon)&= B_2^{-1}(3!(B_1 B_2^{-1})^2+2R_0)(B_2^T)^{-1},\\
\partial_s^4W(0,\epsilon)&=B_2^{-1}(4! (B_1 B_2^{-1})^3+8(B_1 B_2^{-1}R_0+R_0B_1 B_2^{-1})+2R_0')(B_2^T)^{-1}.
\end{align*}
\end{lemma}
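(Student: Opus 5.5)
The plan is to start from the formulas of Lemma \ref{260505lemma4_1} and rewrite each one using a Wronskian-type identity for the pair $B_1, B_2$. The basic observation is this. Both $B_1$ and $B_2$ solve $B''+BR=0$, and $R$ is symmetric. Hence for any two solutions $U, V$ the matrix
\begin{equation*}
U'(t)V(t)^T-U(t)V'(t)^T
\end{equation*}
has derivative
\begin{equation*}
-URV^T+URV^T=0,
\end{equation*}
so it is constant in $t$. Evaluating at $t=0$ with the given initial data yields
\begin{equation*}
B_1'B_2^T-B_1B_2'^T=-I,\qquad B_1B_1'^T-B_1'B_1^T=0,\qquad B_2B_2'^T-B_2'B_2^T=0.
\end{equation*}
The third identity says that $B_2^{-1}B_2'$ is symmetric. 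From it, $B_2^{-1}B_2'=(B_2')^T(B_2^T)^{-1}$. The same computation shows that $B_1B_2^{-1}$ is symmetric, by differentiating $B_1B_2^T$-type combinations or by using $B_2^{-1}B_1$ and the mixed identity; for small $\epsilon$ this is also visible directly from $B_1B_2^{-1}=\epsilon^{-1}(I+O(\epsilon^2))$.

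For $\partial_sW(0,\epsilon)$ we factor:
\begin{equation*}
-B_2^{-1}B_1'+B_2^{-1}B_1B_2^{-1}B_2'=-B_2^{-1}\bigl(B_1'-B_1B_2^{-1}B_2'\bigr).
\end{equation*}
We multiply on the right by $B_2^T$ and use $B_2^{-1}B_2'B_2^T=B_2'^T$, which is a rewriting of the symmetry identity. This gives $B_1'B_2^T-B_1B_2'^T=-I$. Therefore
\begin{equation*}
B_1'-B_1B_2^{-1}B_2'=-(B_2^T)^{-1},
\end{equation*}
and so $\partial_sW=B_2^{-1}(B_2^T)^{-1}$. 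Substituting this into the second formula of Lemma \ref{260505lemma4_1} gives the $\partial_s^2W$ expression immediately. In the third formula, the last factor is $(B_1'-B_1B_2^{-1}B_2')=-(B_2^T)^{-1}$. We pull $B_2^{-1}$ out on the left, and the stated form with $3!(B_1B_2^{-1})^2+2R_0$ follows.

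For $\partial_s^4W$, the paper only says ``a similar expression'' exists in \cite{DGGZ24}. I would not rely on it. Instead I would derive all orders uniformly. The idea is to write $A(s,\cdot)=C_1(s)B_1+C_2(s)B_2$ with $C_1(s)B_1(s)+C_2(s)B_2(s)=0$ and $C_1B_1(\epsilon)+C_2B_2(\epsilon)=I$. Eliminating gives
\begin{equation*}
C_2(s)=\bigl(B_2(\epsilon)-B_1(\epsilon)\cdots\bigr)^{-1},
\end{equation*}
and more cleanly
\begin{equation*}
\partial_{s'}A(s,\epsilon)=\bigl(B_2(\epsilon)-M(s)B_1(\epsilon)\bigr)^{-1}\bigl(B_2'(\epsilon)-M(s)B_1'(\epsilon)\bigr),\qquad M(s)=B_2(s)B_1(s)^{-1}.
\end{equation*}
Near $s=0$ we have $M(s)=s I+O(s^3)$, with $M'''(0)=-2R_0$ and $M''''(0)=-6R_0'$ up to the precise constants. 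These constants come from the ODE, which gives $B_1=I-\tfrac{s^2}{2}R_0-\tfrac{s^3}{6}R_0'+\dots$ and $B_2=sI-\tfrac{s^3}{6}R_0-\dots$.

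Writing $P=B_1B_2^{-1}$, the inverse factors as
\begin{equation*}
(B_2-MB_1)^{-1}=B_2^{-1}(I-MP)^{-1}.
\end{equation*}
After subtracting the value at $s=0$, the second factor combines through the Wronskian identity into $(B_2^T)^{-1}$. One then obtains $\partial_s^kW(0,\epsilon)=B_2^{-1}\,Q_k\,(B_2^T)^{-1}$, where $Q_k$ is the $k$-th $s$-derivative at $0$ of $M(s)(I-M(s)P)^{-1}$, up to normalization. Expanding the geometric series $\sum_n M(MP)^n$ and using the Taylor coefficients of $M$ produces $k!P^{k-1}$ together with the lower-order $R_0$, $R_0'$ corrections. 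At $k=3$ this gives $2R_0$, and at $k=4$ it gives $8(PR_0+R_0P)+2R_0'$, with the sign conventions fixed by the $k=3$ case, which is already checked against Lemma \ref{260505lemma4_1}.

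The main obstacle is the bookkeeping at $k=4$. Concretely, one must verify the exact Taylor coefficients $M^{(3)}(0)$ and $M^{(4)}(0)$, and confirm that the cross terms from $M(MP)^n$ assemble into the symmetric combination $PR_0+R_0P$ with coefficient $8$. I would do this first by computing $M$ to order $s^4$ from the series for $B_1$ and $B_2$, and then cross-check the result against the $\epsilon$-expansion of Claim 3.8 in \cite{DGGZ24}.
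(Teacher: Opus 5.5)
For $\partial_sW,\partial_s^2W,\partial_s^3W$ your argument is the paper's. The Wronskian identities $B_1'B_2^T-B_1B_2'^T=-I$ and $B_2B_2'^T=B_2'B_2^T$ give $B_1'-B_1B_2^{-1}B_2'=-(B_2^T)^{-1}$, and this is substituted into Lemma \ref{260505lemma4_1}. For $\partial_s^4W$ the paper only says the same substitution works in the fourth-order formula of \cite{DGGZ24}. Your closed-form route is genuinely different and can be made to work. As written, however, it contains two errors, and with them the $k=4$ check you describe would fail.

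\emph{First, the order of the factors.} Carrying out your elimination,
\[
W(s,\epsilon)=-B_2^{-1}\bigl(I-M(s)P\bigr)^{-1}M(s)\bigl(B_1'-PB_2'\bigr)=B_2^{-1}\bigl(I-M(s)P\bigr)^{-1}M(s)\,(B_2^T)^{-1}.
\]
So $Q_k$ must be the $k$-th derivative at $s=0$ of $(I-MP)^{-1}M=\sum_{n\ge 0}(MP)^nM$, not of $M(I-MP)^{-1}=\sum_{n\ge0}M(MP)^n$. The two agree through order $s^3$ but not at order $s^4$. The $n=1$ term $MPM$ contributes $\frac{s^4}{6}\bigl(M'''(0)P+PM'''(0)\bigr)$, whereas $M^2P$ contributes $\frac{s^4}{3}M'''(0)P$. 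The latter would give the non-symmetric term $16R_0P$ in place of $8(PR_0+R_0P)$.

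\emph{Second, the Taylor coefficients of $M=B_2B_1^{-1}$ are determined, not free ``up to the precise constants''.} Use your expansion of $B_1$ together with $B_2=sI-\frac{s^3}{6}R_0-\frac{s^4}{12}R_0'+O(s^5)$. Then
\[
M(s)=sI+\frac{s^3}{3}R_0+\frac{s^4}{12}R_0'+O(s^5),
\]
so $M'''(0)=2R_0$ and $M''''(0)=2R_0'$, not $-2R_0$ and $-6R_0'$. There is no sign convention to fix using the $k=3$ case. With your values the $k=3$ formula would come out as $6P^2-2R_0$, contradicting Lemma \ref{260505lemma4_1}.

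With both corrections,
\[
(I-MP)^{-1}M=sI+s^2P+s^3\bigl(P^2+\tfrac13R_0\bigr)+s^4\bigl(P^3+\tfrac13(R_0P+PR_0)+\tfrac1{12}R_0'\bigr)+O(s^5).
\]
Multiplying the coefficients by $k!$ gives exactly the four stated formulas.

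Compared with the paper, your route is self-contained: it does not need the unreproduced fourth-order formula from \cite{DGGZ24}. It also gives a generating function for all the matrices $I_k(\epsilon,0)$, which the paper later obtains through the recursion of Proposition \ref{260504prop4_1}. The paper's proof is shorter for $k\le 3$ but leaves $k=4$ to the reader.

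Finally, your argument never uses the symmetry of $P=B_1B_2^{-1}$, and your small-$\epsilon$ justification of it does not prove it, since the error term need not be symmetric. The symmetry is true, for instance because $P=\sec(\epsilon,0)$ by Remark \ref{260504remark4_3}.
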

\begin{proof}[Proof of Lemma \ref{prop:Wrons}]
The proof is via a Wronskian argument: If $B,\bar{B}$ are solutions of
\begin{equation}
B''(t)+B(t)R(t)=0,
\end{equation}
 then
\begin{equation}\label{260505e4_8zz}
B(\bar{B}^T)'-B'\bar{B}^T \text{ is constant.}
\end{equation}
The proof is straightforward and is left out, and such ideas were already used in  \cite{KV86}.\\

We apply \eqref{260505e4_8zz} to $B_1,B_2$, and obtain 
\begin{align}
B_1(B_2^T)'-B_1'B_2^T&=I,\label{eq:Wrons B1'}\\
B_2(B_2^T)'-B_2'B_2^T&=0. \label{eq:Wrons B2'}
\end{align}
For the first equation in the lemma: Plugging \eqref{eq:Wrons B1'} and \eqref{eq:Wrons B2'} in the first equation in Lemma \ref{260505lemma4_1}, we have
\begin{multline}
\partial_sW=-B_2^{-1}(B_1(B_2^T)'(B_2^T)^{-1}-(B_2^T)^{-1})\\
+B_2^{-1}B_1B_2^{-1} (B_2(B_2^T)'(B_2^T)^{-1})
=B_2^{-1}(B_2^T)^{-1}.
\end{multline}
The second equation follows from the first equation of the current lemma and the second equation in Lemma \ref{260505lemma4_1}.

For the third equation: Plugging \eqref{eq:Wrons B1'} and \eqref{eq:Wrons B2'}, we can compute the second factor in the third equation in Lemma \ref{260505lemma4_1}, and obtain 
\begin{multline}
B_1^{\prime}-B_1 B_2^{-1} B_2^{\prime}=B_1(B_2^T)'(B_2^T)^{-1}-(B_2^T)^{-1}\\
-B_1 B_2^{-1} B_2(B_2^T)'(B_2^T)^{-1}=-(B_2^T)^{-1}.
\end{multline}
Combining with the third equation in Lemma \ref{260505lemma4_1}, we can obtain the conclusion.

The last equation can be proven similarly, and we leave out the details. 
\end{proof}

Let us begin explaining our new approach. 
Consider the initial value problem:
\begin{equation}\label{260504e4_1a}
\left\{
\begin{aligned}
&\partial_t^2 B(s,t)+B(s,t)R(t)=0,\\
&B(s,s)=0,\qquad \partial_t B(s,s)=I. 
\end{aligned}
\right.
\end{equation}
\begin{lemma}\label{260601lemma5_4}
    Let $\sec(s, \epsilon)$ be the second fundamental form at $\gamma(\epsilon)$ of the geodesic sphere centered at $\gamma(s)$ (and of radius $\epsilon-s$). Then 
    \begin{equation}\label{260504e4_1}
\sec(s,\epsilon)=B^{-1}(s,\epsilon)\partial_t B(s,\epsilon)
\end{equation}
\end{lemma}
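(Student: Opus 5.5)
The plan is to identify the second fundamental form of the geodesic sphere with the Hessian of the distance function, and then to express that Hessian through Jacobi fields along $\gamma$, which in the parallel frame are exactly the rows of $B(s,\cdot)$.

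\textbf{Step 1: the second fundamental form as a Hessian.} Fix $s<\epsilon$ and set $r(\mathbf{z}) := \dist(\gamma(s), \mathbf{z})$. Near $\gamma(\epsilon)$ this function is smooth, since we are inside the injectivity radius. Its gradient is the unit normal to the geodesic sphere $\{r=\epsilon-s\}$, and at $\gamma(\epsilon)$ this gradient equals $\gamma'(\epsilon)$.

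So, up to the sign convention for $\sec$ (outward normal), we have
\[
\sec(s,\epsilon)(X,Y)=\operatorname{Hessian} r(X,Y)=\langle \nabla_X \bar\nabla r, Y\rangle
\]
for $X,Y\perp\gamma'(\epsilon)$. We identify this bilinear form with a $2\times 2$ matrix using the parallel frame $E_1(\epsilon),E_2(\epsilon)$ of Definition~\ref{260504defi1_4}.

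\textbf{Step 2: the Hessian via Jacobi fields.} Let $J$ be a Jacobi field along $\gamma$ with $J(s)=0$ and $J\perp\gamma'$. It arises from a variation of $\gamma$ through geodesics emanating from $\gamma(s)$, all parametrized by arclength. Along such a variation $\bar\nabla r$ restricts to the velocity field of the variation, so $\nabla_{J(t)}\bar\nabla r = J'(t)$. The justification is the symmetry lemma $\nabla_{\partial_u}\partial_t=\nabla_{\partial_t}\partial_u$ applied to the variation.

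It follows that $\operatorname{Hessian} r(J(\epsilon),\cdot)=\langle J'(\epsilon),\cdot\rangle$. In other words, the shape operator $S$ of the sphere satisfies $S\,J(\epsilon)=J'(\epsilon)$ for every such $J$.

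\textbf{Step 3: matching with the ODE \eqref{260504e4_1a}.} Write $J(t)=\sum_j b_j(t)E_j(t)$ in the parallel frame. The Jacobi equation $J''+\mathcal{R}(J,\gamma')\gamma'=0$ becomes
\[
b_j''+\sum_i b_i R_{ij}=0,
\]
by \eqref{260504e1_16}. This is precisely the row-vector form of $B''+BR=0$. Hence the rows of $B(s,\cdot)$ are coefficient vectors of Jacobi fields $J_1,J_2$, and the initial conditions $B(s,s)=0$, $\partial_tB(s,s)=I$ say that $J_i(s)=0$ and $J_i'(s)=E_i(s)$.

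Now encode the relation $SJ_i(\epsilon)=J_i'(\epsilon)$ for $i=1,2$ in matrices. Writing $S$ as a matrix $\mathcal{S}$ acting on row vectors (that is, $v\mapsto v\mathcal{S}$), the relation reads
\[
B(s,\epsilon)\,\mathcal{S}=\partial_tB(s,\epsilon).
\]
Two facts finish the identification:
\begin{itemize}
\item $B(s,\epsilon)$ is invertible, because $\gamma(\epsilon)$ is not conjugate to $\gamma(s)$ for small $\epsilon$. This gives $\mathcal{S}=B^{-1}(s,\epsilon)\partial_tB(s,\epsilon)$.
\item The symmetry of $S$ matches the Wronskian identity \eqref{260505e4_8zz}: applied with $B=\bar B=B(s,\cdot)$, the constant vanishes at $t=s$, so $B^{-1}B'$ is symmetric and the row/column convention is consistent.
\end{itemize}

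\textbf{Main obstacle.} The only delicate point is Step 2: justifying $\nabla_J\bar\nabla r=J'$, which relies on $\bar\nabla r$ being tangent to the radial geodesics. Alongside this, the sign and transpose conventions must be kept consistent with the paper's $\sec$ and with $BR$ versus $RB$. Everything else is routine.
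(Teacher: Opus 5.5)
Your argument is correct and is essentially the paper's. Both identify $\sec(s,\epsilon)$ with the Hessian of $\mathrm{dist}(\gamma(s),\cdot)$ at $\gamma(\epsilon)$ and compute that Hessian through Jacobi fields along $\gamma$ vanishing at $\gamma(s)$. The difference is only bookkeeping. The paper imports this computation from \cite{DGGZ24} in the boundary-value normalization $A(s,s)=0$, $A(s,\epsilon)=I$, so the Hessian is $\partial_{s'}A(s,\epsilon)$, and then observes $A(s,s')=B^{-1}(s,\epsilon)B(s,s')$. You instead derive $\nabla_J\bar\nabla r=J'$ yourself in the initial-value normalization of \eqref{260504e4_1a} and invert $B(s,\epsilon)$ at the end.
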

\begin{proof}[Proof of Lemma \ref{260601lemma5_4}]
Recall the definition of $\Phi$ in \eqref{260504e3_12}. By the definition of the second fundamental form, we first have that  
\begin{equation}\label{260601e5_31zz}
    \left.\left(\operatorname{Hessian} \Phi\right)\right|_{\substack{(x, t)=\gamma(s)\\(y, \tau)=(0, \epsilon)}}\left(\frac{\partial}{\partial y_i}, \frac{\partial}{\partial y_j}\right)= \sec_{ij}(s, \epsilon),
\end{equation}
for all $1\le i, j\le 2$. Recall the definition of the matrix $A(s, s')$ in \eqref{260504e3_23}. By repeating (3.67)-(3.87) in \cite{DGGZ24}, we obtain that 
\begin{equation}
    \left.\left(\operatorname{Hessian} \Phi\right)\right|_{\substack{(x, t)=\gamma(s)\\(y, \tau)=(0, \epsilon)}}\left(\frac{\partial}{\partial y_i}, \frac{\partial}{\partial y_j}\right)=
    \partial_{s'} a_{ij}(s, \epsilon). 
\end{equation}
In the end, note that 
    \begin{equation}
        A(s, s')= B^{-1}(s, \epsilon) B(s, s').
    \end{equation}
This finishes the proof of lemma. 
\end{proof}

Note that 
\begin{equation}\label{260602e4_37}
W(s,\epsilon)=\sec(s,\epsilon)-\sec(0,\epsilon).
\end{equation}
To study the second fundamental form $\sec(s, \epsilon)$, we introduce the following two crucial ODE systems: 
\begin{equation}
\left\{
\begin{aligned}
&A_1''(t)+A_1(t)R(t)=0,\\
&A_1(\epsilon)=I,\qquad A_1'(\epsilon)=0,\\
\end{aligned}
\right.
\end{equation}
and 
\begin{equation}\label{260505e4_17tt}
\left\{
\begin{aligned}
&A_2''(t)+A_2(t)R(t)=0,\\
&A_2(\epsilon)=0,\qquad A_2'(\epsilon)=-I.
\end{aligned}
\right.
\end{equation}
\begin{claim}\label{260504claim4_1}
Let $\sec(\epsilon, s)$ be the second fundamental form at $\gamma(s)$ of the geodesic sphere centered at $\gamma(\epsilon)$ (and of radius $\epsilon-s$). Then, we have the reversed second fundamental form,
\begin{equation}\label{eq: Reverse sec}
\sec(\epsilon,s)=-A_2^{-1}(s)A_2'(s),
\end{equation}
and the reversed Riccati equation:
\begin{equation}\label{eq: Reverse Riccati}
\partial_s\sec(\epsilon,s)=\sec(\epsilon,s)^2+R(s).
\end{equation}
\end{claim}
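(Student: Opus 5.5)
We will prove Claim \ref{260504claim4_1} by running the argument of Lemma \ref{260601lemma5_4} with the roles of the two endpoints exchanged, and then differentiating directly. The geodesic sphere centered at $\gamma(\epsilon)$ passes through $\gamma(s)$. Its second fundamental form there is the restriction of the covariant Hessian of $\Phi$ in the $(x,t)$ variables, evaluated at $(x,t)=\gamma(s)$ and $(y,\tau)=(0,\epsilon)$. In Fermi coordinates this Hessian is computed from Jacobi fields along $\gamma$ that vanish at the center $\gamma(\epsilon)$, exactly as in (3.67)--(3.87) of \cite{DGGZ24}. Concretely, we take the parallel frame $E_1,E_2$ and write the normal Jacobi fields as $J(t)=\sum_j c_j(t)E_j(t)$. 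The Jacobi equation becomes $c''+Rc=0$, or in row-matrix form $A''+AR=0$ (using $R^T=R$). The $2\times 2$ matrix solution vanishing at $t=\epsilon$ is $A_2$, normalized by $A_2'(\epsilon)=-I$.

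The first step is to express the Hessian in terms of these Jacobi fields. Each vector $v\in T_{\gamma(s)}$ orthogonal to $\dot\gamma$ is the value at $s$ of a unique Jacobi field vanishing at $\gamma(\epsilon)$; in matrix form that field is $A_2^{-1}(s)A_2(t)$, since $A_2(s)$ is invertible for $0\le s<\epsilon$ small. The Hessian of the distance to $\gamma(\epsilon)$ is $\langle J'(s),v\rangle$. Because the distance decreases as $s$ increases toward $\epsilon$, the sign is reversed, and the $t$-derivative at $t=s$ gives $\sec(\epsilon,s)=-A_2^{-1}(s)A_2'(s)$. We will check the sign and orientation against the Euclidean case $R=0$. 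There $A_2(t)=(\epsilon-t)I$, so $-A_2^{-1}A_2'=(\epsilon-s)^{-1}I$, which is the correct positive curvature of a sphere of radius $\epsilon-s$. This is the analogue of Lemma \ref{260601lemma5_4}, where $A(s,s')=B^{-1}(s,\epsilon)B(s,s')$.

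The second step is a direct computation. Write $S=-A_2^{-1}A_2'$. Using $(A_2^{-1})'=-A_2^{-1}A_2'A_2^{-1}$, we get
\begin{equation*}
S'=A_2^{-1}A_2'A_2^{-1}A_2'-A_2^{-1}A_2''=S^2+A_2^{-1}A_2R=S^2+R,
\end{equation*}
where we have substituted $A_2''=-A_2R$. This is exactly the reversed Riccati equation \eqref{eq: Reverse Riccati}.

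The main obstacle is justifying the first step rigorously, namely the sign conventions and the identification of the Hessian with the Jacobi field derivative, that is, with the $\partial_{s'}$ quantity. We will handle it by citing the computation in \cite{DGGZ24} with the endpoints swapped, and by using the symmetry $\Phi(\bfx;\mathbf{y})=\Phi(\mathbf{y};\bfx)$. As an independent check, we will verify that $S$ is symmetric. This follows from the Wronskian identity \eqref{260505e4_8zz} applied with $B=\bar B=A_2$: the constant $A_2(A_2^T)'-A_2'A_2^T$ vanishes at $t=\epsilon$, hence vanishes identically, which gives $S^T=S$.
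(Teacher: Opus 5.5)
Your proposal is correct and is essentially the paper's argument. The paper also reruns Lemma \ref{260601lemma5_4} with the endpoints swapped: it implements the swap through the reversed parameter $\tilde t=-t$ and identifies $A_2(t)=\tilde B(-\epsilon,-t)$ by ODE uniqueness, which is exactly where your orientation sign flip comes from. It then obtains the reversed Riccati equation by the same direct differentiation using $A_2''=-A_2R$.
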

\begin{proof}[Proof of Claim \ref{260504claim4_1}]
The reversed Riccati equation follows from \eqref{eq: Reverse sec} and \eqref{260505e4_17tt}. Now, we will prove \eqref{eq: Reverse sec}. Along the geodesic $\gamma(t)$, we introduce a reversed time parameter $\tilde{t}=-t$ and define the reversed curvature matrix:
$$
\tilde{R}(\tilde{t}):=R(-\tilde{t}).
$$
With the $\tilde{t}$ parameter, since $-\epsilon<-s$, we have that  
\begin{equation}\label{260601e5_31zzr}
\tilde{\sec}_{ij}(-\epsilon,-s)=\left.\left(\operatorname{Hessian} \Phi\right)\right|_{\substack{(x, \tilde{t})=(0,-\epsilon)\\(y, \tilde{\tau})=(0,-s)}}\left(\frac{\partial}{\partial y_i}, \frac{\partial}{\partial y_j}\right)
\end{equation}
for all $1\le i, j\le 2$. Denote the reversed second fundamental form
$$
\sec(\epsilon,s)=\tilde{\sec}(-\epsilon,-s).
$$
Then, by the same proof of \eqref{260504e4_1}, we have
\begin{equation}\label{eq:reverse sec proof}
\left.\left(\operatorname{Hessian} \Phi\right)\right|_{\substack{(x, \tilde{t})=(0,-\epsilon)\\(y, \tilde{\tau})=(0,-s)}}=
\tilde{B}^{-1}(-\epsilon,-s)\partial_{\tilde{t}}\tilde{B}(-\epsilon,-s),
\end{equation}
where $\tilde{B}(-\epsilon,\tilde{t})$ solves the reversed Jacobi equation:
\begin{equation}\label{260504e4_1ar}
\left\{
\begin{aligned}
&\partial_{\tilde{t}}^2 \tilde{B}(-\epsilon,\tilde{t})+\tilde{B}(-\epsilon,\tilde{t})\tilde{R}(\tilde{t})=0,\\
&\tilde{B}(-\epsilon,-\epsilon)=0,\qquad \partial_{\tilde{t}}\tilde{B}(-\epsilon,-\epsilon)=I. 
\end{aligned}
\right.
\end{equation}
Changing the parameter, we can see that $A_2(t)$ and $\tilde{B}(-\epsilon,-t)$ satisfy the same ODE and initial values. By the uniqueness of the solution, we obtain
$$
A_2(t)=\tilde{B}(-\epsilon,-t).
$$
Combining with \eqref{260601e5_31zzr} and \eqref{eq:reverse sec proof}, we can obtain \eqref{eq: Reverse sec}. 
\end{proof}

We find an important relation between $\sec(s,\epsilon)$ and $\sec(\epsilon,s)$:
\begin{proposition}\label{260504prop4_1}
We have
\begin{equation}\label{eq:second derivative}
\partial_s^2\sec(s,\epsilon)=2(A_2^{T})^{-1}(s)\sec(\epsilon,s)A_2^{-1}(s).
\end{equation}
More generally, for $k\geq 1$, if we define
\begin{equation}\label{eq:def of I_k}
I_k(\epsilon,s)=A_2^T(s)\partial_s^kW(s,\epsilon)A_2(s),
\end{equation}
then we have $I_1=I$ and a recursive relation
\begin{equation}\label{eq:I_k recursion}
I_{k+1}(\epsilon,s)=\partial_sI_k(\epsilon,s)+\sec(\epsilon,s)I_k(\epsilon,s)+I_k(\epsilon,s)\sec(\epsilon,s).
\end{equation}
\end{proposition}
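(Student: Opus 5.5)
The plan is to express $\sec(s,\epsilon)$ directly through the fundamental solutions $A_1,A_2$ from \eqref{260505e4_17tt}, and then to differentiate in $s$ using Wronskian identities of the type \eqref{260505e4_8zz}. Throughout, $s\in[0,\epsilon)$ with $\epsilon$ small, so there are no conjugate points and $A_2(s)$ is invertible. The first two steps yield the closed formula $\partial_s\sec(s,\epsilon)=(A_2^T)^{-1}(s)A_2^{-1}(s)$. The last step turns this into the recursion, from which \eqref{eq:second derivative} follows as the case $k=1$.

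\emph{Step 1: a closed formula for $\sec(s,\epsilon)$.} Solutions of $B''+BR=0$ form a left module over constant matrices, and $A_1,A_2$ form a fundamental system. Hence the solution of \eqref{260504e4_1a} can be written as
\[
B(s,t)=C_1(s)A_1(t)+C_2(s)A_2(t)
\]
for some matrices $C_1(s),C_2(s)$ independent of $t$. Evaluating at $t=\epsilon$ gives $B(s,\epsilon)=C_1(s)$ and $\partial_tB(s,\epsilon)=-C_2(s)$. By Lemma \ref{260601lemma5_4}, $\sec(s,\epsilon)=-C_1^{-1}C_2$. The initial condition $B(s,s)=0$ reads $C_1A_1(s)+C_2A_2(s)=0$, so $C_1^{-1}C_2=-A_1(s)A_2^{-1}(s)$. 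Therefore
\[
\sec(s,\epsilon)=A_1(s)A_2^{-1}(s).
\]

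\emph{Step 2: Wronskians and the first derivative.} I will apply \eqref{260505e4_8zz} to the pairs $(A_1,A_2)$ and $(A_2,A_2)$, evaluating the constants at $t=\epsilon$. This gives
\[
A_1(A_2^T)'-A_1'A_2^T=-I,\qquad A_2(A_2^T)'-A_2'A_2^T=0 .
\]
The second identity gives $A_2^{-1}A_2'=(A_2^T)'(A_2^T)^{-1}$. The right-hand side is the transpose of the left-hand side, so $\sec(\epsilon,s)=-A_2^{-1}A_2'$ from Claim \ref{260504claim4_1} is symmetric, and $\sec(\epsilon,s)=-(A_2^T)'(A_2^T)^{-1}$. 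Differentiating the formula of Step 1 gives $\partial_s\sec=A_1'A_2^{-1}-A_1A_2^{-1}A_2'A_2^{-1}$. Substituting the expression for $A_2^{-1}A_2'$ and factoring out $(A_2A_2^T)^{-1}$, this becomes
\[
\partial_s\sec(s,\epsilon)=\bigl(A_1'A_2^T-A_1(A_2^T)'\bigr)(A_2A_2^T)^{-1}=(A_2^T)^{-1}A_2^{-1}.
\]
Since \eqref{260602e4_37} gives $\partial_s^kW(s,\epsilon)=\partial_s^k\sec(s,\epsilon)$ for $k\ge1$, the definition \eqref{eq:def of I_k} then yields $I_1=I$. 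This computation mirrors the first identity of Lemma \ref{prop:Wrons}.

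\emph{Step 3: the recursion.} Write $\partial_s^kW=(A_2^T)^{-1}I_kA_2^{-1}$ and differentiate using $(M^{-1})'=-M^{-1}M'M^{-1}$. This gives
\[
I_{k+1}=\partial_sI_k-(A_2^T)'(A_2^T)^{-1}I_k-I_kA_2^{-1}A_2' .
\]
By the two expressions for $\sec(\epsilon,s)$ obtained in Step 2, this is exactly \eqref{eq:I_k recursion}. Taking $k=1$ gives $I_2=2\sec(\epsilon,s)$, which is \eqref{eq:second derivative}.

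The main obstacle is Step 1. One must recognise that the correct fundamental system is the one normalised at $t=\epsilon$ rather than at $t=0$, and that solutions of $B''+BR=0$ are combined by \emph{left} multiplication by constant matrices. Once $\sec(s,\epsilon)=A_1A_2^{-1}$ is in hand, the remaining steps are routine Wronskian bookkeeping. The only other point needing care is the symmetry of $\sec(\epsilon,s)$, which is what lets both one-sided terms in the recursion be written with the same matrix.
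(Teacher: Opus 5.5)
Your proposal is correct and follows essentially the paper's proof. It uses the same skeleton: the closed formula $\sec(s,\epsilon)=A_1(s)A_2^{-1}(s)$, then the two Wronskian identities giving $\partial_s\sec(s,\epsilon)=(A_2^T)^{-1}A_2^{-1}$, then differentiating $(A_2^T)^{-1}I_kA_2^{-1}$ with $\sec(\epsilon,s)=-A_2^{-1}A_2'=-(A_2^T)'(A_2^T)^{-1}$.

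The differences are minor and make your argument slightly more self-contained:
\begin{itemize}
\item You obtain $\sec(s,\epsilon)=A_1A_2^{-1}$ by expanding $B(s,\cdot)$ in the fundamental system normalised at $t=\epsilon$. The paper instead uses Wronskians with $B(s,\cdot)$ to get $B(s,\epsilon)=A_2^T(s)$ and $\partial_tB(s,\epsilon)=A_1^T(s)$, and then invokes the symmetry of $\sec(s,\epsilon)$.
\item You derive the symmetry of $\sec(\epsilon,s)$ from the Wronskian identity, whereas the paper cites it geometrically.
\item You get \eqref{eq:second derivative} as the $k=1$ case of the recursion rather than by a separate differentiation.
\end{itemize}
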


One first difficulty in computing the $4\times \infty$ matrix \eqref{260504e3_20} is how to calculate partial derivatives in the $s$ variable. More precisely, from the defining equation \eqref{260504e4_1a}, we already know how to take successive derivatives in the $t$ variable, which is by taking derivatives in $t$ on both sides of the first equation in \eqref{260504e4_1a}. However, finding a convenient way of taking derivatives in the $s$ variable is more challenging. 

The dual relation \eqref{eq:second derivative} in Proposition \ref{260504prop4_1} and the reversed Riccati equation \eqref{eq: Reverse Riccati} tell us exactly how to take all derivatives in the $s$ variable. 

This relation between $\sec(s, \epsilon)$ and $\sec(\epsilon, s)$ seems to be new in the literature, although equations in the same spirit have been found in \cite[Proposition 6]{KV86}:
\begin{align}
\sec(s,\epsilon)&=A_1(s)A_2^{-1}(s),\label{eq:zero derivative}\\
\partial_s\sec(s,\epsilon)&=(A_2^T)^{-1}(s)A_2^{-1}(s),\label{eq:first derivative}
\end{align}
that is, the authors in \cite{KV86} only need to take derivatives up to order one in their problem. However, in the current problem, 
we cannot reveal the relation between $\sec(s,\epsilon)$ and $\sec(\epsilon,s)$ without taking the second derivative. Moreover, we also obtain the recursion relation \eqref{eq:I_k recursion} of all orders. To be self-contained, we will provide the proofs of \eqref{eq:zero derivative} and \eqref{eq:first derivative}.

\begin{proof}[Proof of Proposition \ref{260504prop4_1}]
Note that $A_1(t)$ and $B(s, t)$ satisfy the same differential relations in the $t$ variable (with different initial data), and therefore the Wronskian 
\begin{equation}
A_1(t)\partial_tB^T(s,t)-A_1'(t)B^T(s,t)
\end{equation}
is constant in the $t$ variable. We 
evaluate the Wronskian 
at $t=\epsilon$ and $t=s$, and obtain
\begin{equation}\label{eq:A1(s)}
A_1(s)=\partial_t B^T(s,\epsilon).
\end{equation}
Similarly, we evaluate the Wronskian 
$$
B(s,t)(A_2^T(t))'-\partial_tB(s,t)A_2^T(t)
$$
at $t=\epsilon$ and $t=s$, we can obtain
\begin{equation}\label{eq:A2(s)}
A_2^T(s)=B(s,\epsilon).
\end{equation}
Then, we can prove equation \eqref{eq:zero derivative}:
\begin{equation}\label{260504e4_14}
\sec(s,\epsilon)=B^{-1}(s,\epsilon)\partial_t B(s,\epsilon)=(A_2^T(s))^{-1}A_1(s)^T=A_1(s)A_2^{-1}(s).
\end{equation}
Moreover, we apply the same Wronskian argument as above and obtain 
\begin{align}
A_1(s)(A_2(s)^T)'-A_1'(s)A_2(s)^T&=-I,\label{eq:Wrons A1'}\\
A_2(s)(A_2(s)^T)'-A_2'(s)A_2(s)^T&=0,
\label{eq:Wrons A2'}
\end{align}
for all $s$. 
Then, taking $\partial_s$ of equation \eqref{260504e4_14} and adding the two equations above, we can prove equation \eqref{eq:first derivative}:
\begin{equation}\label{260504e4_17}
\partial_s\sec(s,\epsilon)=A_1'A_2^{-1}-A_1A_2^{-1}A_2'A_2^{-1}=(A_2^T)^{-1}A_2^{-1}.
\end{equation}
Finally, taking $\partial_s$ of equation \eqref{260504e4_17} and adding equations \eqref{eq:Wrons A2'}, \eqref{eq: Reverse sec}, we have
\begin{align*}
\partial_s^2\sec(s,\epsilon)&=-(A_2^T)^{-1}(A_2^T)'(A_2^T)^{-1}A_2^{-1}-(A_2^T)^{-1}A_2^{-1}A_2'A_2^{-1}\\
&=-2(A_2^T)^{-1}A_2^{-1}A_2'A_2^{-1}\\
&=2(A_2^T)^{-1}(s)\sec(\epsilon,s)A_2^{-1}(s).
\end{align*}
This finishes the proof of \eqref{eq:second derivative}. \\

We turn to the proof of the more general formula \eqref{eq:I_k recursion}. Note that when $k=1$, the equation \eqref{eq:I_k recursion} becomes 
\begin{equation}
I_1(\epsilon, s)= A_2^T(s) \partial_s W(s, \epsilon) A_2(s)=
 A_2^T(s) \partial_s \sec(s, \epsilon) A_2(s),
\end{equation}
which, combined with \eqref{eq:first derivative}, yields $I_1=I$. For a general $k$, let us write 
\begin{equation}
I_{k+1}(\epsilon, s)=
A_2^T(s) \partial_s \pnorm{
\partial^k_s W(s, \epsilon) 
} A_2(s).
\end{equation}
Note that 
\begin{equation}
\partial^k_s W=
(A_2^T)^{-1} I_k A_2^{-1},
\end{equation}
and therefore 
\begin{equation}
I_{k+1}= A_2^T \partial_s \pnorm{
(A_2^T)^{-1} I_k A_2^{-1}
}
A_2.
\end{equation}
By the chain rule, we obtain 
\begin{equation}
I_{k+1}=
-A_2^T (A_2^T)^{-1} (A_2^T)'  (A_2^T)^{-1} I_k A_2^{-1} A_2+ \partial_s I_k- \partial_s I_k A_2^{-1} A'_2 A_2^{-1} A_2,
\end{equation}
which is further equal to 
\begin{equation}\label{260505e4_25}
-(A_2^T)'  (A_2^T)^{-1} I_k A_2^{-1} A_2+ \partial_s I_k- \partial_s I_k A_2^{-1} A'_2.
\end{equation}
By \eqref{eq: Reverse sec} and by the fact that the second fundamental form $\sec(\epsilon, s)$ is symmetric, we obtain 
\begin{equation}
A_2^{-1}(s)A'_2(s)=- \sec(\epsilon, s)=
(A'_2(s))^T (A_2^{-1}(s))^T.
\end{equation}
This, combined with \eqref{260505e4_25}, implies the desired recursion relation \eqref{eq:I_k recursion}. This finishes the proof of the proposition. 
\end{proof}
\begin{remark}\label{260504remark4_3}
Let us explain the relation between Proposition \ref{260504prop4_1} and Lemma \ref{prop:Wrons}.  First of all, we take $s=0$ in \eqref{eq:A2(s)}, and obtain 
\begin{equation}
A_2^T(0)=B(0,\epsilon)=B_2(\epsilon).
\end{equation}
Combining with the definition of $I_k$, there is
\begin{equation}
\partial_s^kW(0,\epsilon)=B_2^{-1}(\epsilon)I_k(\epsilon,0)(B_2^T)^{-1}(\epsilon).
\end{equation}
From this and Lemma \ref{prop:Wrons}, we can see explicitly how $I_k(\epsilon, 0)$ looks like for $k=1, 2, 3, 4$. 

Next, let us try to explain the matrix $B_1 B_2^{-1}$ that appears in Lemma \ref{prop:Wrons}. Indeed, we claim that 
\begin{equation}
B_1(\epsilon)B_2(\epsilon)^{-1}=\sec(\epsilon,0),
\end{equation}
which is precisely the matrix $\sec$ that appears in the recursive relation \eqref{eq:I_k recursion}.

To see this, by \eqref{eq: Reverse sec}, it suffices to prove 
\begin{equation}\label{260505e4_42tt}
B_1(\epsilon)B_2(\epsilon)^{-1}= - A_2^{-1}(0) A'_2(0),
\end{equation}
where $A_2$ is defined via \eqref{260505e4_17tt}. To prove \eqref{260505e4_42tt}, we see that via linearity we can write 
\begin{equation}
A_2(t) = C_1 B_1(t) + C_2 B_2(t)\end{equation}
for two scalar matrices $C_1, C_2$. By the boundary condition at $t=0$, we can obtain 
\begin{equation}
A_2(t) = A_2(0)B_1(t) + A_2'(0)B_2(t). 
\end{equation}
In the end, we apply the boundary condition at $t=\epsilon$, and will obtain \eqref{260505e4_42tt}. 
\end{remark}

Next, we will do a decomposition for $I_k(\epsilon,s)$ and find a recursive relation.

\begin{proposition}\label{260505prop4_5}
We can write
\begin{equation}\label{eq:decomposition}
I_k(\epsilon,s)=a_k(s)\sec(\epsilon,s)+b_k(s)I+\mathcal{P}_k(R(s),\ldots,R^{(k-3)}(s))
\end{equation}
where $\mathcal{P}_k$ is a polynomial whose coefficients depend on $s$. To be more precise,\footnote{Regarding  the formula \eqref{260505e4_28}, products of $R^{(i)}$ start to appear when $k=5$, and we do not see these product terms in Proposition \ref{prop:Wrons}. }
\begin{equation}\label{260505e4_28}
\mc{P}_k=\sum_{i_1+\ldots+i_l\leq k-3}c_{i_1,\ldots,i_l}(s)R^{(i_1)}(s)R^{(i_2)}(s)\cdots R^{(i_l)}(s).
\end{equation}
Moreover we have $a_1=0$ and a recursive relation
\begin{equation}\label{eq:coeffi recursion}
a_{k+1}=a_k'+2a_k\mathrm{tr}\sec(\epsilon, s) +\mathrm{tr}I_k(\epsilon, s).
\end{equation}
Here $a_k, b_k$ and $c_{i_1, \dots, i_{l}}$ all depend on $\epsilon$ as well, but we ignore the dependence for the sake of simplifying notations. 
\end{proposition}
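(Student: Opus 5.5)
We will prove the proposition by induction on $k$, using only the recursion \eqref{eq:I_k recursion} from Proposition \ref{260504prop4_1}, the reversed Riccati equation \eqref{eq: Reverse Riccati}, and the fact that everything lives in $2\times 2$ matrices. Throughout, write $S=\sec(\epsilon,s)$. For $k=1$ we have $I_1=I$, so we take $a_1=0$, $b_1=1$, $\mathcal{P}_1=0$. The key algebraic tool is the Cayley--Hamilton identity for $2\times 2$ matrices:
\[
S^2=(\mathrm{tr}\,S)\,S-(\det S)\,I .
\]
Its polarized form is
\[
XS+SX=(\mathrm{tr}\,S)X+(\mathrm{tr}\,X)S+\big(\mathrm{tr}(XS)-\mathrm{tr}X\,\mathrm{tr}S\big)I .
\]
These identities let us reduce every quadratic expression in $S$, or any anticommutator with $S$, back to a combination of $S$, $I$, and the other factor.

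For the inductive step, assume \eqref{eq:decomposition} holds for $I_k$ with $\mathcal{P}_k$ of the form \eqref{260505e4_28}. By \eqref{eq:I_k recursion}, $I_{k+1}=\partial_sI_k+SI_k+I_kS$. We treat the three pieces of $I_k$ separately.

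\emph{The $a_kS$ piece.} Differentiating and using \eqref{eq: Reverse Riccati} gives
\[
\partial_s(a_kS)=a_k'S+a_k(S^2+R),
\]
while the anticommutator contributes $2a_kS^2$. So this piece produces $a_k'S+3a_kS^2+a_kR$. Reducing $S^2$ by Cayley--Hamilton yields an $S$-coefficient $a_k'+3a_k\,\mathrm{tr}S$, a multiple of $I$, and the term $a_kR$, which is admissible in $\mathcal{P}_{k+1}$ since $0\le k-2$.

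\emph{The $b_kI$ piece.} This gives $b_k'I+2b_kS$.

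\emph{The $\mathcal{P}_k$ piece.} The derivative $\partial_s\mathcal{P}_k$ raises the total derivative order of each monomial by at most one, so it stays within the index bound $\le k-2$. For the anticommutator $S\mathcal{P}_k+\mathcal{P}_kS$, apply the polarized identity with $X=\mathcal{P}_k$; this gives
\[
(\mathrm{tr}S)\mathcal{P}_k+(\mathrm{tr}\mathcal{P}_k)S+(\text{scalar})I .
\]
Here $\mathrm{tr}\,\mathcal{P}_k$ and $\mathrm{tr}(\mathcal{P}_kS)$ are scalars depending on $s$ (and $\epsilon$), so they may be absorbed into coefficients. The statement only asks for coefficients depending on $s$, so this is allowed.

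Collecting the $S$-coefficients, we get
\[
a_{k+1}=a_k'+3a_k\,\mathrm{tr}S+2b_k+\mathrm{tr}\,\mathcal{P}_k .
\]
Since $\mathrm{tr}I_k=2a_k\,\mathrm{tr}S... $ more precisely $\mathrm{tr}\,I_k=a_k\,\mathrm{tr}S+2b_k+\mathrm{tr}\,\mathcal{P}_k$, this matches \eqref{eq:coeffi recursion}: indeed $a_k'+2a_k\,\mathrm{tr}S+\mathrm{tr}I_k=a_k'+3a_k\,\mathrm{tr}S+2b_k+\mathrm{tr}\mathcal{P}_k$. The $I$-coefficient $b_{k+1}$ is whatever scalar remains, and $\mathcal{P}_{k+1}$ collects $a_kR$, $\partial_s\mathcal{P}_k$, and $(\mathrm{tr}S)\mathcal{P}_k$.

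The main obstacle is bookkeeping rather than substance. The decomposition is not unique, since, for example, $R$ and $I$ may be linearly dependent, so the recursion \eqref{eq:coeffi recursion} must be read as the definition of the particular $a_{k+1}$ produced by the construction above. We also need to check that the derivative indices satisfy $i_1+\dots+i_l\le k-2$ at step $k+1$. This holds because $a_kR$ has index $0$ and differentiation adds exactly one. We will make sure that the trace-based absorption never produces $S$-dependence inside $\mathcal{P}_{k+1}$, only inside the scalar coefficients.
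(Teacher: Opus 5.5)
Your proof is correct and follows the paper's argument: induction using the recursion \eqref{eq:I_k recursion}, the reversed Riccati equation and the $2\times 2$ Cayley--Hamilton identities, which yields the same update $\mathcal{P}_{k+1}=\partial_s\mathcal{P}_k+(\mathrm{tr}\,S)\mathcal{P}_k+a_kR$. The only difference is cosmetic: you apply the polarized identity to each piece and then check that the resulting $S$-coefficient $a_k'+3a_k\,\mathrm{tr}S+2b_k+\mathrm{tr}\,\mathcal{P}_k$ equals $a_k'+2a_k\,\mathrm{tr}S+\mathrm{tr}\,I_k$, whereas the paper applies it to $I_k$ as a whole and reads off \eqref{eq:coeffi recursion} directly.
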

\begin{proof}[Proof of Proposition \ref{260505prop4_5}]
We will prove \eqref{eq:decomposition} by induction. When $k=1$, $I_1=I$. The decomposition holds. Next, we will prove it for $I_{k+1}$. Combining with \eqref{eq:I_k recursion}, we have
$$
I_{k+1}=\partial_s(a_k\sec+b_kI+\mc{P}_k)+(I_k\sec+\sec I_k).
$$
For the first term, we use \eqref{eq: Reverse Riccati} and obtain 
\begin{equation}\label{260505e4_30}
\partial_s(a_k\sec+b_kI+\mc{P}_k)=a_k'\sec+a_k(\sec^2+R)+b_k'I+\partial_s \mc{P}_k.
\end{equation}
To handle the term $\sec^2$, we apply the Cayley-Hamilton theorem, which says that 
\begin{equation}
\sec^2=(\mathrm{tr} \sec) \sec-(\det \sec) I,
\end{equation}
and obtain 
\begin{align*}
\eqref{260505e4_30}&=a_k'\sec+a_k((\mathrm{tr}\sec) \sec-(\det\sec) I+R)+b_k'I+\partial_s \mc{P}_k\\
&=(a_k'+a_k\mathrm{tr}\sec) \sec +(b_k'-a_k\det\sec)I+(\partial_s \mc{P}_k+a_kR).
\end{align*}
For the second term, we use the algebraic identity, which is a consequence of the Cayley-Hamilton theorem: 
\begin{align*}
I_k\sec+\sec I_k&=(\mathrm{tr}I_k)\sec+(\mathrm{tr}\sec) I_k+(\mathrm{tr}(I_k \sec)-(\mathrm{tr}I_k)(\mathrm{tr}\sec))I\\
&=(\mathrm{tr}I_k)\sec+\mathrm{tr}\sec (a_k\sec+b_kI+\mc{P}_k)+(\mathrm{tr}(I_k \sec)-(\mathrm{tr}I_k)(\mathrm{tr}\sec))I.
\end{align*}
Combining the two equations above, we obtain
\begin{multline}\label{260516e6_66}
I_{k+1}=(a_k'+2a_k\mathrm{tr}\sec+\mathrm{tr}I_k) \sec\\
 +(b_k'+b_k\mathrm{tr}\sec-a_k\det\sec+\mathrm{tr}(I_k \sec)-(\mathrm{tr}I_k)(\mathrm{tr}\sec))I\\
 +(\partial_s \mc{P}_k+(\mathrm{tr}\sec) \mc{P}_k+a_kR).
\end{multline}
This finishes the proofs of both \eqref{eq:decomposition} and \eqref{eq:coeffi recursion}.
\end{proof}

Let us record one more recursive relation that will be very useful later. Denote
\begin{equation}
    \mathcal{P}_k(s):= \mathcal{P}_k(R(s), \dots, R^{(k-3)}(s)). 
\end{equation}
From \eqref{260516e6_66} and \eqref{eq:decomposition}, we see that 
\begin{equation}\label{260506e4_94}
    \mathcal{P}_{k+1}(s)=
    \partial_s \mathcal{P}_k(s)+ (\mathrm{tr}(\sec(\epsilon, s))) \mathcal{P}_k(s)+ 
    a_k(s)R(s),
\end{equation}
where $a_k$ is the same as the one in \eqref{eq:coeffi recursion}. 
One can compute directly (see for \eqref{eq:I_k recursion}) that 
\begin{equation}
\mathcal{P}_1(s)=\mathcal{P}_2(s)=0.
\end{equation}
From this we can compute that 
\begin{equation}
\mathcal{P}_3(s)= a_2(s) R(s).
\end{equation}
From Proposition \ref{260505prop4_5}, we obtain that $a_2(s)=2$, and therefore 
\begin{equation}
\mathcal{P}_3(s)=2 R(s).
\end{equation}
We apply \eqref{260506e4_94} iteratively, and obtain that $\mathcal{P}_i(0)$ is a linear combination of $R(0), R'(0), \dots$, and that 
\begin{equation}\label{260516e6_73}
\mathcal{P}_m(0)= 2 R^{(m-3)}(0)+ \text{lower derivatives of } R,
\end{equation}
for all $m\ge 3$. \\

Next, we will compute the $\partial_s^k\det W(s,\epsilon)$ where
\begin{equation}
W(s,\epsilon)=\sec(s,\epsilon)-\sec(0,\epsilon).
\end{equation}
For a $2\times 2$ matrix
\begin{equation}
S=\left(\begin{array}{cc}
a & b \\
b & c
\end{array}\right)
\end{equation}
 we write the companion of $S$
as
\begin{equation}
S^{\sharp}=\left(\begin{array}{cc}
c & -b \\
-b & a
\end{array}\right)
\end{equation}
 Then we have the following Jacobi's formula
\begin{equation}\label{260505e4_36}
\frac{d}{ds}\det S(s)=\mathrm{tr} (S^{\sharp}S'), 
\end{equation}
where $S^{\sharp}$ is the companion of $S$. More importantly, we will see that coefficients of $\partial_s^k\det W(0,\epsilon)$ satisfy the same recursive relation with the coefficients $a_k$ in the decomposition \eqref{eq:decomposition} of $I_k$.
\begin{proposition}\label{260505prop4_6}
We can write
\begin{equation}\label{eq:derivative determinant}
\partial_s^k\det W(s,\epsilon)=d_k(s)(\det A_2(s))^{-2}+\mathrm{tr}(W^{\sharp}(s,\epsilon) \partial_s^{k}W(s,\epsilon)),
\end{equation}
where $d_k$ is a scalar function depending on $\epsilon$ as well, and we ignore the dependence just for simplicity. 
Moreover we have $d_1=0$ and a recursive relation
\begin{equation}\label{eq:determinant recursion}
d_{k+1}=d_k'+2d_k\mathrm{tr}\sec +\mathrm{tr}I_k.
\end{equation}
\end{proposition}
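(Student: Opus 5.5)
We argue by induction on $k$, using only Jacobi's formula \eqref{260505e4_36}, the expression \eqref{eq:first derivative} for $\partial_s\sec(s,\epsilon)$, the reversed second fundamental form \eqref{eq: Reverse sec}, and the definition \eqref{eq:def of I_k} of $I_k$. The one algebraic fact we need is that the companion $S^{\sharp}$ is the adjugate of $S$. Hence it is linear in $S$, and $S^{\sharp}=(\det S)\,S^{-1}$ whenever $S$ is invertible.

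\emph{Base case.} For $k=1$, Jacobi's formula \eqref{260505e4_36} applied to the symmetric matrix $W(s,\epsilon)$ gives directly
\begin{equation*}
\partial_s\det W=\mathrm{tr}\bigl(W^{\sharp}\,\partial_sW\bigr).
\end{equation*}
So \eqref{eq:derivative determinant} holds with $d_1=0$.

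\emph{Induction step.} Assume \eqref{eq:derivative determinant} holds for $k$, and differentiate both sides in $s$. There are three pieces.

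First, the derivative of $(\det A_2)^{-2}$. By \eqref{eq: Reverse sec}, $A_2^{-1}A_2'=-\sec(\epsilon,s)$. Jacobi's formula then gives
\begin{equation*}
\partial_s\det A_2=\det A_2\,\mathrm{tr}(A_2^{-1}A_2')=-(\det A_2)\,\mathrm{tr}\sec(\epsilon,s).
\end{equation*}
Consequently
\begin{equation*}
\partial_s(\det A_2)^{-2}=2\,\mathrm{tr}\sec(\epsilon,s)\,(\det A_2)^{-2}.
\end{equation*}
Differentiating $d_k(\det A_2)^{-2}$ therefore produces $(d_k'+2d_k\,\mathrm{tr}\sec)(\det A_2)^{-2}$.

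Second, the derivative of the trace term. Since $\sharp$ is linear, the product rule gives
\begin{equation*}
\partial_s\,\mathrm{tr}\bigl(W^{\sharp}\partial_s^kW\bigr)=\mathrm{tr}\bigl(W^{\sharp}\partial_s^{k+1}W\bigr)+\mathrm{tr}\bigl((\partial_sW)^{\sharp}\,\partial_s^kW\bigr).
\end{equation*}
The first summand is exactly the trace term required at level $k+1$.

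Third, the identification of the extra summand, which is the key step. By \eqref{260602e4_37} and \eqref{eq:first derivative},
\begin{equation*}
\partial_sW(s,\epsilon)=\partial_s\sec(s,\epsilon)=(A_2^T)^{-1}A_2^{-1},
\end{equation*}
which is invertible with determinant $(\det A_2)^{-2}$. Hence
\begin{equation*}
(\partial_sW)^{\sharp}=(\det A_2)^{-2}\,A_2A_2^T.
\end{equation*}
By the definition \eqref{eq:def of I_k}, $\partial_s^kW=(A_2^T)^{-1}I_kA_2^{-1}$. Using cyclicity of the trace,
\begin{equation*}
\mathrm{tr}\bigl((\partial_sW)^{\sharp}\partial_s^kW\bigr)=(\det A_2)^{-2}\,\mathrm{tr}\bigl(A_2A_2^T(A_2^T)^{-1}I_kA_2^{-1}\bigr)=(\det A_2)^{-2}\,\mathrm{tr}\,I_k.
\end{equation*}

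Collecting the three pieces gives \eqref{eq:derivative determinant} at level $k+1$, with $d_{k+1}=d_k'+2d_k\,\mathrm{tr}\sec+\mathrm{tr}I_k$, which is \eqref{eq:determinant recursion}. The third step is the only place where something could go wrong: one must use the adjugate identity $S^{\sharp}=(\det S)S^{-1}$ rather than computing the companion entrywise. Beyond that, the argument only requires tracking the sign in $A_2^{-1}A_2'=-\sec(\epsilon,s)$, which is what makes the coefficient $+2\,\mathrm{tr}\sec$ agree with the recursion \eqref{eq:coeffi recursion} for $a_k$.
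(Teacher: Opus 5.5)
Your proof is correct and matches the paper's argument step for step. Both use induction from Jacobi's formula and the identity $(\det A_2)'=-(\det A_2)\,\mathrm{tr}\sec(\epsilon,s)$ coming from the reversed second fundamental form. Both then use $(W^{\sharp})'=(\partial_sW)^{\sharp}$ together with $(\partial_sW)^{\sharp}=(\det A_2)^{-2}A_2A_2^T$ and $\partial_s^kW=(A_2^T)^{-1}I_kA_2^{-1}$ to produce the $\mathrm{tr}\,I_k$ term.
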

\begin{proof}[Proof of Proposition \ref{260505prop4_6}]
We will prove \eqref{eq:derivative determinant} by induction. The $k=1$ case is straightforward. Next, we will prove the $k+1$ case. We have
\begin{multline}
\partial_s^{k+1}\det W(s,\epsilon)=d_k'(\det A_2(s))^{-2}-2d_k(\det A_2)^{-3}(\det A_2)'\\
+\mathrm{tr}((W^{\sharp})' \partial_s^{k}W)+\mathrm{tr}(W^{\sharp}\partial_s^{k+1}W).
\end{multline}
We keep the first and fourth terms. For the second term, by Jacobi formula \eqref{260505e4_36} and \eqref{eq: Reverse sec}, we have
\begin{equation}
(\det A_2)^{-3}(\det A_2)'=(\det A_2)^{-2}\mathrm{tr}( A_2^{-1}A_2')=-\mathrm{tr}\sec(\det A_2)^{-2}.
\end{equation}
Note that
\begin{equation}
(S^{\sharp})'=(S')^{\sharp}
\end{equation}
 for any $2\times 2$ matrix $S$. For the third term, combining this with \eqref{eq:def of I_k} and $I_1=I$, we have 
$$
\mathrm{tr}((W^{\sharp})' \partial_s^{k}W)=\mathrm{tr}((W')^{\sharp} \partial_s^{k}W)=\mathrm{tr}(((A_2^T)^{-1}A_2^{-1})^{\sharp} (A_2^T)^{-1}I_kA_2^{-1}).
$$
By using  
\begin{equation}
S^{\sharp}=(\det S )S^{-1}
\end{equation}
 for any invertible matrix $S$, we have
\begin{multline}
((A_2^T)^{-1}A_2^{-1})^{\sharp} =((A_2A_2^T)^{-1})^{\sharp} \\
=\det ((A_2A_2^T)^{-1})A_2A_2^T=(\det A_2)^{-2}A_2A_2^T.
\end{multline}
Combining the two equations above, we obtain
\begin{equation}
\mathrm{tr}((W^{\sharp})' \partial_s^{k}W)=(\det A_2)^{-2}\mathrm{tr}(A_2A_2^T (A_2^T)^{-1}I_kA_2^{-1})=(\det A_2)^{-2}\mathrm{tr}I_k.
\end{equation}
Combining the four terms together, we have
$$
\partial_s^{k+1}\det W(s,\epsilon)=(d_k'+2d_k\mathrm{tr}\sec+\mathrm{tr}I_k)(\det A_2)^{-2}+\mathrm{tr}(W^{\sharp}\partial_s^{k+1}W).
$$
This finishes the proof of Proposition \ref{260505prop4_6}. 
\end{proof}

\begin{proposition}\label{260430prop6_7}
Let $k\geq 4$. Define $(c_1(\epsilon),\ldots, c_k(\epsilon))$ such that
\begin{align*}
\sum_{i=1}^k\partial_s^iW(0,\epsilon)c_i(\epsilon)&=0,\\
\sum_{i=1}^k\partial_s^i\det W(0,\epsilon)c_i(\epsilon)&=0,
\end{align*}
that is, $(c_1(\epsilon),\ldots, c_k(\epsilon))$ is an element in the kernel of the matrix
\begin{equation}\label{260430e6_18}
\left[\begin{array}{cccc}
\partial_s \mathcal{W}(0, \epsilon) & \partial_s^2 \mathcal{W}(0, \epsilon) & \ldots & \partial_s^k \mathcal{W}(0, \epsilon) \\
\partial_s W_{11}(0, \epsilon) & \partial_s^2 W_{11}(0, \epsilon) & \ldots & \partial_s^k W_{11}(0, \epsilon) \\
\partial_s W_{12}(0, \epsilon) & \partial_s^2 W_{12}(0, \epsilon) & \ldots & \partial_s^k W_{12}(0, \epsilon) \\
\partial_s W_{22}(0, \epsilon) & \partial_s^2 W_{22}(0, \epsilon) & \ldots & \partial_s^k W_{22}(0, \epsilon)
\end{array}\right]
\end{equation} 
If the dimension of 
\begin{equation}\label{260430e6_17}
    \mathrm{span}\{I,R_0,\ldots, R_0^{(k-3)}\}
\end{equation}
is $\leq 2$, then for every $\epsilon$, the dimension of the kernel is $\geq k-3$. In other words, the rank of the above matrix is $\leq 3$.

\end{proposition}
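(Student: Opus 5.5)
The idea is to show that, for each fixed $\epsilon$, every column of the matrix \eqref{260430e6_18} is the image of a vector in $\R^3$ under one linear map $\R^3\to\R^4$ that depends on $\epsilon$ but not on the column index. This gives rank $\le 3$, and rank–nullity then gives a kernel of dimension $\ge k-3$. Throughout I evaluate at $s=0$, where Remark \ref{260504remark4_3} gives $A_2(0)=B_2(\epsilon)^T$, invertible for small $\epsilon$, and
\begin{equation*}
\partial_s^iW(0,\epsilon)=B_2^{-1}(\epsilon)\,I_i(\epsilon,0)\,(B_2^T)^{-1}(\epsilon).
\end{equation*}

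\emph{Step 1 (the last three rows).} By Proposition \ref{260505prop4_5},
\begin{equation*}
I_i(\epsilon,0)=a_i(0)\sec(\epsilon,0)+b_i(0)I+\mathcal{P}_i(0).
\end{equation*}
I will check that $\mathcal{P}_i(0)$ is a linear combination, with scalar coefficients, of $R_0,R_0',\dots,R_0^{(i-3)}$.

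To see this, use the recursion \eqref{260506e4_94} with $\mathcal{P}_1=\mathcal{P}_2=0$ and $\mathcal{P}_3=2R$. It shows by induction that $\mathcal{P}_i(s)$ is a combination of $R(s),\dots,R^{(i-3)}(s)$ whose coefficients are scalar functions of $s$: differentiating $\mathcal{P}_i$ only differentiates those scalar coefficients or raises the order of a derivative of $R$ by one. This is the point I would verify most carefully, since the footnote to \eqref{260505e4_28} mentions product terms. My reading is that these products enter only through scalar coefficients such as $\mathrm{tr}\sec$ and its $s$-derivatives (via \eqref{eq: Reverse Riccati}), so the matrix-valued part stays linear in the $R^{(j)}$.

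Now use the hypothesis that $\dim\mathrm{span}\{I,R_0,\dots,R_0^{(k-3)}\}\le 2$. Then there is a single symmetric matrix $M$ such that every $R_0^{(j)}$ with $j\le k-3$ lies in $\mathrm{span}\{I,M\}$; if the span is one-dimensional, take $M$ arbitrary. Hence for every $1\le i\le k$ there are scalars $\beta_i,\gamma_i$ with
\begin{equation*}
I_i(\epsilon,0)=a_i\sec(\epsilon,0)+\beta_i I+\gamma_i M .
\end{equation*}
Note that $\sec(\epsilon,0)$ need not lie in $\mathrm{span}\{I,M\}$, since it depends on $R$ along the whole segment and not only on its jet at $0$. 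This does not matter: it is a single fixed matrix for the given $\epsilon$.

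\emph{Step 2 (the first row).} By Proposition \ref{260505prop4_6},
\begin{equation*}
\partial_s^i\det W(0,\epsilon)=d_i(0)(\det A_2(0))^{-2}+\mathrm{tr}\bigl(W^\sharp(0,\epsilon)\partial_s^iW(0,\epsilon)\bigr).
\end{equation*}
Since $W(0,\epsilon)=\sec(0,\epsilon)-\sec(0,\epsilon)=0$, the trace term vanishes. Moreover $d_i$ and $a_i$ satisfy the identical recursions \eqref{eq:coeffi recursion} and \eqref{eq:determinant recursion}, with the same initial value $d_1=a_1=0$, so $d_i\equiv a_i$. Therefore
\begin{equation*}
\partial_s^i\det W(0,\epsilon)=(\det B_2(\epsilon))^{-2}a_i .
\end{equation*}

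\emph{Step 3 (conclusion).} Define the linear map $L_\epsilon:\R^3\to\R^4$ by
\begin{equation*}
L_\epsilon(a,\beta,\gamma):=\Bigl((\det B_2)^{-2}a,\ \text{the }(1,1),(1,2),(2,2)\text{ entries of }B_2^{-1}\bigl(a\sec(\epsilon,0)+\beta I+\gamma M\bigr)(B_2^T)^{-1}\Bigr).
\end{equation*}
By Steps 1 and 2, the $i$-th column of \eqref{260430e6_18} equals $L_\epsilon(a_i,\beta_i,\gamma_i)$. All $k$ columns therefore lie in the image of $L_\epsilon$, which has dimension $\le 3$. So the rank is $\le 3$ and the kernel has dimension $\ge k-3$, for every small $\epsilon$.

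The main obstacle I expect is the bookkeeping in Step 1: confirming that $\mathcal{P}_i(0)$ genuinely involves only $R_0^{(j)}$ with $j\le i-3$, entering linearly with scalar coefficients, so that the hypothesis on the span up to order $k-3$ suffices for every column $i\le k$. The identification $d_i=a_i$ in Step 2 is immediate from the two propositions.
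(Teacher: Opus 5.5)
Your proposal is correct and is essentially the paper's own proof. Both arguments decompose $I_i(\epsilon,0)=a_i\sec(\epsilon,0)+(\text{scalar})\,I+(\text{scalar})\,M$ via Proposition \ref{260505prop4_5}, drop the trace term using $W^\sharp(0,\epsilon)=0$, and use $a_i=d_i$; the paper then writes the result as three linear equations on $(c_i)$, whereas you write it as the columns lying in the image of a linear map $\mathbb{R}^3\to\mathbb{R}^4$. The bookkeeping you flag is fine either way: the recursion \eqref{260506e4_94} keeps $\mathcal{P}_i$ linear in $R,\dots,R^{(i-3)}$ with scalar coefficients, and even genuine matrix products would be harmless, since by Cayley--Hamilton $\mathrm{span}\{I,M\}$ is closed under multiplication (the alternative the paper itself mentions).
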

\begin{proof}[Proof of Proposition \ref{260430prop6_7}]
When the dimension of $\mathrm{span}\{I,R_0,\ldots, R_0^{(k-3)}\}=1$, the proof is similar and simpler. So, in the following, we assume that there is 
\begin{equation}
E_0\in\mathrm{span}\{I,R_0,\ldots, R_0^{(k-3)}\}
\end{equation}
such that for every $i=0,\ldots, k-3$,
\begin{equation}
R_0^{(i)}\in\mathrm{span}\{I,E_0\}.
\end{equation}
 So, by using Cayley-Hamilton theorem repetitively, or simply by using \eqref{260516e6_73}, we have\footnote{Indeed \eqref{260516e6_73} can be proven via applying Cayley-Hamilton theorem to \eqref{260505e4_28}; if we go back and check the proof of Proposition \ref{260505prop4_5} again, we will see that this is precisely how we obtained \eqref{260506e4_94}, and therefore \eqref{260516e6_73}. }
\begin{equation}
\mc{P}_i\in \mathrm{span}\{I,E_0\}, \text{ for any }i=1,\ldots, k.
\end{equation}
By the previous two propositions and $W^{\sharp}(0,\epsilon)=0$, we have that 
\begin{align*}
\sum_{i=1}^k\partial_s^iW(0,\epsilon)c_i(\epsilon)&=\sum_{i=1}^k
(A_2^T)^{-1} 
(c_ia_i \sec+c_ib_iI+c_i\mc{P}_i)(A_2)^{-1}\\
&=\sum_{i=1}^k
(A_2^T)^{-1}
(c_ia_i \sec+c_i\tilde{b}_iI+c_i\tilde{p}_iE_0)
(A_2)^{-1},
\end{align*}
for some new scalars $\tilde{b}_i$ and $\tilde{p}_i$, and 
\begin{equation}
\sum_{i=1}^k\partial_s^i\det W(0,\epsilon)c_i(\epsilon)=\sum_{i=1}^kc_id_i(\det A_2)^{-2}.
\end{equation}
Since the initial conditions and recursive relations of $a_i$ and $d_i$ coincide respectively, we know that 
\begin{equation}\label{aidi_equal}
a_i=d_i, \text{ for any } i , \epsilon.
\end{equation}
So, the linear system of finding $(c_1, \dots, c_k)$ can be reduced to
\begin{equation}
\sum_{i=1}^k\tilde{b}_ic_i=0,
\end{equation}
\begin{equation}
\sum_{i=1}^k\tilde{p}_ic_i=0
\end{equation}
and 
\begin{equation}
\sum_{i=1}^kd_ic_i=0.
\end{equation}
From these it is easy to see that the dimension of the kernel is $\ge k-3$. This finishes the proof of the proposition.  
\end{proof}

Proposition \ref{260430prop6_7} leads to a contradiction  that we are assuming item (3.2) in Theorem \ref{260509theorem2_1}. This finishes the proof that (3.2) implies (3.3).

\subsection{Item (3.3) implies item (3.2)}\label{260603subsection4_3}

Let $k \ge 4$.  We argue by contradiction, and assume that for every $\epsilon$ small enough, the first $k$ columns of the matrix \eqref{260309e1_4} has rank $\le 3$. Our goal is to derive a contradiction to item (3.3).\\

We continue with the setup in the proof that (3.2) implies (3.3). In particular, we will consider the rank of the matrix \eqref{260430e6_18}. Fix an $\epsilon>0$ small enough. There exist $c_i=c_i(\epsilon)\in \R, i=1, \dots, 4$, not all zero, such that 
\begin{multline}\label{260506e4_72}
c_1(\epsilon) \partial_s^i W_{11}(0, \epsilon) + c_2(\epsilon) \partial_s^i W_{12}(0, \epsilon) \\
+ c_3(\epsilon) \partial_s^i W_{22}(0, \epsilon) + c_4(\epsilon) \partial_s^i \det W(0, \epsilon) = 0, \ \forall i=1, 2, \dots, k.  
\end{multline}
Let us write the above relation in a more compact form: Denote 
\begin{equation}
K=
\begin{bmatrix}
c_1 & c_2/2\\
c_2/2 & c_3
\end{bmatrix}
\end{equation}
and we can write \eqref{260506e4_72} equivalently as 
\begin{equation}\label{260506e4_75}
\operatorname{tr}\big(K(\epsilon) \partial_s^i W(0, \epsilon)\big) + c_4 \partial_s^i \det W(0, \epsilon) = 0.
\end{equation}
Recall from \eqref{eq:def of I_k} that 
\begin{equation}
\partial^i_s W(0, \epsilon)= (A_2^T(0))^{-1} I_i(\epsilon, 0) (A_2(0))^{-1},
\end{equation}
and from Proposition \ref{260505prop4_6} that 
\begin{equation}
\partial^i_s \det W(0, \epsilon)=
a_i(0) (\det A_2(0))^{-2}= 
d_i(0) 
(\det A_2(0))^{-2}
\end{equation}
where we applied \eqref{aidi_equal} and 
\begin{equation}
W^{\sharp}(0, \epsilon)=0,
\end{equation}
we can write \eqref{260506e4_72} further as 
\begin{equation}\label{260506e4_79}
 \operatorname{tr}\big(K (A_2^T)^{-1} I_i(\epsilon, 0) (A_2)^{-1}\big) + c_4 a_i(0) \det(A_2)^{-2} = 0. 
\end{equation}
By using elementary properties of traces, the first term can be written as 
\begin{equation}
\operatorname{tr}\big(
(A_2)^{-1}
K (A_2^T)^{-1} I_i(\epsilon, 0) \big). 
\end{equation}
Let us denote 
\begin{equation}
Y= 
(A_2)^{-1} K(A_2^T)^{-1},
\end{equation}
and 
\begin{equation}
\lambda= c_4 \det(A_2)^{-2}.
\end{equation}
Under these notations, \eqref{260506e4_79} is equivalent to 
\begin{equation}\label{260506e4_83}
\mathrm{tr} (Y(\epsilon) I_i(\epsilon, 0))+ \lambda(\epsilon) a_i(0)=0, \ i=1, \dots, k.
\end{equation}
Note that 
\begin{equation}
(c_1, c_2, c_3, c_4) \neq 0
\end{equation}
 guarantees that 
 \begin{equation}\label{260506e4_85}
 (Y, \lambda) \neq (0, 0).
 \end{equation}
Now we consider some special values of $i$. First of all, we take $i=1$. We have that $I_1=I$, the $2\times 2$ identity matrix, and $a_1(0)=0$. As a consequence, we obtain that 
\begin{equation}
\mathrm{tr}(Y(\epsilon))=0,
\end{equation}
that is, $Y$ is a trace-free symmetric matrix. Next, we take $i=2$. In this case, from Proposition \ref{260504prop4_1}, we obtain that 
\begin{equation}\label{260506e4_87}
I_2(\epsilon, 0)=
2\sec(\epsilon, 0),
\end{equation}
and from Proposition \ref{260505prop4_5}, we obtain that $a_2(0)=2. $ By substituting these into \eqref{260506e4_83}, we obtain 
\begin{equation}
2\mathrm{tr}(Y(\epsilon) \sec(\epsilon, 0))+ 2 \lambda(\epsilon) =0,
\end{equation}
which implies that 
\begin{equation}\label{260506e4_89}
\lambda(\epsilon)= 
-\mathrm{tr}(Y(\epsilon) \sec(\epsilon, 0)).
\end{equation}
From \eqref{260506e4_89} we can also conclude that $Y\neq 0$, as otherwise both $Y$ and $\lambda$ need to vanish, which contradicts \eqref{260506e4_85}. \\

Now we go back to \eqref{260506e4_83} and consider what it says for a more general $i\ge 3$. We first follow \eqref{eq:decomposition}, and write 
\begin{equation}\label{260506e4_90}
I_i(\epsilon, 0)=
a_i(0) \sec(\epsilon, 0)
+ 
b_i(0) I+
\mathcal{P}_i(R(0), \dots, R^{(i-3)}(0)).
\end{equation}
We substitute this into \eqref{260506e4_83} with $i\ge 3$, and obtain 
\begin{equation}
a_i(0) \mathrm{tr}(Y(\epsilon) \sec(\epsilon, 0))
+
\mathrm{tr}(Y(\epsilon) \mathcal{P}_i)+ a_i(0) \lambda(\epsilon)=0,
\end{equation}
where we used the fact that $Y$ is trace-free. This, combined with \eqref{260506e4_89}, implies that 
\begin{equation}\label{260506e4_92}
\mathrm{tr}(Y(\epsilon) \mathcal{P}_i)=0, \ k\ge i\ge 3.
\end{equation}
The recursive relation \eqref{260516e6_73}, combined with \eqref{260506e4_92}, implies that 
\begin{equation}
\mathrm{tr}(Y(\epsilon) R^{(i)}(0))=0, \ i=0, 1, \dots, k-3.
\end{equation}
We claim that 
\begin{equation}\label{260506e4_100}
\mathrm{span}\{I, R(0), \dots, R^{(k-3)}(0)\}\le 2.
\end{equation}
To see \eqref{260506e4_100}, we just need to write 
\begin{equation}
R^{(i)}(0)= R^{(i)}(0)- \frac{\mathrm{tr} R^{(i)}(0)}{2} I+ \frac{\mathrm{tr} R^{(i)}(0)}{2} I.
\end{equation}
By the fact that $Y(\epsilon)$ is trace-free, we obtain that the trace of 
\begin{equation}
Y(\epsilon)\pnorm{
R^{(i)}(0)- \frac{\mathrm{tr} R^{(i)}(0)}{2} I
}
\end{equation}
also vanishes. From this and the fact that $Y\neq 0$, we can conclude \eqref{260506e4_100} immediately.\\

Once \eqref{260506e4_100} is proven, we see immediately that $R(0), \dots, R^{(k-3)}(0)$ are simultaneously diagonalizable, and therefore we arrive at a contradiction.

\subsection{The equivalence of (3.3), (3.4) and (3.5)}\label{260603subsection4_4}

We now prove that Items (3.3), (3.4) and (3.5) in Theorem~\ref{260509theorem2_1}
are equivalent. The proof is elementary, but we give the details because it is
important to keep track of the different notations for the Jacobi operators.\\

Let $\mathcal{R}$ be the Riemannian curvature tensor. Set
\begin{equation}
Z:=\gamma'(0),
\qquad
\bfP:=Z^\perp\subset T_{\gamma(0)}\mathcal{M}.
\end{equation}
For \(0\leq m\leq k-3\), define the linear operator
\(\mathcal R_m:T_{\gamma(0)}M\to T_{\gamma(0)}M\) by
\begin{equation}
\mathcal R_m(v):=(\nabla_Z^m \mathcal{R})(v,Z)Z.
\end{equation}
Thus, in the notation of Definition~\ref{def:k exceptional},
\begin{equation}
\mathcal R_m=\mathcal{R}^{m+2}_{\gamma(0)}.
\end{equation}
Moreover,
\begin{equation}
\mathcal R_m(Z)=0.
\end{equation}
The restriction of \(\mathcal R_m\) to \(\bfP\) is self-adjoint. If
\(E_1(t),E_2(t)\) is the parallel orthonormal frame used in
Definition~\ref{260504defi1_4}, then the matrix of
\(\mathcal R_m|_{\bfP}\) in the basis \(E_1(0),E_2(0)\) is exactly
\begin{equation}
R^{(m)}(0),
\end{equation}
where the matrix $R$ is defined in \eqref{260504e1_17}. 
Indeed,
\begin{multline}
\left\langle \mathcal R_m(E_i(0)),E_j(0)\right\rangle=
\left\langle
(\nabla_{\gamma'}^m \mathcal{R})(E_i,\gamma')\gamma',
E_j
\right\rangle\big|_{s=0}     \\
=
\frac{d^m}{ds^m}
\left\langle \mathcal{R}(E_i(s),\gamma'(s))\gamma'(s),E_j(s)\right\rangle
\bigg|_{s=0}           =
R_{ij}^{(m)}(0).
\end{multline}

We shall use the following elementary linear algebra fact. Let
\(A_0,\ldots,A_N\) be self-adjoint operators on a two-dimensional real inner
product space \(\bfP\). Then the following two statements are equivalent:

\begin{equation}
A_0,\ldots,A_N
\quad
\text{are simultaneously diagonalizable;}
\end{equation}
and there exists an orthonormal basis \((X,E)\) of \(\bfP\) such that
\begin{equation}
\langle A_mX,E\rangle=0,
\qquad
0\leq m\leq N.
\end{equation}
We first compare Item (3.3) with chaotic curvature. Let \(X=X(0)\in \bfP\) be a
unit vector, and let \(X(t)\) be its parallel transport along \(\gamma\). Let
\(E=E(0)\in \bfP\) be a unit vector perpendicular to \(X\), and let \(E(t)\) be its
parallel transport along \(\gamma\). Then
\begin{equation}
(X(t),E(t),\gamma'(t))
\end{equation}
is an orthonormal frame along \(\gamma\). By Definition~\ref{260505defi1_6},
\begin{equation}
Y(t)=\bar{\operatorname{Ric}}(X(t)).
\end{equation}
Since \(E(t)\) spans the orthogonal complement of
\(\operatorname{span}\{X(t),\gamma'(t)\}\), we have
\begin{equation}
Y^\perp(t)
=
\operatorname{Ric}_{\gamma(t)}(X(t),E(t))E(t).
\end{equation}
Using the curvature symmetries in dimension three, and the fact that
\(X(t),E(t)\) and \(\gamma'(t)\) are parallel along \(\gamma\), we obtain, for
\(0\leq m\leq k-3\),
\begin{multline}
\nabla_{\gamma'}^mY^\perp(0)
=
(\nabla_{\gamma'}^m\operatorname{Ric})(X,E)E       
\\
=
\left\langle
(\nabla_Z^m \mathcal{R})(X,Z)Z,E
\right\rangle E                                  
=
\left\langle
\mathcal R_m(X),E
\right\rangle E.
\end{multline}
Therefore
\begin{equation}
|Y^\perp(0)|+\cdots+
|\nabla_{\gamma'}^{k-3}Y^\perp(0)|\neq 0
\end{equation}
is equivalent to saying that
\begin{equation}
\left\langle
\mathcal R_m(X),E
\right\rangle\neq 0
\end{equation}
for at least one \(0\leq m\leq k-3\). This, combined with the linear algebra observation above, proves that (3.3) and (3.4) are equivalent. \\

It remains to compare Item (3.3) with Item (3.5). By Definition~\ref{def:k exceptional},
the geodesic \(\gamma\) is \((k-1)\)-exceptional if and only if there exists a
two-dimensional subspace
\begin{equation}
V\subset T_{\gamma(0)}\mathcal{M},
\qquad
Z\in V,
\end{equation}
such that
\begin{equation}
\mathcal R^q_{\gamma(0)}(V)\subset V,
\qquad
2\leq q\leq k-1.
\end{equation}
Writing \(q=m+2\), this is the same as
\begin{equation}
\mathcal R_m(V)\subset V,
\qquad
0\leq m\leq k-3.
\end{equation}
Since \(V\) is two-dimensional and contains \(Z\), there exists a unit vector
\(X\in \bfP\) such that
\begin{equation}
V=\operatorname{span}\{Z,X\}.
\end{equation}
Let \(E\in \bfP\) be a unit vector perpendicular to \(X\). Since
\(\mathcal R_m(Z)=0\) and \(\mathcal R_m(X)\in \bfP\), the condition
\(\mathcal R_m(V)\subset V\) is equivalent to
\begin{equation}
\mathcal R_m(X)\in \mathbb RX,
\end{equation}
or equivalently,
\begin{equation}
\left\langle
\mathcal R_m(X),E
\right\rangle=0,
\qquad
0\leq m\leq k-3.
\end{equation}
By the same linear algebra observation, this finishes the proof of the equivalence between (3.3) and (3.5).

\section{Proof of Theorem \ref{260509theorem2_1}: Item (2)}

Item (2) follows immediately from the following three  lemmas.

\begin{lemma}\label{260516lemma7_1}
    Assume that the matrix \eqref{260509e2_2} has rank three for all small enough $\epsilon$. Then $R(s)$ is simultaneously diagonalizable in $s$ and $R_{11}\not\equiv_s R_{22}$. 
\end{lemma}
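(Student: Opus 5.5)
We argue the two conclusions separately, using the machinery already developed for Item (3).

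\textbf{Simultaneous diagonalizability.} Suppose, for contradiction, that $R(s)$ is not simultaneously diagonalizable in $s$. By analyticity there is then a finite $k\ge 4$ such that $R(0),\dots,R^{(k-3)}(0)$ cannot be simultaneously diagonalized; this is item (3.3). The already established implication (3.3)$\Rightarrow$(3.1) shows that the first $k$ columns of \eqref{260509e2_2} have rank four for all sufficiently small $\epsilon$. This contradicts the assumption that the full matrix has rank three for all small $\epsilon$. Since the geodesic $\gamma$ is arbitrary and can be re-centred at any point $\gamma(s_0)$, the same reasoning applies at every base point. Hence $R(s)$ is simultaneously diagonalizable for all $s$.

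\textbf{$R_{11}\not\equiv R_{22}$.} After diagonalizing, again suppose for contradiction that $R_{11}\equiv R_{22}$. Then $R(s)$ is a scalar matrix $\rho(s)I$ for every $s$, so every derivative $R^{(m)}(0)$ lies in $\mathrm{span}\{I\}$. We then return to the kernel computation in the proof of Proposition \ref{260430prop6_7}, in the one-dimensional case $\dim\mathrm{span}\{I,R_0,\dots\}=1$.

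\begin{itemize}
\item By \eqref{260506e4_94} with $\mathcal P_1=\mathcal P_2=0$, each $\mathcal P_i(0)$ is a scalar multiple of $I$.
\item Consequently every $I_i(\epsilon,0)$ lies in $\mathrm{span}\{\sec(\epsilon,0),I\}$, with coefficient $a_i$ on $\sec(\epsilon,0)$ and some scalar $\tilde b_i$ on $I$.
\item Using $a_i=d_i$ and $W^\sharp(0,\epsilon)=0$, a vector $(c_i)$ lies in the kernel of the first $k$ columns as soon as $\sum_i a_ic_i=0$ and $\sum_i\tilde b_ic_i=0$.
\end{itemize}

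These are only two linear conditions on $(c_1,\dots,c_k)$, so the kernel has dimension at least $k-2$. Hence $\mathfrak W_k$ has rank at most $2$ for every $k$, and therefore $\mathfrak W$ has rank at most $2$. This contradicts rank three.

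The only genuinely new step is this strengthened counting, rank at most $2$ rather than at most $3$ in the scalar case. I would write it out carefully by checking that the row space of $\mathfrak W_k$ is spanned, after conjugating by the invertible matrix $A_2^T(0)$, by the two vectors $(a_i)_{i=1}^k$ and $(\tilde b_i)_{i=1}^k$. Concretely:

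\begin{itemize}
\item The three entries $W_{11},W_{12},W_{22}$ of $\partial_s^iW(0,\epsilon)=(A_2^T)^{-1}\bigl(a_i\sec+\tilde b_iI\bigr)A_2^{-1}$ are fixed linear combinations, independent of $i$, of $a_i$ and $\tilde b_i$.
\item The determinant row equals $a_i(\det A_2)^{-2}$.
\end{itemize}

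Thus all four rows lie in $\mathrm{span}\{(a_i),(\tilde b_i)\}$, so the rank is at most two. This completes the contradiction and proves the lemma.
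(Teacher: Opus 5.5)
Your proof is correct and follows the paper's argument. The first conclusion comes by contradiction through $(3.3)\Rightarrow(3.1)$. The second comes from showing that $R\equiv\rho I$ forces rank at most two, which you make explicit via the $a_i=d_i$ and $\tilde b_i$ counting (the paper just calls this elementary; even more directly, $A_2$ and hence $W(s,\epsilon)$ are scalar multiples of $I$, so $W_{12}\equiv 0$, $W_{11}\equiv W_{22}$ and $\det W=W_{11}^2$). The remark about re-centring at $\gamma(s_0)$ is unnecessary: the hypothesis is only given at $\gamma(0)$, and analyticity already produces the finite $k$ there.
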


\begin{lemma}\label{260516lemma7_2}
    Fix $k\ge 3$. Assume that $R(s)$ is simultaneously diagonalizable in $s$, and 
    \begin{equation}
        R_{11}^{(m)}(0)\neq R_{22}^{(m)}(0),
    \end{equation}
    for some $0\le m\le k-3$. Then the first $k$ columns of the $4\times \infty$ matrix \eqref{260509e2_2} has rank three. 
\end{lemma}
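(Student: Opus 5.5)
We show that the first $k$ columns have rank at least three; by Proposition \ref{260430prop6_7}, adapted to the simultaneously diagonalizable case, the rank is at most three. Indeed, if $R(s)$ is simultaneously diagonalizable in $s$, then $\mathrm{span}\{I,R(0),\dots,R^{(k-3)}(0)\}$ has dimension $\le 2$. The proof of Proposition \ref{260430prop6_7} only used this dimension bound and the decomposition of Proposition \ref{260505prop4_5}, so it carries over and gives rank $\le 3$ for every $\epsilon$. For $k=3$ the upper bound is trivial.

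For the lower bound we mimic Subsection \ref{260603subsection4_3}. Suppose, for contradiction, that for some small $\epsilon$ the first $k$ columns have rank $\le 2$. Then the left kernel (in the row variables) has dimension at least two, so we get two linearly independent vectors $(c_1,c_2,c_3,c_4)$. As before, each gives a pair $(Y,\lambda)$ satisfying \eqref{260506e4_83} for $i=1,\dots,k$. The map from $(c_1,\dots,c_4)$ to $(Y,\lambda)$ is linear and injective, since $A_2$ is invertible. Hence we get a two-dimensional space of admissible pairs $(Y,\lambda)$.

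The argument of Subsection \ref{260603subsection4_3} applies verbatim to each pair:
\begin{itemize}
\item $i=1$ forces $\mathrm{tr}\, Y=0$;
\item $i=2$ forces $\lambda=-\mathrm{tr}(Y\sec(\epsilon,0))$, so $Y$ determines $\lambda$;
\item for $3\le i\le k$, \eqref{260506e4_92} together with the triangular structure \eqref{260516e6_73} gives $\mathrm{tr}(Y R^{(m)}(0))=0$ for all $0\le m\le k-3$.
\end{itemize}
Since $Y$ determines $\lambda$, the admissible $Y$'s form a two-dimensional subspace of $\mathcal{S}_0$, the trace-free symmetric $2\times 2$ matrices. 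But $\mathcal{S}_0$ is itself two-dimensional, so the admissible $Y$'s are all of $\mathcal{S}_0$.

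Now diagonalize in the parallel frame, so that $R^{(m)}(0)=\mathrm{diag}(R_{11}^{(m)}(0),R_{22}^{(m)}(0))$. Take $Y=\mathrm{diag}(1,-1)\in\mathcal{S}_0$. Then
\[
\mathrm{tr}(Y R^{(m)}(0))=R^{(m)}_{11}(0)-R^{(m)}_{22}(0).
\]
By hypothesis this is nonzero for some $0\le m\le k-3$, a contradiction. So the first $k$ columns have rank $\ge 3$ for every small $\epsilon$, and combined with the upper bound the rank is exactly three.

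The main point to check is the use of \eqref{260516e6_73}. We need the linear span of $\mathcal P_3(0),\dots,\mathcal P_k(0)$ to contain the span of $R(0),\dots,R^{(k-3)}(0)$ modulo multiples of $I$; only this is required, since $\mathrm{tr}\,Y=0$ kills the $I$ component. The paper asserts that each $\mathcal P_m(0)$ is a linear combination of derivatives of $R$, with leading term $2R^{(m-3)}(0)$. Products of derivatives of $R$ do appear in \eqref{260505e4_28}, but in the diagonal case all of them are diagonal, so they reduce modulo $I$ to multiples of $\mathrm{diag}(1,-1)$. The triangular induction therefore still yields the needed span. The only remaining care is the case where the relevant differences vanish for small $m$: then one runs the induction up to the first $m$ with $R^{(m)}_{11}(0)\neq R^{(m)}_{22}(0)$, and all lower-order contributions are multiples of $I$.
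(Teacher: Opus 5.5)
Your proof is correct. It uses the paper's machinery: the reduction to pairs $(Y,\lambda)$ satisfying \eqref{260506e4_83}, the identities coming from $i=1,2$, and the triangular structure \eqref{260516e6_73}. Where it differs is in how you produce a usable $Y$.

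\textbf{The paper's route.} It works in the frame where $R(s)$ is diagonal. There the Jacobi equations decouple, so $A_2$ and $W(s,\epsilon)$ are diagonal and the $W_{12}$ row vanishes identically. Rank $\le 2$ then gives a single nontrivial relation among the rows $W_{11},W_{22},\det W$, with $K$ diagonal. Hence $Y=A_2^{-1}K(A_2^T)^{-1}$ is a nonzero trace-free diagonal matrix, i.e.\ a nonzero multiple of $\mathrm{diag}(1,-1)$. The identity $\mathrm{tr}(YR^{(m)}(0))=0$ then gives $R^{(m)}_{11}(0)=R^{(m)}_{22}(0)$ directly.

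\textbf{Your route.} You keep all four rows and use that rank $\le 2$ gives a two-dimensional left kernel. Its injective image in $\mathcal S_0$ must then be all of $\mathcal S_0$. This never uses diagonality of $A_2$ or $W$, so your lower bound is frame-free and holds along any geodesic: $\operatorname{rank}\mathfrak W_k\le 2$ forces $R(0),\dots,R^{(k-3)}(0)$ to be multiples of $I$. Simultaneous diagonalizability is needed only for the upper bound.

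\textbf{Trade-offs.} The paper's route is shorter in the diagonal setting. It also makes the upper bound rank $\le 3$ automatic, since one row is identically zero, though the paper leaves this implicit. Your appeal to Proposition \ref{260430prop6_7} also works, and it applies as stated with no adaptation, since its hypothesis is exactly the span bound. Together with the trivial case $k=3$, that settles the upper bound.

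\textbf{Your final paragraph.} It is unnecessary. By \eqref{260506e4_94}, each $\mathcal P_m$ is a linear combination of $R,\dots,R^{(m-3)}$ with scalar coefficients and leading coefficient $2$. Cayley--Hamilton pushes all products into the scalar coefficients. So the triangular induction works as it stands for every $Y\in\mathcal S_0$, without re-invoking diagonality.
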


\begin{lemma}\label{260516lemma7_3}
    Fix $k\ge 3$. Assume that $R(s)$ is simultaneously diagonalizable in $s$, and assume that the first $k$ columns of the $4\times \infty$ matrix \eqref{260509e2_2} has rank three. Then \begin{equation}
        R_{11}^{(m)}(0)\neq R_{22}^{(m)}(0),
    \end{equation}
    for some $0\le m\le k-3$.
\end{lemma}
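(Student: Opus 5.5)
The statement to prove is Lemma \ref{260516lemma7_3}. I would argue by contraposition, reusing the kernel construction from the proof of Proposition \ref{260430prop6_7}. Assume $R(s)$ is simultaneously diagonalizable in $s$, so that after a rotation of the parallel frame $E_1,E_2$ we have $R_{12}\equiv 0$. Assume in addition that $R_{11}^{(m)}(0)=R_{22}^{(m)}(0)$ for all $0\le m\le k-3$. Then every $R^{(m)}(0)$ with $m\le k-3$ is a scalar matrix, so
\[
\mathrm{span}\{I,R(0),\dots,R^{(k-3)}(0)\}=\mathrm{span}\{I\}.
\]
The goal is to show that the first $k$ columns of \eqref{260509e2_2} then have rank at most two, which contradicts the hypothesis of rank three.

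\textbf{Step 1: the polynomial terms are scalar.} By \eqref{260505e4_28}, or directly by the recursion \eqref{260506e4_94} started from $\mathcal P_1=\mathcal P_2=0$, each $\mathcal P_i(0)$ with $i\le k$ is a polynomial in $R^{(j)}(0)$ with $j\le i-3\le k-3$. All of these are scalar matrices, so $\mathcal P_i(0)=p_i I$ for some scalar $p_i$. Combining this with the decomposition \eqref{eq:decomposition} at $s=0$ gives
\[
I_i(\epsilon,0)=a_i(0)\sec(\epsilon,0)+\tilde b_i I,\qquad 1\le i\le k .
\]

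\textbf{Step 2: the columns span at most a two-dimensional space.} By Remark \ref{260504remark4_3},
\[
\partial_s^iW(0,\epsilon)=(A_2^T)^{-1}\bigl(a_i\sec+\tilde b_iI\bigr)A_2^{-1}.
\]
Since $W^\sharp(0,\epsilon)=0$, Proposition \ref{260505prop4_6} together with \eqref{aidi_equal} gives
\[
\partial_s^i\det W(0,\epsilon)=a_i(\det A_2)^{-2}.
\]
Hence the $i$-th column of \eqref{260430e6_18} equals $a_i u+\tilde b_i v$, where $u,v\in\mathbb R^4$ are fixed vectors independent of $i$:
\[
u=\bigl((\det A_2)^{-2},\ \text{entries of }(A_2^T)^{-1}\sec A_2^{-1}\bigr),\qquad v=\bigl(0,\ \text{entries of }(A_2^T)^{-1}A_2^{-1}\bigr).
\]
So all $k$ columns lie in $\mathrm{span}\{u,v\}$, and the rank is at most $2$. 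This holds for every $\epsilon$, which gives the desired contradiction.

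\textbf{Main obstacle.} The step needing the most care is Step 1. One must check that $\mathcal P_i$ really involves only $R^{(j)}(0)$ with $j\le i-3$, and not derivatives of the coefficients that could introduce higher derivatives of $R$. The recursion \eqref{260506e4_94} handles this. Differentiating $\mathcal P_k(s)$ raises the derivative order by one while $k$ also increases by one, and the $s$-dependent coefficients $\mathrm{tr}\sec(\epsilon,s)$ and $a_k(s)$ are scalars, so they do not affect scalarity.

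Alternatively, the lemma can be read off from Theorem \ref{260601theorem2_1} and part (3.2)$\Rightarrow$(3.3) by a finer count. The kernel argument of Proposition \ref{260430prop6_7} with span dimension $1$ reduces the kernel equations to the two conditions $\sum_i\tilde b_ic_i=0$ and $\sum_i d_ic_i=0$, so the kernel has dimension at least $k-2$. That is the same conclusion as Step 2.
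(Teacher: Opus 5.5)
Your proposal is correct and is essentially the paper's own proof. The paper proves this lemma by repeating the argument of Proposition~\ref{260430prop6_7} in the case $\dim\mathrm{span}\{I,R(0),\dots,R^{(k-3)}(0)\}=1$, and your Steps~1--2 are exactly that case: you show each $\mathcal P_i(0)$ is scalar via the recursion \eqref{260506e4_94}, then use $a_i=d_i$ and $W^\sharp(0,\epsilon)=0$ to write every column as $a_i u+\tilde b_i v$, so the rank is at most two. Your explicit derivative count in that recursion also shows the argument works for $k=3$, not only for $k\ge 4$ as in Proposition~\ref{260430prop6_7}.
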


The proof of Lemma \ref{260516lemma7_3} is almost identical to that of Proposition \ref{260430prop6_7}, and we will not repeat it here. We will only present the proofs of Lemma \ref{260516lemma7_1} and Lemma \ref{260516lemma7_2}.\\

Let us first prove Lemma \ref{260516lemma7_1}. We argue by contradiction. If $R(s)$ is not simultaneously diagonalizable, then we know from item (3) of Theorem \ref{260509theorem2_1} that the matrix  \eqref{260509e2_2} has rank four for sufficiently small $\epsilon$, which is a contradiction. We therefore from now on assume that $R(s)$ is simultaneously diagonalizable. If we further assume that $R_{11}\equiv R_{22}$, then it is elementary to see that the matrix  \eqref{260509e2_2} has rank two, which is a contradiction. This finishes the proof of Lemma \ref{260516lemma7_1}.\\

Let us prove Lemma \ref{260516lemma7_2}.
We argue by contradiction, and  assume that for some $\epsilon$ small enough, the matrix \eqref{260430e6_18} has rank $\le 2$. Our goal is to prove that 
\begin{equation}
    R_{11}^{(m)}(0)=R_{22}^{(m)}(0), \ \forall 0\le m\le k-3. 
\end{equation}
By the assumption, we can find $c_1(\epsilon), c_2(\epsilon), c_3(\epsilon)$, not all zero, such that 
\begin{equation}\label{260507e5_2}
     c_1 \partial_s^i W_{11}(0, \epsilon) + c_2 \partial_s^i W_{22}(0, \epsilon) + c_3 \partial_s^i \det W(0, \epsilon) = 0. 
\end{equation}
Let $K=K(\epsilon)=\mathrm{diag}(c_1, c_2)$. Then \eqref{260507e5_2} can be written equivalently as 
\begin{equation}
     \operatorname{tr}\big(K \partial_s^i W(0, \epsilon)\big) + c_3 \partial_s^i \det W(0, \epsilon) = 0. 
\end{equation}
We repeat the previous proof (around \eqref{260506e4_83}) and obtain 
\begin{equation}\label{260506e4_83zz}
\mathrm{tr} (Y(\epsilon) I_i(\epsilon, 0))+ \lambda(\epsilon) a_i(0)=0, \ i=1, \dots, k.
\end{equation}
Note that $Y(\epsilon)$ is not only trace-free as before but also a diagonal matrix. This, combined with 
\begin{equation}
\mathrm{tr}(Y(\epsilon) R^{(i)}(0))=0, \ i=0, 1, \dots, k-3,
\end{equation}
implies the desired result. This finishes the proof of Lemma \ref{260516lemma7_2}. \\

Let us also remark here that after proving item (2) and item (3) in Theorem \ref{260509theorem2_1}, item (1) holds automatically.

\section{Proof of Theorem \ref{260601theorem2_1}}

Recall the setup in Subsection~\ref{260601subsection5_1}. For clarity in this
proof, write
\begin{equation}
\mathcal W(s,\epsilon):=\det W(s,\epsilon).
\end{equation}
Thus the first row of the matrix~\eqref{260509e2_2} consists of the derivatives
of \(\mathcal W\), while the last three rows consist of the derivatives of
\(W_{11},W_{12},W_{22}\).

Fix \(\epsilon>0\) sufficiently small. We argue by contradiction. Suppose that
the first row is a linear combination of the second, third and fourth rows. Then
there exist constants \(a,b,c\), depending possibly on \(\epsilon\), such that
\begin{equation}
\partial_s^j\mathcal W(0,\epsilon)
=
a\,\partial_s^jW_{11}(0,\epsilon)
+
2b\,\partial_s^jW_{12}(0,\epsilon)
+
c\,\partial_s^jW_{22}(0,\epsilon)
\end{equation}
for every \(j\geq 1\). Since
\begin{equation}
W(0,\epsilon)=0,
\qquad
\mathcal W(0,\epsilon)=0,
\end{equation}
the same identity also holds for \(j=0\). By analyticity in the \(s\)-variable,
we obtain
\begin{equation}
\mathcal W(s,\epsilon)
=
aW_{11}(s,\epsilon)+2bW_{12}(s,\epsilon)+cW_{22}(s,\epsilon)
\end{equation}
for every \(0\leq s<\epsilon\).\\

Now set
\begin{equation}
D:=
\begin{pmatrix}
c & -b\\
-b & a
\end{pmatrix}.
\end{equation}
Since \(W(s,\epsilon)\) is symmetric, we may write
\begin{equation}
W(s,\epsilon)=
\begin{pmatrix}
W_{11}(s,\epsilon) & W_{12}(s,\epsilon)\\
W_{12}(s,\epsilon) & W_{22}(s,\epsilon)
\end{pmatrix}.
\end{equation}
A direct calculation gives
\begin{multline}
\det\bigl(W(s,\epsilon)-D\bigr)
=
\det W(s,\epsilon)
-
aW_{11}(s,\epsilon)
\\
-
2bW_{12}(s,\epsilon)
-
cW_{22}(s,\epsilon)
+
\det D.
\end{multline}
Using the preceding linear identity, this becomes
\begin{equation}
\det\bigl(W(s,\epsilon)-D\bigr)=\det D
\end{equation}
for every \(0\leq s<\epsilon\). Multiplying by \((\epsilon-s)^2\), we get
\begin{equation}\label{260603e6_9}
\det\bigl((\epsilon-s)W(s,\epsilon)-(\epsilon-s)D\bigr)
=
(\epsilon-s)^2\det D.
\end{equation}
It remains to understand the behavior of \(W(s,\epsilon)\) as \(s\to\epsilon^-\).
We record the required form of Ledger's recursion formula. Put
\begin{equation}
r:=\epsilon-s
\end{equation}
and, for fixed \(s\), define
\begin{equation}
S_s(r):=\sec(s,s+r),
\end{equation}
the second fundamental form that was used in Lemma \ref{260601lemma5_4}. 
\begin{claim}\label{260603claim6_1}
    \(S_s(r)\) satisfies the Riccati equation
\begin{equation}
\partial_r S_s(r)+S_s(r)^2+R(s+r)=0.
\end{equation}
\end{claim}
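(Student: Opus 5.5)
The plan is to read the Riccati equation off the Jacobi-type representation of the second fundamental form in Lemma \ref{260601lemma5_4}. That lemma gives, with $\epsilon$ replaced by $s+r$,
\begin{equation*}
S_s(r)=\sec(s,s+r)=B^{-1}(s,s+r)\,\partial_t B(s,s+r).
\end{equation*}
Here $B(s,t)$ solves the initial value problem \eqref{260504e4_1a}:
\begin{equation*}
\partial_t^2B(s,t)+B(s,t)R(t)=0,\qquad B(s,s)=0,\qquad \partial_tB(s,s)=I.
\end{equation*}
Fix $s$ and view everything as a function of $t=s+r$. Then $\partial_r$ is just $\partial_t$ evaluated at $t=s+r$, and the claim becomes an identity about the logarithmic derivative of a matrix solution of the Jacobi equation.

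First I check that $B(s,s+r)$ is invertible for $0<r$ small, so that $S_s(r)$ is well defined. The initial data give $B(s,s+r)=rI+O(r^2)$, hence $B(s,s+r)$ is invertible for $0<r$ small; this is the range where $\epsilon$ is small in our setting. Smoothness in $r$ then follows from smooth dependence of ODE solutions on parameters.

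Next I differentiate the product $B^{-1}\partial_tB$ in $t$, using $\partial_t(B^{-1})=-B^{-1}(\partial_tB)B^{-1}$:
\begin{equation*}
\partial_r S_s(r)=-B^{-1}(\partial_tB)B^{-1}(\partial_tB)+B^{-1}\partial_t^2B=-S_s(r)^2+B^{-1}\partial_t^2 B.
\end{equation*}
I then substitute the Jacobi equation in the form $\partial_t^2B=-BR(t)$. Because $B$ multiplies $R$ from the left, this gives $B^{-1}\partial_t^2B=-R(s+r)$, and hence $\partial_rS_s(r)+S_s(r)^2+R(s+r)=0$.

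The only point needing care is this ordering of the factors. The equation in \eqref{260504e4_1a} is written as $BR$ rather than $RB$, and the second fundamental form is the left logarithmic derivative $B^{-1}\partial_tB$. These two conventions are matched precisely so that the curvature term cancels cleanly and no conjugation such as $B^{-1}RB$ appears. I do not expect any further obstacle.
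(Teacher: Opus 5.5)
Your proposal is correct and follows the paper's own argument: apply Lemma \ref{260601lemma5_4} with $\epsilon=s+r$, differentiate $B(s,s+r)^{-1}\partial_tB(s,s+r)$ in $r$, and use the Jacobi equation $\partial_t^2B=-BR$ to get $B^{-1}\partial_t^2B=-R(s+r)$. You also supply the details the paper leaves out, namely invertibility of $B(s,s+r)=rI+O(r^2)$ for small $r>0$ and the explicit product-rule computation.
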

\begin{proof}[Proof of Claim \ref{260603claim6_1}]
    This follows by applying Lemma \ref{260601lemma5_4}, differentiating
\begin{equation}
S_s(r)=B(s,s+r)^{-1}\partial_tB(s,s+r)
\end{equation}
and using the Jacobi equation
\begin{equation}
\partial_t^2B(s,t)+B(s,t)R(t)=0.
\end{equation} 
We leave out the rest of the details. 
\end{proof}

Write the Taylor expansion of \(R\) at \(s\) as
\begin{equation}
R(s+r)=\sum_{m=0}^{\infty}\frac{r^m}{m!}R^{(m)}(s).
\end{equation}
Ledger's expansion has the form
\begin{equation}
S_s(r)=\frac{1}{r}I+\sum_{k=0}^{\infty}r^k S_k(s).
\end{equation}
Substituting this expansion into the Riccati equation and comparing powers of
\(r\), we obtain first
\begin{equation}
S_0(s)=0,
\end{equation}
and then, for every \(k\geq 0\),
\begin{equation}
S_{k+1}(s)
=
-\frac{1}{k+3}
\left(
\frac{R^{(k)}(s)}{k!}
+
\sum_{\ell=0}^{k}S_\ell(s)S_{k-\ell}(s)
\right).
\end{equation}
In particular,
\begin{equation}
S_1(s)=-\frac{1}{3}R(s).
\end{equation}
Therefore, as \(r=\epsilon-s\to0^+\),
\begin{equation}
\sec(s,\epsilon)
=
\frac{1}{\epsilon-s}I
-
\frac{\epsilon-s}{3}R(s)
+
O\bigl((\epsilon-s)^2\bigr).
\end{equation}
Since \(\sec(0,\epsilon)\) is fixed as \(s\to\epsilon^-\), and since
\begin{equation}
W(s,\epsilon)=\sec(s,\epsilon)-\sec(0,\epsilon),
\end{equation}
we obtain
\begin{equation}
(\epsilon-s)W(s,\epsilon)
=
I-(\epsilon-s)\sec(0,\epsilon)+O\bigl((\epsilon-s)^2\bigr).
\end{equation}
Consequently,
\begin{equation}
\lim_{s\to\epsilon^-}(\epsilon-s)W(s,\epsilon)=I.
\end{equation}
It follows that
\begin{equation}
\lim_{s\to\epsilon^-}
\det\bigl((\epsilon-s)W(s,\epsilon)-(\epsilon-s)D\bigr)
=
\det I
=
1.
\end{equation}
On the other hand, from the identity \eqref{260603e6_9}, we obtain 
\begin{equation}
\lim_{s\to\epsilon^-}
\det\bigl((\epsilon-s)W(s,\epsilon)-(\epsilon-s)D\bigr)
=
0.
\end{equation}
This contradiction proves that the first row of~\eqref{260509e2_2} is not a
linear combination of the second, third and fourth rows.

\section{Proof of Theorem \ref{existence}}

\subsection{Proof of the first item}\label{algebraic lemma}

We first prove a purely algebraic lemma.

\begin{lemma}\label{lem:algebraic-ricci}
Let \((V,g)\) be a three-dimensional real inner product space. Let \(A\) be a
symmetric \((0,2)\)-tensor on \(V\), and let \(B\) be a \((0,3)\)-tensor on \(V\)
which is symmetric in its last two variables, namely
\begin{equation}
A(X,Y)=A(Y,X), \qquad B(Z,X,Y)=B(Z,Y,X).
\end{equation}
Then there exists an orthonormal frame \((X_0,E_0,Z_0)\) of \(V\) such that
\begin{equation}
A(X_0,E_0)=0, \qquad B(Z_0,X_0,E_0)=0.
\end{equation}
\end{lemma}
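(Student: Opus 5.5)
The plan is to find a unit vector $Z_0$ such that $A$ restricted to $Z_0^\perp$ is a multiple of the metric $g$. Once such a $Z_0$ is found, the condition $A(X_0,E_0)=0$ holds for \emph{every} orthonormal pair $X_0,E_0\in Z_0^\perp$. We may then pick this pair freely to kill the second condition: the bilinear form $(X,Y)\mapsto B(Z_0,X,Y)$ on $Z_0^\perp$ is symmetric because $B$ is symmetric in its last two slots. So we diagonalize it in an orthonormal basis $(X_0,E_0)$ of $Z_0^\perp$, which gives $B(Z_0,X_0,E_0)=0$. Then $(X_0,E_0,Z_0)$ is the required orthonormal frame.

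It remains to produce $Z_0$; this is the classical existence of circular sections of an ellipsoid. By the spectral theorem, choose a $g$-orthonormal eigenbasis $e_1,e_2,e_3$ of $A$ with eigenvalues $a_1\le a_2\le a_3$. If $a_1=a_3$, then $A$ is a multiple of $g$ and any $Z_0$ works. Otherwise we look for a plane containing $e_2$. Set $u_\theta=\cos\theta\, e_1+\sin\theta\, e_3$, so that
\begin{equation*}
A(u_\theta,u_\theta)=a_1\cos^2\theta+a_3\sin^2\theta .
\end{equation*}
This function is continuous in $\theta$, takes the value $a_1$ at $\theta=0$ and $a_3$ at $\theta=\pi/2$. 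By the intermediate value theorem there is a $\theta_0$ with $A(u_{\theta_0},u_{\theta_0})=a_2$.

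Consider the plane $\Pi=\operatorname{span}\{e_2,u_{\theta_0}\}$. On $\Pi$ we have $A(e_2,e_2)=a_2$ and $A(u_{\theta_0},u_{\theta_0})=a_2$. The cross term is $A(e_2,u_{\theta_0})=0$, because $e_2$ is an eigenvector orthogonal to $e_1$ and $e_3$. Hence $A|_\Pi=a_2\,g|_\Pi$. We take $Z_0$ to be a unit normal to $\Pi$, so that $Z_0^\perp=\Pi$.

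The only step that needs care is this existence of a direction $Z_0$ with $A|_{Z_0^\perp}$ umbilic (scalar). A more conceptual reason it must exist is topological. The trace-free part of $A|_{Z^\perp}$ is a section of the bundle of trace-free symmetric forms on $TS^2$. This bundle is isomorphic to the complex square of $TS^2$ and so has Euler number $4\neq 0$, hence every section vanishes somewhere. The explicit eigenvalue/intermediate-value construction above avoids this and is the route I would write up. The rest, including the final diagonalization of $B(Z_0,\cdot,\cdot)$, is routine linear algebra.
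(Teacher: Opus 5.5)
Your proof is correct, and it takes a genuinely different route from the paper's. The paper first assumes that $A$ has distinct eigenvalues. It parametrizes frames by a vector $X$ off the eigenlines via $\widetilde E=X\times SX$ and $\widetilde Z=X\times\widetilde E$, so that $A(X,\widetilde E)=0$ is automatic. It then shows that the degree-six polynomial $F(X)=B(\widetilde Z,X,\widetilde E)$ must vanish somewhere off the eigenlines: assuming $F$ has a fixed sign, positivity of its restrictions and of its top coefficients in each $x_i$, together with $B_{ijk}=B_{ikj}$, forces $F\equiv 0$. Repeated eigenvalues are then handled by approximation and compactness of the frame space. You instead decouple the two conditions. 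Taking $Z_0$ normal to a circular section of $A$ makes $A(X,E)=0$ hold for \emph{every} orthonormal pair in $Z_0^\perp$. This $Z_0$ is a zero of the trace-free part of $A|_{Z^\perp}$, i.e.\ of the quantity $q_A$ the paper itself uses in Subsection~\ref{260606subsec7_2}. The remaining rotational freedom is then spent diagonalizing the symmetric form $B(Z_0,\cdot,\cdot)$. Your route buys brevity and an explicit choice, $\cos^2\theta_0=(a_3-a_2)/(a_3-a_1)$, with no case distinction or limiting argument. It also shows that in Item (1) of Theorem~\ref{existence} the geodesic direction can be chosen from $\operatorname{Ric}_p$ alone, independently of $\nabla\operatorname{Ric}$. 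The paper's parametrization instead probes the generic, expectedly one-dimensional part of the solution set, where $A|_{Z^\perp}$ is not scalar. Your construction lands on special, generically finitely many solutions over the umbilic directions. Your topological aside is also correct, since that bundle has Euler number $\pm 4$, but the intermediate value argument already suffices.
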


\begin{proof}[Proof of Lemma \ref{lem:algebraic-ricci}]
Let \(S:V\to V\) be the self-adjoint endomorphism associated to \(A\), so that
\begin{equation}
A(X,Y)=g(SX,Y).
\end{equation}
We first assume that \(S\) has three distinct eigenvalues. Choose an oriented
orthonormal eigenbasis \((e_1,e_2,e_3)\) of \(S\), and write
\begin{equation}
Se_i=\lambda_i e_i, \qquad \lambda_i\neq \lambda_j \quad \text{if } i\neq j.
\end{equation}
For \(X=x_1e_1+x_2e_2+x_3e_3\), set
\begin{equation}
\mathcal E:=\bigcup_{i=1}^3 \mathbb R e_i.
\end{equation}
If \(X\notin \mathcal E\), then \(SX\) is not colinear with \(X\). Define
\begin{equation}
\widetilde E:=X\times SX, \qquad \widetilde Z:=X\times \widetilde E,
\end{equation}
where \(\times\) denotes the cross product with respect to the chosen oriented
orthonormal basis. Then \(X,\widetilde E,\widetilde Z\) are mutually orthogonal,
and
\begin{equation}
A(X,\widetilde E)=g(SX,\widetilde E)=0.
\end{equation}
Thus, after normalization, \((X,\widetilde E,\widetilde Z)\) gives an
orthonormal frame satisfying the first desired identity. It remains to choose
\(X\notin \mathcal E\) so that
\begin{equation}
B(\widetilde Z,X,\widetilde E)=0.
\end{equation}

We now write this condition explicitly. Up to harmless nonzero normalizing
factors, we have
\begin{equation}
\widetilde E
=
\bigl(
(\lambda_3-\lambda_2)x_2x_3,\,
(\lambda_1-\lambda_3)x_3x_1,\,
(\lambda_2-\lambda_1)x_1x_2
\bigr),
\end{equation}
and
\begin{equation}
\begin{aligned}
\widetilde Z
={}&\bigl(
(\lambda_3-\lambda_1)x_1x_3^2
  +(\lambda_2-\lambda_1)x_1x_2^2,  \\
&\quad
(\lambda_1-\lambda_2)x_2x_1^2
  +(\lambda_3-\lambda_2)x_2x_3^2,  \\
&\quad
(\lambda_2-\lambda_3)x_3x_2^2
  +(\lambda_1-\lambda_3)x_3x_1^2
\bigr).
\end{aligned}
\end{equation}
Define the homogeneous polynomial
\begin{equation}
F(X):=B(\widetilde Z,X,\widetilde E).
\end{equation}
If \(F\) vanishes at some point \(X\notin \mathcal E\), then normalizing
\(X,\widetilde E,\widetilde Z\) gives the desired frame. We therefore suppose,
for contradiction, that \(F\) does not vanish on \(\mathbb R^3\setminus\mathcal E\).
Since \(\mathbb R^3\setminus\mathcal E\) is path connected, \(F\) has a fixed
sign there. Replacing \(B\) by \(-B\) if necessary, we may assume
\begin{equation}
F(X)\geq 0 \qquad \text{for every } X\in \mathbb R^3.
\end{equation}

Write
\begin{equation}
B_{ijk}:=B(e_i,e_j,e_k).
\end{equation}
Thus \(B_{ijk}=B_{ikj}\). Setting \(x_1=0\), we obtain
\begin{equation}
\begin{aligned}
F(0,x_2,x_3)
={}&
(\lambda_3-\lambda_2)^2x_2^2x_3^2  \\
&\quad\cdot
\Bigl(
-B_{321}x_2^2+B_{231}x_3^2
 +(B_{221}-B_{331})x_2x_3
\Bigr).
\end{aligned}
\end{equation}
Since \(F\geq 0\), the quadratic polynomial in parentheses is nonnegative for
all \((x_2,x_3)\). Hence
\begin{equation}
B_{321}\leq 0,\qquad B_{231}\geq 0,\qquad
(B_{221}-B_{331})^2+4B_{231}B_{321}\leq 0.
\end{equation}
On the other hand, if we regard \(F\) as a polynomial in \(x_1\), then its
coefficient of \(x_1^4\) is
\begin{equation}
\begin{aligned}
&-B_{213}(\lambda_1-\lambda_2)^2x_2^2
+B_{312}(\lambda_1-\lambda_3)^2x_3^2  \\
&\qquad
+(B_{212}-B_{313})(\lambda_1-\lambda_3)(\lambda_1-\lambda_2)x_2x_3 .
\end{aligned}
\end{equation}
Again using \(F\geq 0\), this coefficient must be nonnegative for all
\((x_2,x_3)\). Therefore
\begin{equation}
B_{213}\leq 0,\qquad B_{312}\geq 0,\qquad
(B_{212}-B_{313})^2+4B_{213}B_{312}\leq 0.
\end{equation}
Using the symmetry \(B_{ijk}=B_{ikj}\), we have
\begin{equation}
B_{321}=B_{312},\qquad B_{231}=B_{213},\qquad
B_{221}=B_{212},\qquad B_{331}=B_{313}.
\end{equation}
The preceding inequalities therefore imply
\begin{equation}
B_{312}=B_{213}=0,\qquad B_{221}=B_{331}.
\end{equation}
Consequently, both the constant term and the \(x_1^4\)-coefficient of \(F\),
viewed as a polynomial in \(x_1\), vanish. Repeating the same argument after
cyclically permuting the indices \(1,2,3\), we see that the constant term and
the \(x_i^4\)-coefficient vanish when \(F\) is regarded as a polynomial in
\(x_i\), for each \(i=1,2,3\). In particular,
\begin{equation}
B_{ijk}=0
\qquad
\text{whenever } i,j,k \text{ are pairwise distinct}.
\end{equation}

We now finish the argument. Fixing \(x_2,x_3\), the polynomial \(F\), as a
polynomial in \(x_1\), is nonnegative for all \(x_1\), has zero constant term,
and has zero \(x_1^4\)-coefficient. Hence its \(x_1\)- and \(x_1^3\)-coefficients
must vanish. The same argument applies to \(x_2\) and \(x_3\). Therefore the
only possible remaining monomial in \(F\) is a multiple of
\begin{equation}
x_1^2x_2^2x_3^2.
\end{equation}
But the coefficient of this monomial only involves the components \(B_{ijk}\)
with \(i,j,k\) pairwise distinct, and all such components have already been
shown to vanish. Thus
\begin{equation}
F\equiv 0.
\end{equation}
In particular, choosing any \(X\notin\mathcal E\) gives \(F(X)=0\), and hence
gives the desired frame.

It remains to remove the assumption that \(S\) has distinct eigenvalues. Let
\(A_n\) be a sequence of symmetric \((0,2)\)-tensors whose associated
self-adjoint endomorphisms have distinct eigenvalues, and such that
\begin{equation}
A_n\to A.
\end{equation}
By the distinct-eigenvalue case, for each \(n\) there exists an orthonormal
frame \((X_n,E_n,Z_n)\) such that
\begin{equation}
A_n(X_n,E_n)=0,\qquad B(Z_n,X_n,E_n)=0.
\end{equation}
The space of orthonormal frames is compact, so after passing to a subsequence
we may assume
\begin{equation}
(X_n,E_n,Z_n)\to (X_0,E_0,Z_0).
\end{equation}
Passing to the limit gives
\begin{equation}
A(X_0,E_0)=0,\qquad B(Z_0,X_0,E_0)=0.
\end{equation}
This proves the lemma.
\end{proof}

We now prove Theorem~\ref{existence}, Item~(1). We prove the slightly stronger
pointwise statement that, for every \(p\in \mathcal{M}\), there exists a unit speed geodesic
\(\gamma\) with \(\gamma(0)=p\) and a unit vector
\(X(0)\in T_p \mathcal{M}\), \(X(0)\perp \dot\gamma(0)\), such that
\begin{equation}
Y^\perp(0)=0,\qquad \nabla_{\dot\gamma}Y^\perp(0)=0,
\end{equation}
where \(Y(t)=\bar{\operatorname{Ric}}(X(t))\) and \(Y^\perp(t)\) is as in
Definition~\ref{260505defi1_6}. This shows that the chaotic curvature condition
of order \(\leq 1\) fails at \(p\) along the geodesic \(\gamma\).

Fix \(p\in \mathcal{M}\), and apply Lemma~\ref{lem:algebraic-ricci} to \(V=T_p \mathcal{M}\) with
\begin{equation}
A(U,V):=\operatorname{Ric}_p(U,V),
\qquad
B(Z,U,V):=(\nabla_Z\operatorname{Ric})_p(U,V).
\end{equation}
The tensor \(A\) is symmetric, and \(B\) is symmetric in its last two variables
because the Ricci tensor is symmetric. Hence there exists an orthonormal frame
\((X_0,E_0,Z_0)\) of \(T_p \mathcal{M}\) such that
\begin{equation}
\operatorname{Ric}_p(X_0,E_0)=0,
\qquad
(\nabla_{Z_0}\operatorname{Ric})_p(X_0,E_0)=0.
\end{equation}

Let \(\gamma\) be the unit speed geodesic with
\begin{equation}
\gamma(0)=p,\qquad \dot\gamma(0)=Z_0.
\end{equation}
Let \(X(t)\) and \(E(t)\) be the parallel transports of \(X_0\) and \(E_0\)
along \(\gamma\), respectively. Thus
\begin{equation}
X(0)=X_0,\qquad E(0)=E_0,\qquad
\nabla_{\dot\gamma}X=\nabla_{\dot\gamma}E=0.
\end{equation}
By Definition~\ref{260505defi1_6},
\begin{equation}
Y(t)=\bar{\operatorname{Ric}}(X(t)).
\end{equation}
Since the orthogonal complement of
\(\operatorname{span}\{X(t),\dot\gamma(t)\}\) is spanned by \(E(t)\), we have
\begin{equation}
\begin{aligned}
Y^\perp(t)
&=
\big\langle \bar{\operatorname{Ric}}(X(t)),E(t)\big\rangle E(t)  \\
&=
\operatorname{Ric}_{\gamma(t)}(X(t),E(t))E(t).
\end{aligned}
\end{equation}
Therefore
\begin{equation}
Y^\perp(0)=\operatorname{Ric}_p(X_0,E_0)E_0=0.
\end{equation}
Moreover, since \(X(t)\) and \(E(t)\) are parallel along \(\gamma\), we have
\begin{equation}
\nabla_{\dot\gamma}Y^\perp(t)
=
(\nabla_{\dot\gamma(t)}\operatorname{Ric})(X(t),E(t))E(t).
\end{equation}
Evaluating at \(t=0\), we get
\begin{equation}
\nabla_{\dot\gamma}Y^\perp(0)
=
(\nabla_{Z_0}\operatorname{Ric})_p(X_0,E_0)E_0
=
0.
\end{equation}
Thus
\begin{equation}
|Y^\perp(0)|+\left|\nabla_{\dot\gamma}Y^\perp(0)\right|=0,
\end{equation}
so the chaotic curvature condition of order \(\leq 1\) fails at the point \(p\)
along the geodesic \(\gamma\). Since \(p\in \mathcal{M}\) was arbitrary, no
three-dimensional Riemannian manifold satisfies the chaotic curvature condition
of order \(\leq 1\). This proves Item~(1) of Theorem~\ref{existence}.

\subsection{Proof of the second item}\label{260606subsec7_2}

We mainly follow the notation from \cite{DGGZ24}, in particular, the notation in \cite[Subsection 3.1]{DGGZ24}. We will first find a metric satisfying the chaotic curvature condition of order $\le 2$; by compactness and continuity, this immediately implies that every small smooth perturbation of it also satisfies chaotic curvature condition of order $\le 2$. Next we will find another metric which fails the chaotic curvature condition of $\le 2$, and so does every small smooth perturbation of it. \\

Let us work in a small neighborhood of $0\in \R^3$, with coordinates 
\begin{equation}
    (x, y, z)=(x^1, x^2, x^3). 
\end{equation}
Let $g$ be a given metric. 
We use 
\begin{equation}
    e_i= \partial x^i
\end{equation}
to denote one vector in the standard basis for the tangent space. Denote 
\begin{equation}
    g_{ij}:= g(e_i, e_j),
\end{equation}
and 
\begin{equation}
    (g^{kl})_{1\le k, l\le 3}= g^{-1}.
\end{equation}
Moreover, denote 
\begin{equation}
    g_{ij, k}= \partial_k g_{ij}.
\end{equation}
In local coordinates, Christoffel symbols are given by 
\begin{equation}
    \Gamma_{ij}^k = \frac{1}{2} \sum_{m=1}^3 g^{km}(g_{mi,j} + g_{mj,i} - g_{ij,m}),
\end{equation}
and components of the Riemannian curvature tensor are given by 
\begin{equation}
    {R_{lki}}^q = -\left(\partial_k \Gamma_{li}^q - \partial_l \Gamma_{ki}^q - \sum_{p=1}^3 \Gamma_{lp}^q \Gamma_{ki}^p + \sum_{p=1}^3 \Gamma_{kp}^q \Gamma_{li}^p\right).
\end{equation}
Denote 
\begin{equation}
    \mathrm{Ric}_{ij}:= \mathrm{Ric}(e_i, e_j),
\end{equation}
which can be computed by 
\begin{equation}
    \sum_{k=1}^3 
    g(
    \mathcal{R}(e_k, e_i)e_j, e_k
    )=
    \sum_{k=1}^3
    \sum_{l}
    g(R_{kij}^{\ \ \ l} e_l, e_k)= \sum_k R_{kij}^{\ \ \ k}. 
\end{equation}
Next, let us compute the first covariant derivative 
\begin{equation}
    (\nabla_Z \mathrm{Ric})(X, Y), 
\end{equation}
which, by definition, is equal to 
\begin{equation}
    Z(\mathrm{Ric}(X, Y))- 
    \mathrm{Ric}(\nabla_Z X, Y)-
    \mathrm{Ric}(X, \nabla_Z Y). 
\end{equation}
Denote 
\begin{equation}
    \mathrm{Ric}_{ij; k}:= (\nabla_{e_k} \mathrm{Ric})(e_i, e_j).
\end{equation}
In local coordinates, we can compute that 
\begin{equation}
    \mathrm{Ric}_{ij; k}=
    \nabla_k \text{Ric}_{ij} = \partial_k \text{Ric}_{ij} - \sum_{m=1}^3 \Gamma_{ki}^m \text{Ric}_{mj} - \sum_{m=1}^3 \Gamma_{kj}^m \text{Ric}_{im}.
\end{equation}
In the end, we compute the second covariant derivative 
\begin{equation}
    (\nabla_Z \nabla_Z \text{Ric})(X, Y).
\end{equation}
Denote 
\begin{equation}
    \mathrm{Ric}_{ij; kl}:= (\nabla_{e_l}\nabla_{e_k} \mathrm{Ric})(e_i, e_j).
\end{equation}
We can compute that 
\begin{multline}
\mathrm{Ric}_{ij; kl}=\partial_l \text{Ric}_{ij; k} \\
- \sum_{m=1}^3 \Gamma_{lk}^m \text{Ric}_{ij; m} - \sum_{m=1}^3 \Gamma_{li}^m \text{Ric}_{mj; k} - \sum_{m=1}^3 \Gamma_{lj}^m \text{Ric}_{im; k}. 
\end{multline}
So far we have finished writing down the quantities we need to compute. Our goal is to find a metric $g$ such that no matter which orthonormal basis $(X, Y, Z)$ at the origin we pick, the three quantities
\begin{equation}
    \mathrm{Ric}(X, Y), \ (\nabla_Z \mathrm{Ric})(X, Y), \ (\nabla_Z\nabla_Z \mathrm{Ric})(X, Y)
\end{equation}
never vanish at the same time. \\

We will work with the metric 
\begin{equation}
g=g_{11} d x^2+2 g_{12} d x d y+g_{22} d y^2+d z^2
\end{equation}
where 
\begin{equation}
\begin{gathered}
g_{11}=1-y^2+2 x y z-\frac{2}{3} x y^2 z-\frac{2}{9} x z^3+\frac{1}{3} y^4-\frac{3}{5} y^2 z^2-\frac{1}{3} z^4 \\
g_{12}=-\frac{1}{3} z^3-\frac{1}{3} x^2 y z+\frac{4}{5} x y z^2-\frac{1}{3} y^3 z-\frac{1}{9} y z^3 \\
g_{22}=1+z^2.
\end{gathered}
\end{equation}
To simplify our notation, let us denote 
\begin{equation}
    A(X, Y)= \mathrm{Ric}(X, Y),
\end{equation}
\begin{equation}
    B(Z, X, Y)= (\nabla_Z \mathrm{Ric})(X, Y),
\end{equation}
and 
\begin{equation}
    C(W, Z, X, Y)= (\nabla_W \nabla_Z \mathrm{Ric})(X, Y)
\end{equation}
where everything is evaluated at the origin. Moreover, denote 
\begin{equation}
    B_j(X, Y)= B(e_j, X, Y), \ j=1, 2, 3, 
\end{equation}
and 
\begin{equation}
    C_{jk}(X, Y)= C(e_j, e_k, X, Y), \ 1\le j, k\le 3.
\end{equation}
By a direct calculation, we obtain 
\begin{equation}
A=\left(\begin{array}{ccc}
1 & 0 & 0 \\
0 & 0 & 0 \\
0 & 0 & -1
\end{array}\right)
\end{equation}
\begin{equation}
B_1=\left(\begin{array}{ccc}
0 & 0 & 0 \\
0 & 0 & -1 \\
0 & -1 & 0
\end{array}\right), \quad B_2=0, \quad B_3=\left(\begin{array}{ccc}
0 & 1 & 0 \\
1 & 0 & 0 \\
0 & 0 & 0
\end{array}\right),
\end{equation}
and 
\begin{equation}
\begin{gathered}
C_{11}=\left(\begin{array}{ccc}
0 & 0 & -\frac{1}{3} \\
0 & 0 & 0 \\
-\frac{1}{3} & 0 & 0
\end{array}\right), \quad C_{12}=\left(\begin{array}{ccc}
0 & -\frac{4}{5} & 0 \\
-\frac{4}{5} & 0 & \frac{1}{3} \\
0 & \frac{1}{3} & 0
\end{array}\right), \\
C_{13}=\left(\begin{array}{ccc}
\frac{2}{3} & 0 & \frac{4}{5} \\
0 & 0 & 0 \\
\frac{4}{5} & 0 & \frac{2}{3}
\end{array}\right), \\
C_{21}=\left(\begin{array}{ccc}
0 & \frac{1}{5} & 0 \\
\frac{1}{5} & 0 & \frac{1}{3} \\
0 & \frac{1}{3} & 0
\end{array}\right), \quad C_{22}=\left(\begin{array}{ccc}
\frac{6}{5} & 0 & -1 \\
0 & 0 & 0 \\
-1 & 0 & \frac{6}{5}
\end{array}\right), \\
C_{23}=\left(\begin{array}{ccc}
0 & \frac{1}{3} & 0 \\
\frac{1}{3} & 0 & 1 \\
0 & 1 & 0
\end{array}\right), \\
C_{31}=\left(\begin{array}{ccc}
\frac{2}{3} & 0 & \frac{4}{5} \\
0 & 0 & 0 \\
\frac{4}{5} & 0 & \frac{2}{3}
\end{array}\right), \quad C_{32}=\left(\begin{array}{ccc}
0 & \frac{1}{3} & 0 \\
\frac{1}{3} & 0 & 0 \\
0 & 0 & 0
\end{array}\right), \\
C_{33}=\left(\begin{array}{ccc}
\frac{24}{5} & 0 & -\frac{1}{3} \\
0 & \frac{24}{5} & 0 \\
-\frac{1}{3} & 0 & 8
\end{array}\right) .
\end{gathered}
\end{equation}
We will show that no matter which orthonormal basis $(X, Y, Z)$ we pick, the three quantities 
\begin{equation}
    A(X, Y), \ B(Z, X, Y), \ C(Z, Z, X, Y)
\end{equation}
never vanish at the same time. \\

We introduce three auxiliary quantities
\begin{equation}
\begin{gathered}
q_A=A(X, X)-A(Y, Y)+2 i A(X, Y), \\
q_B=B(Z, X, X)-B(Z, Y, Y)+2 i B(Z, X, Y), \\
q_C=C(Z, Z, X, X)-C(Z, Z, Y, Y)+2 i C(Z, Z, X, Y) .
\end{gathered}
\end{equation}
If it happens that 
\begin{equation}
A(X, Y)=B(Z, X, Y)=C(Z, Z, X, Y)=0,
\end{equation}
then the three quantities $q_A, q_B, q_C$ are all real, and the three new quantities 
\begin{equation}
D_0=\operatorname{Im}\left(q_B \overline{q_C}\right), \quad D_1=-\operatorname{Im}\left(q_A \overline{q_C}\right), \quad D_2=\operatorname{Im}\left(q_A \overline{q_B}\right)
\end{equation}
must vanish at the same time. 

\begin{claim}\label{260602claim7_2}
    When we rotate $(X, Y)$ in the space $Z^{\perp}$, the three quantities $D_0, D_1, D_2$ stay unchanged. 
\end{claim}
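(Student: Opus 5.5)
The plan is to observe that each of $q_A,q_B,q_C$ transforms by the same unimodular phase under a rotation of $(X,Y)$ inside $Z^{\perp}$. All three quantities $D_0,D_1,D_2$ are imaginary parts of products of the form $q\overline{q'}$, so this common phase will cancel.

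Fix $Z$, and let $T$ denote any one of the three symmetric bilinear forms on $Z^\perp$:
\begin{equation}
T(U,V)=A(U,V),\quad T(U,V)=B(Z,U,V),\quad T(U,V)=C(Z,Z,U,V).
\end{equation}
Each is symmetric in $(U,V)$, since $\mathrm{Ric}$ is symmetric and covariant derivatives preserve this symmetry. Set
\begin{equation}
q_T(X,Y):=T(X,X)-T(Y,Y)+2iT(X,Y).
\end{equation}
Rotating by angle $\theta$ means replacing $(X,Y)$ by
\begin{equation}
X_\theta=\cos\theta\, X+\sin\theta\, Y,\qquad Y_\theta=-\sin\theta\, X+\cos\theta\, Y.
\end{equation}
Expanding bilinearly and using the double-angle formulas, I will check that
\begin{equation}
T(X_\theta,X_\theta)-T(Y_\theta,Y_\theta)=\cos 2\theta\,\bigl(T(X,X)-T(Y,Y)\bigr)+2\sin 2\theta\, T(X,Y)
\end{equation}
and
\begin{equation}
2T(X_\theta,Y_\theta)=-\sin 2\theta\,\bigl(T(X,X)-T(Y,Y)\bigr)+2\cos2\theta\, T(X,Y).
\end{equation}
Together these give $q_T(X_\theta,Y_\theta)=e^{-2i\theta}q_T(X,Y)$. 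This is the only computation in the argument. It amounts to saying that $q_T$ is the spin-$2$ (trace-free) component of a $2\times2$ symmetric matrix written as a complex number.

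Since the factor $e^{-2i\theta}$ is the same for $T=A$, $B$ and $C$ (it depends only on $\theta$, not on $T$), each product $q_B\overline{q_C}$, $q_A\overline{q_C}$, $q_A\overline{q_B}$ is multiplied by $e^{-2i\theta}\overline{e^{-2i\theta}}=1$. Hence $D_0,D_1,D_2$ are unchanged.

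I would also address orientation-reversing changes of the orthonormal pair, for example swapping $X$ and $Y$ or replacing $Y$ by $-Y$. Under these each $q_T$ is replaced by $-\overline{q_T}$ or $\overline{q_T}$, so every $D_j$ changes by at most a common sign. The claim as stated concerns rotations only, so this is just a remark, and the vanishing of the $D_j$ is unaffected either way. I expect no real obstacle. The only point requiring care is verifying the double-angle identity with the correct sign conventions so that all three phases coincide.
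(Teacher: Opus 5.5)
Your proposal is correct and follows essentially the same route as the paper: the paper also expands $A(X',X')$, $A(Y',Y')$ and $A(X',Y')$ under the rotation, obtains $q_A(X',Y')=e^{-2i\theta}q_A(X,Y)$ (and likewise for $q_B$, $q_C$), and concludes because the common phase cancels in each product $q\overline{q'}$. Your double-angle identities check out, and the remark about orientation-reversing changes (which only flip the common sign of the $D_j$) is a harmless extra.
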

\begin{proof}[Proof of Claim \ref{260602claim7_2}]
We first compute $A$ and $q_A$ under rotation. 
    Let us write the rotation as 
    \begin{equation}
\begin{gathered}
X^{\prime}=(\cos \theta) X+(\sin \theta) Y \\
Y^{\prime}=(-\sin \theta) X+(\cos \theta) Y
\end{gathered}
\end{equation}
To simplify our notation, let us denote 
\begin{equation}
c=\cos \theta, \quad s=\sin \theta
\end{equation}
and 
\begin{equation}
\alpha=A(X, X), \quad \beta=A(X, Y), \quad \gamma=A(Y, Y) .
\end{equation}
By the fact that $A$ is symmetric, we obtain 
\begin{equation}
A\left(X^{\prime}, X^{\prime}\right)=A(c X+s Y, c X+s Y)=c^2 \alpha+2 c s \beta+s^2 \gamma .
\end{equation}
Similarly, we obtain 
\begin{equation}
A\left(Y^{\prime}, Y^{\prime}\right)=A(-s X+c Y,-s X+c Y)=s^2 \alpha-2 c s \beta+c^2 \gamma,
\end{equation}
and 
\begin{equation}
A\left(X^{\prime}, Y^{\prime}\right)=\left(c^2-s^2\right) \beta+c s(\gamma-\alpha) .
\end{equation}
Let us compute $q_A(X', Y')$, and we obtain 
\begin{equation}
\begin{aligned}
q_A(X', Y')=& \left(c^2 \alpha+2 c s \beta+s^2 \gamma\right)-\left(s^2 \alpha-2 c s \beta+c^2 \gamma\right) \\
& \quad+2 i\left(\left(c^2-s^2\right) \beta+c s(\gamma-\alpha)\right)
\end{aligned}
\end{equation}
On the other hand, 
\begin{equation}
q_S(X, Y)=(\alpha-\gamma)+2 i \beta .
\end{equation}
Direct calculation shows that 
\begin{equation}
q_A\left(X^{\prime}, Y^{\prime}\right)=e^{-2 i \theta} q_A(X, Y).
\end{equation}
Similarly, we also obtain 
\begin{equation}
\begin{aligned}
& q_B\left(X^{\prime}, Y^{\prime}\right)=e^{-2 i \theta} q_B(X, Y), \\
& q_C\left(X^{\prime}, Y^{\prime}\right)=e^{-2 i \theta} q_C(X, Y) .
\end{aligned}
\end{equation}
The claim follows directly from the definition of $D_0, D_1, D_2$. 
\end{proof}

Recall that our goal is to show that no matter which orthonormal basis $(X, Y, Z)$ we pick, the three quantities 
\begin{equation}
    D_0, D_1, D_2
\end{equation}
cannot vanish at the same time. Write 
\begin{equation}
    Z=(a, b, c), \ \ a^2+ b^2+ c^2=1.
\end{equation}
Let us first work with the case that 
\begin{equation}
    r=\sqrt{a^2+ c^2}>0.
\end{equation}
By Claim \ref{260602claim7_2}, we can pick one fixed pair $(X, Y)$. Let us pick  
\begin{equation}
X=\left(\frac{c}{r}, 0,-\frac{a}{r}\right), \quad Y=\left(-\frac{a b}{r}, r,-\frac{b c}{r}\right) .
\end{equation}
Direct calculation shows that 
\begin{equation}
    D_0= 4 a c(1+ b^2), \ D_1= 2b(1+ b^2), \ D_2= 2(1+ b^2)(a^2-c^2). 
\end{equation}
It is elementary to see that these three quantities cannot vanish at the same time. \\

In the end, we still need to consider the case $r=0$. Similar calculation shows that 
\begin{equation}
    D_0=0,\  |D_1|=4, \ D_2=0.
\end{equation}
This finishes the proof of all the cases, and thus the proof that the metric we constructed satisfies the chaotic curvature condition of order $\le 2$ at the origin along geodesics pointing in every direction. By continuity and compactness, we see that  the chaotic curvature condition of order $\le 2$  holds for neighboring points of the origin as well, and for small smooth perturbations of the metric as well. \\

Next, we will find a metric which fails the chaotic curvature condition of order $\le 2$, and so does every small smooth perturbation of it. It is not difficult to imagine that there are plenty of these examples. More precisely, if an orthonormal frame $X, Y, Z$ is a ``transverse" solution to 
\begin{equation}
    \mathrm{Ric}(X, Y)=(\nabla_Z \mathrm{Ric})(X, Y)=(\nabla_Z\nabla_Z \mathrm{Ric})(X, Y)=0
\end{equation}
at the origin, then the underlying metric works for our purpose. Here ``transverse" means roughly that implicit function theorems can be applied to find solutions that are small perturbations of the frame $X, Y, Z$. Let us carry out the details. \\

A very clean example is a conformal perturbation of the Euclidean metric. Denote 
\begin{equation}
u(x, y, z)=\frac{1}{2}\left(-x^2+z^2\right)-\frac{1}{2} y z^2+\frac{1}{6} x z^3,
\end{equation}
and consider the metric 
\begin{equation}\label{260602e7_87}
    g= e^{2 u(x, y, z)} (d x^2+ d y^2+ d z^2).
\end{equation}
First of all, one can repeat the calculation in the first half of this subsection, and show that if we take 
\begin{equation}\label{260602e7_88}
    (X, Y, Z)=(e_1, e_2, e_3),
\end{equation}
then \begin{equation}
    \mathrm{Ric}(X, Y)=(\nabla_Z \mathrm{Ric})(X, Y)=(\nabla_Z\nabla_Z \mathrm{Ric})(X, Y)=0
\end{equation}
at the origin. It remains to show that this zero point is ``transverse", whose meaning will become clear later. We perturb the zero point \eqref{260602e7_88}, and consider a new orthogonal frame given by 
\begin{equation}\label{260602e7_90}
    \begin{split}
        & X= e_1+ \varepsilon_1 e_2+ \varepsilon_2 e_3+ O(\varepsilon^2), \\
        & Y= e_2- \varepsilon_1 e_1+ \varepsilon_3 e_3+ O(\varepsilon^2)\\
        & Z= e_3- \varepsilon_2 e_1- \varepsilon_3 e_2+ O(\varepsilon^2),
    \end{split}
\end{equation}
where 
\begin{equation}
    \varepsilon^2:= \varepsilon_1^2+ \varepsilon_2^2+\varepsilon_3^2.
\end{equation}
Define the map 
\begin{equation}
    \mathfrak{F}(\varepsilon_1, \varepsilon_2, \varepsilon_3)= \pnorm{
    A(X, Y), B(Z, X, Y), C(Z, Z, X, Y)
    },
\end{equation}
where $X, Y, Z$ are given by \eqref{260602e7_90}. One can compute directly that 
\begin{equation}
    \mathfrak{F}(\varepsilon_1, \varepsilon_2, \varepsilon_3)=
    \pnorm{
    -\varepsilon_1+ O(\varepsilon^2), 
    \varepsilon_2+ O(\varepsilon^2),
    2\varepsilon_1- \varepsilon_3+ O(\varepsilon^2)
    }.
\end{equation}
Note that 
\begin{equation}
    D \mathfrak{F}(0, 0, 0)=
    \begin{bmatrix}
        -1 & 0 & 0\\
        0 & 1 & 0\\
        2 & 0 & -1
    \end{bmatrix}
\end{equation}
and the determinant of $D\mathfrak{F}$ does not vanish. This is what we meant by that the zero point \eqref{260602e7_88} is transverse. The rest of the proof is immediate: If we consider a small smooth perturbation of the metric \eqref{260602e7_87}, then the new metric fails the chaotic curvature condition of order $\le 2$ by the implicit function theorem. This finishes the proof of the second part of Item (2) in Theorem \ref{existence}.

\subsection{Proof of the third item}

The proof is local, so we work in
a neighborhood of the origin in \(\mathbb R^3\). The main point is that the
failure of chaotic curvature of order \(\leq 3\) at a fixed point is detected by
the \(5\)-jet of the metric at that point. Recall first why \(5\)-jets are enough. The Ricci tensor contains two derivatives
of the metric. Therefore \(\nabla^m\operatorname{Ric}\) at a point is determined
by derivatives of the metric up to order \(m+2\). In particular,
\begin{equation}
\operatorname{Ric},\ \ 
\nabla\operatorname{Ric},\ \ 
\nabla^2\operatorname{Ric},\ \ 
\nabla^3\operatorname{Ric}
\end{equation}
at the origin are determined by the \(5\)-jet of the metric at the origin.\\

Let \(G^5_0\) denote the space of \(5\)-jets at the origin of Riemannian metrics
on \(\mathbb R^3\). Thus \(G^5_0\) is an open subset of a finite-dimensional
vector space. For \(\mathfrak g\in G^5_0\), and for a \(\mathfrak g(0)\)-orthonormal
frame \((X,E,Z)\) of \(T_0\mathbb R^3\), define
\begin{equation}
\mathcal R_m(\mathfrak g,X,E,Z)
:=
(\nabla^m\operatorname{Ric})_{\mathfrak g,0}
(\underbrace{Z,\ldots,Z}_{m},X,E),
\qquad 0\leq m\leq 3.
\end{equation}
For \(m=0\), this means
\begin{equation}
\mathcal R_0(\mathfrak g,X,E,Z)
=
\operatorname{Ric}_{\mathfrak g,0}(X,E).
\end{equation}
Define the bad set \(\Sigma_0\subset G^5_0\) by
\begin{equation}
\begin{aligned}
\Sigma_0
:=
\bigl\{
\mathfrak g\in G^5_0:\,
&\text{there exists a \(\mathfrak g(0)\)-orthonormal frame }
(X,E,Z)  \\
&\text{such that }
\mathcal R_m(\mathfrak g,X,E,Z)=0,\quad 0\leq m\leq 3
\bigr\}.
\end{aligned}
\end{equation}
Thus \(\mathfrak g\notin\Sigma_0\) means that, for every orthonormal frame
\((X,E,Z)\), at least one of
\begin{equation}
\operatorname{Ric}(X,E),\ \ 
(\nabla_Z\operatorname{Ric})(X,E),\ \ 
(\nabla_Z^2\operatorname{Ric})(X,E),\ \ 
(\nabla_Z^3\operatorname{Ric})(X,E)
\end{equation}
is nonzero at the origin.

We claim that \(G^5_0\setminus\Sigma_0\) is open and dense in \(G^5_0\). The
openness is straightforward by continuity and compactness. It remains to prove density. Consider the frame bundle
\begin{equation}
\mathcal B
:=
\bigl\{
(\mathfrak g,X,E,Z):\mathfrak g\in G^5_0,\,
(X,E,Z)\text{ is \(\mathfrak g(0)\)-orthonormal}
\bigr\}.
\end{equation}
If \(N=\dim G^5_0\), then
\begin{equation}
\dim \mathcal B=N+3,
\end{equation}
because the space of orthonormal frames in dimension three has dimension \(3\).
Define
\begin{equation}
\mathcal R:\mathcal B\to\mathbb R^4,
\qquad
\mathcal R:=(\mathcal R_0,\mathcal R_1,\mathcal R_2,\mathcal R_3).
\end{equation}
It is not difficult to see that  \(\mathcal R\) is a submersion, as the entries of $\mathcal R$ involve derivatives of the metric of different degrees. So, by the submersion theorem, or equivalently the implicit function theorem, we obtain that
\begin{equation}
\mathcal Z:=\mathcal R^{-1}(0)
\end{equation}
is a smooth submanifold of \(\mathcal B\) of codimension \(4\). In other words,
\begin{equation}
\dim\mathcal Z=(N+3)-4=N-1.
\end{equation}
The bad set \(\Sigma_0\) is the projection of \(\mathcal Z\) to \(G^5_0\). Since
all the defining relations are algebraic in the finite jet coefficients and in the
frame variables, \(\Sigma_0\) is semialgebraic. Therefore
\begin{equation}
\dim\Sigma_0\leq \dim\mathcal Z=N-1.
\end{equation}
In particular, \(\Sigma_0\) has empty interior in \(G^5_0\). This proves that
\(G^5_0\setminus\Sigma_0\) is dense. Once this is proven, we can use continuity and compactness again to finish the proof of item (3). \\

\noindent {\bf Acknowledgments.} S. G. is partly supported by the Nankai Zhide Foundation, NSFC Grant No. 12426204, and the New Cornerstone Science Foundation.  \\
Part of this work was carried out during Gong's visit to Nankai University; he acknowledges their hospitality with gratitude. S. Dai would like to thank Ruixiang Zhang for the discussion on the chaotic curvature condition.

Part of this work, especially Subsection 7.2, was developed with the assistance of large language models, including Doubao-Seed-2.0-pro, ChatGPT Pro 5.5, and Gemini Ultra 3.1-pro, at different stages of the project. In particular, Doubao-Seed suggested the first correct example for a model problem related to the second item of Theorem 2.7. Subsequently, ChatGPT was used to search for complete examples of the same type. All examples and arguments appearing in Subsection 7.2 were independently checked by the authors, who take full responsibility for their correctness.

\vspace{2cm}

\noindent S.~Dai,
\textsc{Center for Applied Mathematics and KL-AAGDM, Tianjin University, Tianjin, 300072, China}
\par\nopagebreak
  \textit{E-mail address}: \texttt{song.dai@tju.edu.cn}

\noindent L.~Gong, \textsc{Department of Mathematics, The Chinese University of Hong Kong, Hong Kong, China}
  \par\nopagebreak
  \textit{E-mail address}: \texttt{lwgong@math.cuhk.edu.hk}

\noindent S.~Guo, \textsc{Chern Institute of Mathematics, LPMC and New Cornerstone Science Laboratory, Nankai University, Tianjin, 300071, China}
\par\nopagebreak
  \textit{E-mail address}: \texttt{shaomingguo2018@gmail.com}

\end{document}